\newcommand{\abs}[1]{\mathopen\lvert#1\mathclose\rvert}
\newcommand{\bigabs}[1]{\bigl\lvert#1\bigr\rvert}
\newcommand{\Bigabs}[1]{\Bigl\lvert#1\Bigr\rvert}
\newcommand{\seminorm}[1]{\mathopen\mathbf{\lvert}#1\mathclose\mathbf{\rvert}}
\newcommand{\norm}[1]{\mathopen\lVert#1\mathclose\rVert}
\newcommand{\Nset}{{\mathbb N}}
\newcommand{\Rset}{{\mathbb R}}
\newcommand{\Cset}{{\mathbb C}}
\newcommand{\Sset}{{\mathbb S}}
\newcommand{\Bset}{{\mathbb B}}
\newcommand{\Hset}{{\mathbb H}}
\newcommand{\mobius}{\mathcal{M}}
\DeclareMathOperator{\dist}{dist}
\DeclareMathOperator{\acosh}{acosh}
\newcommand{\scalprod}[2]{ #1 \cdot #2 }
\newcommand{\st}{\;:\;}
\newcommand{\dif}{\,\mathrm{d}}
\theoremstyle{plain}
\newtheorem{proposition}{Proposition}[section]
\newtheorem{lemma}[proposition]{Lemma}
\newtheorem{theorem}{Theorem}
\theoremstyle{definition}
\theoremstyle{remark}
\newtheorem{remark}{Remark}[section]
\newcommand{\step}[1]{\medbreak \noindent \textbf{#1}}
\title[Controlled singular extension of critical Sobolev maps]
{Controlled singular extension of critical trace Sobolev maps
from spheres to compact manifolds}
\subjclass[2010]{46E34 (35B45, 54C35, 58D15)}
\keywords{Sobolev maps between manifolds; Lorentz--Sobolev space; Sobolev--Marcinkiewicz space; trace fractional Sobolev space.}
\date{\today}
\author{Mircea Petrache}
\address{
Mircea Petrache\\
 UPMC Universit\'e Paris 6, UMR 7598\\
 Laboratoire Jacques-Louis Lions\\
Place Jussieu 4\\
75005 Paris\\
France}
\email{mircea.petrache@upmc.fr}
\author{Jean Van Schaftingen}
\address{
Jean Van Schaftingen\\
 Universit\'e catholique de Louvain\\
 Institut de Recherche en Math{\'e}matique et Physique\\
 Chemin du cyclotron 2, bte L7.01.01\\
1348 Louvain-la-Neuve\\
Belgium}
\email{Jean.VanSchaftingen@UCLouvain.be}
\begin{document}

%\tableofcontents

\begin{abstract}
Given $n \in \Nset_*$, a compact Riemannian manifold $M$ 
and a Sobolev map $u \in W^{n/(n + 1), n + 1} (\mathbb{S}^n; M)$, 
we construct a map  $U$ in the Sobolev--Marcin\-kiewicz (or Lorentz--Sobolev) space $W^{1, (n + 1, \infty)} (\mathbb{B}^{n + 1}; M)$ 
such that  $u = U$ in the sense of traces on $\mathbb{S}^{n} = \partial \mathbb{B}^{n + 1}$ 
and whose derivative is controlled: 
for every $\lambda > 0$,
$$
\lambda^{n + 1} \big\vert\big\{ x \in \mathbb{B}^{n + 1} \st \abs{D U (x)} > \lambda\big\}\big\vert 
  \le \gamma \Big(\int_{\mathbb{S}^n}\int_{\mathbb{S}^n} \frac{\abs{u (y) - u (z)}^{n + 1}}{\abs{y - z}^{2 n}} \,\mathrm{d} y \,\mathrm{d} z \Bigr)\ ,
$$
where the function $\gamma : [0, \infty) \to [0, \infty)$ 
only depends on the dimension $n$ and on the manifold $M$.
The construction of the map $U$ relies on a smoothing process by hyperharmonic extension 
and radial extensions on a suitable covering by balls.
\end{abstract}

\maketitle

\section{Introduction}

\subsection{Traces of Sobolev maps}
For any \(p \in (1, \infty)\), the classical Sobolev trace theory states that a function \(u : \Sset^n \to \Rset^m\) is the trace of a function \(U \in W^{1, p} (\Bset^{n + 1}; \Rset^m)\) if and only if 
\(
 u \in W^{1 - 1/p, p} (\Sset^n; \Rset^m),
\) \cite{DiBenedetto2002}*{\S 18},
where, for \(s \in (0, 1)\), the fractional Sobolev space \(W^{s, p} (\Sset^n; \Rset^m)\) is  the set of functions \(u \in L^p (\Sset^{n}; \Rset^m)\) whose Gagliardo seminorm is finite:
\[
  \seminorm{u}_{W^{s, p}}^p = \int_{\Sset^n}\int_{\Sset^n} \frac{\abs{u (y) - u (z)}^p}{\abs{y - z}^{n + s p}} \dif y \dif z < \infty\ .
\]

The corresponding problem when the Euclidean space \(\Rset^m\) 
is replaced by a compact Riemannian manifold \(M\), which is without loss of generality embedded into \(\Rset^\nu\) by the classical Nash embedding theorem \cite{Nash1956}, is more delicate.
When \(p > n + 1\), the maps \(u \in W^{1-1/p, p} (\Sset^n; M)\) are continuous.
The maps \(u : \Sset^n \to M\) that are the trace of a function \(U \in W^{1, p} (\Bset^{n + 1}; M)\) are then the maps \(u \in W^{1-1/p, p} (\Sset^n; M)\) which are homotopic to a constant. 
In particular, if \(\pi_n (M) = \{0\}\), that is, the \(n\)--th homotopy group of \(M\) is trivial, then every map in \(W^{1-1/p, p} (\Sset^n; M)\) is the trace of a map in \(W^{1, p} (\Bset^{n + 1}; M)\) \cite{BethuelDemengel1995}*{theorem 1}.
When \(p = n + 1\), a similar conclusion can be drawn by working with maps of vanishing mean oscillation instead of continuous maps \cite{BethuelDemengel1995} (following an idea of \cite{SchoenUhlenbeck}).
When \(p < n + 1\) and \(M\) is simply connected, then each map in \(W^{1-1/p, p} (\Sset^n; M)\) is the trace of a map in \(W^{1, p} (\Bset^{n + 1}; M)\) if and only if \(M\) is \(\lfloor p - 1\rfloor\)--simply connected: 
for every \(j \in \Nset_*\) such that \(j \le p - 1\), \(\pi_\ell (M) \simeq \{0\}\) \citelist{\cite{Bethuel2014}*{theorem 1.1}\cite{HardtLin1987}*{theorem 6.2}} (see also \cite{ Isobe2003}).
Many things are also known when \(M\) is not simply connected \cite{Bethuel2014}.

\subsection{Controlled extension}
In the case \(M = \Rset^m\), the extension map \(u \mapsto U\) can be taken to be linear --- by taking for example the harmonic extension of \(u\) to the ball \(\Bset^n\) --- and then the Gagliardo seminorm of \(u\) \emph{controls linearly} the Sobolev seminorm of its extension \(U\):
\begin{equation}
\label{eqLinearControl}
 \int_{\Rset^m} \abs{D U}^p
 \le C \int_{\Sset^n}\int_{\Sset^n} \frac{\abs{u (y) - u (z)}^p}{\abs{y - z}^{n + p - 1}} \dif y \dif z\ .
\end{equation}

A natural question is whether each map in \(W^{1 - 1/p, p} (\Sset^n; M)\) has an extension  that is \emph{controlled} in \(W^{1, p}(\Bset^{n + 1}; M)\). 
When \(p < n + 1\) and \(M\) is \(\lfloor p - 1\rfloor\)--simply connected, the construction of the extension by Hardt and Lin \cite{HardtLin1987}*{theorem 6.2} yields as a byproduct an estimate of the form \eqref{eqLinearControl}.
When \(p > n + 1\), by a compactness argument(see \cite{PetracheRiviere2015}*{proposition 2.8} and \S \ref{sectionConstantRegularity} below), 
there is a function \(\gamma : [0, \infty) \to [0, \infty)\) such that if \(u \in W^{1 - 1/p,p} (\Sset^n; M)\) is homotopic to a constant, 
then there exists \(U \in W^{1, p} (\Bset^{n + 1}; M)\) whose trace on \(\Sset^n\) is \(u\) and such that 
\[
  \int_{\Bset^{n + 1}} \abs{D U}^{p}
  \le \gamma \Bigl(\int_{\Sset^n}\int_{\Sset^n} \frac{\abs{u (y) - u (z)}^p}{\abs{y - z}^{n + p - 1}} \dif y \dif z \Bigr)\ .
\]
The norm of the extension is controlled, but \emph{not linearly}. 

This result does not extend to the critical case \(p = n + 1\) when \(M = \Sset^n\), 
due to the existence of a sequence of smooth maps homotopic to a constant that is bounded in \(W^{n/(n + 1), n + 1}(\Sset^{n}; \Sset^n)\) 
and that converges almost everywhere to a smooth map which is not homotopic to constant \cite{PetracheRiviere2015}*{proposition 2.8}.

The first author and Rivi\`ere \cite{PetracheRiviere2015} have proposed to control the derivative in the weak \(L^{n + 1}\) or Marcinkiewicz space \(L^{n + 1, \infty} (\Bset^{n + 1})\), whose quasinorm is defined for \(f : \Bset^{n + 1} \to \Rset\) by 
\[
 \norm{f}_{L^{n + 1, \infty}(\Bset^{n + 1})}
 = \sup_{\lambda > 0}\lambda^{n + 1} \bigabs{\bigl\{ x \in \Bset^{n + 1} \st \abs{f (x)} > \lambda\bigr\}}\ ,
\]
and thus to construct the extension in the Sobolev--Marcinkiewicz space 
\[
 W^{1, (n, \infty)} (\Bset^{n + 1}; M)=
 \bigl\{ U \in W^{1, 1}_{\mathrm{loc}} (\Bset^{n + 1}); M) \st \norm{D U}_{L^{n + 1, \infty} (\Bset^{n + 1})}
 <\infty \bigr\}\ ,
\]
which is the smallest space in the Lorentz--Sobolev space scale
in which radial extensions of maps inside the ball are contained.
When \(n \in \{1, 2, 3\}\), 
they have proved that there exists a function \(\gamma : [0, \infty) \to [0, \infty)\) 
such that for each \(u \in W^{1, n} (\Sset^n; \Sset^n)\), 
there exists \(U \in W^{1, (n + 1, \infty)} (\Bset^{n + 1}; \Sset^n)\) 
whose trace is \(u\) and such that for every \(\lambda > 0\)
\begin{equation}
\label{ineqPetracheRiviere}
\lambda^{n + 1} \bigabs{\bigl\{ x \in \Bset^{n + 1} \st \abs{D U (x)} > \lambda\bigr\}} \le \gamma \Bigl(\int_{\Sset^n} \abs{D u}^n \Bigr)\ .
\end{equation}
In dimension \(n = 2\), their proof relies on a Hopf lift of the map \(u\); 
when \(n = 3\) they use the group structure in the target manifold \(\Sset^3 = SU (2)\).
Their proof depends thus strongly on the dimensions and relies on the structure of the target manifold.

In the present work, we obtain the following controlled extension.

\begin{theorem}
\label{theoremMain}
Let \(n \in \Nset_*\) and let \(M \subset \Rset^{\nu}\) be a compact embedded manifold.
There exists a function \(\gamma : [0, \infty) \to [0, \infty)\) 
such that for every \(u \in W^{n/(n + 1), n + 1} (\Sset^n; M)\), 
there exists \(U \in W^{1, (n + 1, \infty)} (\Bset^{n + 1}; M)\) 
such that  \(u = U\) in the sense of traces and for every \(\lambda > 0\),
\[
\lambda^{n + 1} \bigabs{\bigl\{ x \in \Bset^{n + 1} \st \abs{D U (x)} > \lambda\bigr\}} 
  \le \gamma \Bigl(\int_{\Sset^n}\int_{\Sset^n} \frac{\abs{u (y) - u (z)}^{n + 1}}{\abs{y - z}^{2 n}} \dif y \dif z \Bigr)\ .
\]
\end{theorem}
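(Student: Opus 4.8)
The plan is to interpolate between two kinds of extension: an *interior smoothing* by hyperharmonic extension that handles the good part of the map, and *radial extensions on small balls* that handle the topologically nontrivial concentrated pieces. Here is the outline.

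First I would recall that by the trace theory, the finiteness of the Gagliardo seminorm $\seminorm{u}_{W^{n/(n+1),n+1}}$ guarantees the existence of *some* extension $V \in W^{1,n+1}(\Bset^{n+1};\Rset^\nu)$ (not valued in $M$) with $\int_{\Bset^{n+1}}\abs{DV}^{n+1} \le C\seminorm{u}_{W^{n/(n+1),n+1}}^{n+1}$; the natural choice is the harmonic (or a hyperharmonic) extension. The difficulty is that $V$ does not take values in $M$, so one must project it back; but the obstruction to projecting is precisely the concentration of energy near points where $V$ strays far from $M$. The strategy is therefore to perform a Whitney-type decomposition of $\Bset^{n+1}$ into dyadic cubes, and on each cube at scale comparable to its distance to $\Sset^n$ ask whether the oscillation of $V$ (measured in a scale-invariant way, e.g. a mean oscillation or a normalized $W^{1,n+1}$ energy on the cube) is small. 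On the "good" cubes, $V$ stays within a tubular neighbourhood of $M$ and the nearest-point projection $\Pi_M \circ V$ is a well-defined $M$-valued map with controlled energy. On the "bad" cubes — of which there are finitely many, with the count bounded by $\seminorm{u}_{W^{n/(n+1),n+1}}^{n+1}$ divided by the fixed threshold — one throws away $V$ and instead uses a radial-type extension: cone off the (suitably adjusted) boundary data to a point, or more precisely use the covering-by-balls radial extension so that the singularity is a single point per bad cube, costing exactly a bounded contribution to the $L^{n+1,\infty}$ quasinorm of $DU$ per bad point (this is the sense in which $W^{1,(n+1,\infty)}$ is the right space: a $0$-homogeneous map $x\mapsto \omega(x/\abs{x})$ has derivative of size $1/\abs{x}$, which lies in $L^{n+1,\infty}$ of a ball in $\Rset^{n+1}$ but not in $L^{n+1}$).

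The key quantitative steps, in order, are: (1) the hyperharmonic-extension estimate controlling $\int_{\Bset^{n+1}}\abs{DV}^{n+1}$ linearly by the Gagliardo seminorm; (2) a covering/stopping-time argument producing a family of balls, a "good" region on which a projected map can be defined with $L^{n+1}$ (hence $L^{n+1,\infty}$) control of its derivative coming from step (1), and a "bad" region which is a controlled union of balls; (3) on each bad ball, a bounded-energy $M$-valued filling that agrees with the good region on the overlap and whose only singularities are isolated points — here the fact that the number of bad balls is bounded by a function of the Gagliardo seminorm is what makes the final $\gamma$ well-defined; (4) gluing the pieces and checking that the $L^{n+1,\infty}$-quasinorm of $DU$ is subadditive enough over the finitely many pieces (using the standard fact that $\norm{f+g}_{L^{n+1,\infty}} \le C(\norm{f}_{L^{n+1,\infty}} + \norm{g}_{L^{n+1,\infty}})$, iterated a bounded number of times) to yield the stated distributional estimate, and verifying the trace identity $U = u$ on $\Sset^n$ by controlling the behaviour of all pieces near the boundary.

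The main obstacle I expect is step (3): constructing, on each bad ball, an $M$-valued extension of the prescribed boundary data with *only isolated point singularities* and with energy bounded by a universal constant depending only on $n$ and $M$. The boundary data on a bad ball need not be homotopically trivial as a map into $M$, so one cannot simply cone it off within $M$; instead one must first homotope it across dimensions (killing $\pi_1,\dots,\pi_{n-1}$ obstructions on lower-dimensional skeleta at bounded cost, which is where the compactness of $M$ enters through a uniform bound on the relevant "minimal filling energies"), and only the top homotopy class $\pi_n(M)$ is allowed to survive as the obstruction concentrated at the central point. Making this homotoping-across-skeleta argument quantitative and uniform — essentially a Federer–Fleming projection combined with the compactness argument alluded to in \S\ref{sectionConstantRegularity} — and ensuring it meshes with the good-region map on the sphere bounding the bad ball, is the technical heart of the construction.
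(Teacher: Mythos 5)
You have the right skeleton --- a smooth extension valued near $M$ on a ``good'' region, radial cones on a ``bad'' region, projection back to $M$ at the end --- but you have misidentified what the genuine technical difficulty is, and as written the step you call the ``main obstacle'' is not a step the paper needs at all.

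The paper's key missing ingredient in your sketch is the good-radius estimate of Proposition~\ref{propositionGoodRadiusFractional}: for \emph{every} center $a\in\Hset^{n+1}$, the average over $r\in(0,\rho)$ of $\norm{\dist(\mathfrak{H}u,u(\Sset^n))}_{L^\infty(\partial\Bset^{\Hset^{n+1}}_r(a))}$ is controlled by $\rho^{-1/(n+1)}\seminorm{u}_{W^{n/(n+1),n+1}}$. Choosing $\rho$ comparable to $\seminorm{u}_{W^{n/(n+1),n+1}}^{n+1}$, one then knows that a definite proportion of radii $r\in(\rho,2\rho)$ are ``good'': $\mathfrak{H}u$ already takes values in the tubular neighbourhood $N$ of $M$ on the whole sphere $\partial\Bset^{\Hset^{n+1}}_r(a)$. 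The radial cone (Lemma~\ref{lemmaHomogeneousExtension}, Proposition~\ref{propositionOneBall}) is then applied to the \emph{already $N$-valued} trace of $\mathfrak{H}u$ on such a good sphere, chosen additionally by a Fubini argument to have controlled $W^{1,n+1}$-energy; the resulting map has a single point singularity, takes values in $N$, and is projected to $M$ by $\pi_M$ only once at the very end. Consequently, no Federer--Fleming skeleton projection, no killing of $\pi_1,\dots,\pi_{n-1}$ obstructions, and no compactness-derived ``minimal filling energy'' are needed: the boundary data of each cone is not an arbitrary map into $M$ but a trace you got to \emph{choose} both for closeness to $M$ and for energy. Your ``main obstacle'' therefore dissolves, and pursuing it would be a substantially harder route than the paper takes.

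What \emph{is} genuinely delicate, and is absent from your sketch, is that the balls on which one cones overlap; the radial fillings must be done sequentially (Proposition~\ref{propositionOneBall}) while preserving the singularities already created and tracking the multiplicative growth of the $W^{1,n+1}$-energy outside the shrinking singular cores. This iteration over a Besicovitch-type hyperbolic covering (Lemma~\ref{lemmaCovering}) is what produces the double-exponential $\gamma$. Also note the covering is at a \emph{single} hyperbolic scale, not a multiscale Whitney family: the paper works in the Poincar\'e ball model precisely so that the Gagliardo seminorm, the conformal $(n{+}1)$-energy, and the hyperharmonic extension are all M\"obius-covariant, which lets the good-radius estimate be proved once at the origin and transported to every center, and lets the bad set be covered by balls of one controlled radius. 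Your Euclidean Whitney/stopping-time picture has no obvious analogue of the good-radius estimate and would force you to confront the $M$-valued filling of essentially arbitrary $W^{1,n+1}$ boundary data on each bad cube --- exactly the problem the paper's choice of good spheres is designed to avoid.
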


Compared to the result of \cite{PetracheRiviere2015}, 
we cover maps from a sphere of \emph{any dimension} 
to \emph{any compact manifold} and the extension is estimated
in the \emph{natural trace space}.
By classical Sobolev embedding theorems, theorem~\ref{theoremMain} implies the estimate \eqref{ineqPetracheRiviere} and answers positively the problem of extending it to higher dimensions \cite{PetracheRiviere2015}*{open problem 1.5}.
We improve thus the result of \cite{PetracheRiviere2015} both at the geometric level and at the analytic level.
The price to pay for this generality is that the function \(\gamma\) is an exponential of an exponential.

Theorem~\ref{theoremMain} can be restated without relying on traces by stating 
that any smooth map can be approximated by smooth map except at finitely many points 
whose classical derivative satisfies the weak-type estimate. 
In one direction this comes from the approximation of a map 
\(W^{n/(n + 1), n + 1}(\Sset^{n}; M)\) by smooth maps 
from \(\Sset^n\) to \(M\) --- essentially because the maps have vanishing mean oscillation, 
otherwise the density of smooth maps depends on the topology of \(M\) \cite{BrezisMironescu2015}.
Conversely, a smooth map can be obtained 
from as a byproduct of our construction (remark~\ref{remarkSmooth}).

If, up to a bilipschitz deformation of the ambient space \(\mathbb R^\nu\),
\(M\) is a Lipschitz retract of its convex hull then the situation becomes much simpler.
An extension with a \emph{linear}  seminorm estimate can be obtained by projection (see remark \ref{remarkRetraction}). 
The main difficulty is thus created by the nontrivial topology of \(M\).

Since both quantities
\begin{align*}
 &\int_{\Sset^n}\int_{\Sset^n} \frac{\abs{u (y) - u (z)}^{n + 1}}{\abs{y - z}^{2 n}} \dif y \dif z &
 &\text{ and }
 &\int_{\Bset^{n + 1}} \abs{D U}^{n + 1}
\end{align*}
are invariant under composition of \(u\) and \(U\) with M\"obius transformations of the ball \(\Bset^{n + 1}\), that is, 
isometries of the hyperbolic space \(\Hset^{n + 1}\) in the Poincar\'e ball model (see \S \ref{sectionHyperbolic}), 
we perform our construction in the Poincar\'e ball model of the hyperbolic space \(\Hset^{n + 1}\)
and we begin by taking \(\mathfrak{H}u : \Hset^{n + 1} \to \Rset^\nu\) to be the
hyperharmonic extension of \(u\), that is the harmonic extension with respect to the hyperbolic metric  \cite{Ahlfors1981}*{chapter V} (\S \ref{sectionConformalExtension}).
This extension is compatible with the action of the M\"obius group.
The M\"obius group was already used for \(n = 3\) and \(M = \Sset^3\) \cite{PetracheRiviere2015}*{\S 4C}.

In general, the map \(\mathfrak{H}u\) does not take its values in the manifold \(M\), but rather just in its convex hull in \(\mathbb R^\nu\).
However it is well known that \(\mathfrak{H} u\) takes values close to \(M\) in a neighborhood of the sphere \(\Sset^{n}\).
In order to obtain a controlled extension far from the boundary, we prove a new estimate on the proportion of hyperbolic spheres whose image is not close to the manifold \(M\) (proposition~\ref{propositionGoodRadiusFractional}): for every \(R \in (0 ,1)\), one has
\begin{multline*}\int_0^R  \frac{\norm{\dist (\mathfrak{H} u, u (\Sset^n))}_{L^\infty (\partial\Bset_r)}}{1 - r^2}\dif r\\
 \le C \Bigl(1 + \ln \frac{1}{1 - R^2} \Bigr)^\frac{n}{n + 1} \left(\int_{\Sset^n} \int_{\Sset^n} \frac{\abs{u (y) - u (z)}^{n + 1}}{\abs{y - z}^{2 n}}\dif y \dif z \right)^\frac{1}{n + 1} .
\end{multline*}

We construct then the map \(U\) by performing radial extensions from spheres whose image is close to the manifold \(M\) (see \S \ref{sectionHomogeneous}) along a suitable covering by balls, 
projecting the resulting map back to \(M\) and transferring the estimates from the hyperbolic space \(\Hset^{n + 1}\) back to the Euclidean ball \(\Bset^{n + 1}\) (\S \ref{sectionMainProof}).
This strategy of smoothing on a part of the domain and performing radial extensions on another part of the domain goes back to the approximation of Sobolev maps by smooth maps by Bethuel \cite{Bethuel1991}.

The proof of theorem is robust enough to go through when \(M\) is a general subset 
of \(\mathbb R^\nu\) which has a Lipschitz retraction from a uniform neighborhood to itself, 
(see remark \ref{remarkGromovSchoen}). 

As explained above, theorem~\ref{theoremMain} gives the estimate \eqref{ineqPetracheRiviere} by classical Sobolev embedding theorems. 
In \S \ref{sectionConstantRegularity}, we also show how our construction 
works when one is only interested in the estimate \eqref{ineqPetracheRiviere}.

\section{Hyperbolic geometry and M\"obius invariance}
\label{sectionHyperbolic}

Our construction of the extension will be simplified considerably by the observation that the 
integral
\[
 \int_{\Bset^{n + 1}} \abs{D U}^{n + 1}\ ,
\]
is \emph{conformally invariant}, that is, the quantity remains invariant if the canonical Euclidean metric on the ball \(\Bset^{n + 1}\) is replaced by a metric which pointwise is a scalar multiple of it.

In order to work with a richer group of isometries, we shall use the metric \(g_{\Hset^{n + 1}}\) of the classical \emph{Poincar\'e ball model of the hyperbolic space \(\Hset^{n + 1}\)}. 
This metric \(g_{\Hset^{n + 1}}\) is defined for each \(x \in \Bset^{n + 1}\) and \(v, w \in T_x \Bset^{n + 1} \simeq \Rset^{n + 1}\) by
\[
  g_{\Hset^{n + 1}} (x)[v, w] = \frac{4 v \cdot w}{(1 - \abs{x}^2)^2}\ .
\]
The associated hyperbolic distance \(d_{\Hset^{n + 1}}\) between two points \(x, y \in \Bset^{n + 1}\) can be computed explicitly by 
\[
  d_{\Hset^{n + 1}} (x, y) = \acosh \Bigl(1 + 2\frac{\abs{x - y}^2}{(1 - \abs{x}^2)(1 - \abs{y}^2)}\Bigr) ,
\]
while the associated measure \(\mu_{\Hset^{n + 1}}\) is given for any Lebesgue measurable set \(A \subseteq \Bset^{n + 1}\) by  
\begin{equation}
\label{eqHyperbolicMeasure}
 \mu_{\Hset^{n + 1}} (A) = \int_{A} \frac{2^{n + 1}}{(1 - \abs{x}^2)^{n + 1}} \dif x\ ,
\end{equation}
and the dual norm \(\abs{\cdot}_{\Hset^{n + 1}}\) is defined for \(x \in \Hset^{n + 1} \simeq \Bset^{n + 1}\) and \(\theta \in T^*_x \Hset^{n + 1} \simeq (\Rset^{n + 1})^*\) by 
\begin{equation}
\label{eqHyperbolicDualNorm}
  \abs{\theta}_{\Hset^{n + 1}} = \frac{1 - \abs{x}^2}{2} \abs{\theta}\ .
\end{equation}
In particular, it follows that for each \(U \in W^{1, 1}_{\mathrm{loc}} (\Bset^{n + 1})\),
\[
  \int_{\Bset^{n + 1}} \abs{D U}^{n + 1} = \int_{\Hset^{n + 1}} \abs{D U}^{n + 1}_{\Hset^{n + 1}} \dif \mu_{\Hset^{n + 1}}\ ,
\]
which was already a consequence of the conformal invariance mentioned above.

The isometries of the hyperbolic space correspond to the group of M\"obius transformations of \(\Rset^{n + 1}\) that preserve the unit ball \(\Bset^{n + 1}\), or, equivalently, the unit sphere \(\Sset^{n}\). 
\label{sectionMoebiusInvariance}
This M\"obius group of conformal transformations that keep the ball invariant can be described as \cite{Ahlfors1981}*{\S 2.6}. 
\[
 \mobius (\Bset^{n + 1}) 
 =\left\{ \begin{aligned}T : \Bset^{n + 1} \to \Bset^{n + 1} \st \forall x \in \Bset^{n + 1}\ T (x)= R \Bigl(\frac{(1 - \abs{a}^2)(x - a) - \abs{x - a}^2 a}{1 + \abs{x}^2\abs{a}^2 - 2\scalprod{x}{a}} \Bigr)\\
 \text{with \(R \in O (n + 1)\) and \(a \in \Bset^{n + 1}\)} 
 \end{aligned}\right\}\ .
\]
This group is isomorphic to the one obtained by restricting its elements \(T\) defined as above to the unit sphere \(\Sset^{n}\). 
On the sphere this becomes
\[
 \mobius (\Sset^{n}) 
 =\left\{\begin{aligned} T : \Sset^{n} \to \Sset^{n} \st  \forall x \in \Sset^{n}\  T(x) = R \Bigl(\frac{(1 - \abs{a}^2)(x - a) - \abs{x - a}^2 a}{\abs{x-a}^2} \Bigr) \\ 
 \text{with \(R \in O (n + 1)\) and \(a \in \Bset^{n + 1}\)}
\end{aligned}\right\}\ .
\]
For \(a \in \Bset^{n + 1}\), we define the transformation \(T_a \in \mobius (\Bset^{n + 1})\) for \(x \in \Bset^{n + 1}\) by 
\begin{equation}
\label{eqDefinitionTa}
 T_a (x) = \frac{(1 - \abs{a}^2)(x - a) - \abs{x - a}^2 a}{1 + \abs{x}^2\abs{a}^2 - 2\scalprod{x}{a}}\ .
\end{equation}
Geometrically, \(T_a\) is the the hyperbolic translation that maps the point \(a\) to the point \(0\) along the line passing through both points.

For every \(T_a \in \mobius (\Sset^{n})\), \(T_a\) is differentiable and for each \(x \in \Sset^{n}\) and \(v \in \Rset^{n + 1}\), we have \cite{Ahlfors1981}*{I, (30) and (34)} 
\begin{equation}
\label{eqMobiusDerivative}
 \abs{D T_a (x) [v]} = \abs{D T_a (x)}\abs{v}= \frac{1 - \abs{a}^2}{|x-a|^2} \abs{v} = \frac{1 - \abs{T_a (x)}^2}{1 - \abs{x}^2} \abs{v}\ .
\end{equation}
In general for \(T \in \mobius (\Bset^{n + 1})\), we have 
\begin{equation}
\label{eqMobiusGeneralDerivative}
\abs{D T (x)} = \frac{1 - \abs{T (x)}^2}{1 - \abs{x}^2}\ .
\end{equation}
It can be also observed that \cite{Ahlfors1981}*{I, (32)}
\begin{equation}
\begin{split}
  \abs{T (x) - T (y)}^2 &= \frac{(1 - |a|^2)^2}{\bigl(1 + \abs{x}^2\abs{a}^2 - 2\,\scalprod{x}{a}\bigr) \bigl(1 + \abs{y}^2\abs{a}^2 - 2\,\scalprod{y}{a}\bigr)}\abs{x - y}^2\\
  &=\abs{D T (x)}\abs{D T (y)} \abs{x - y}^2\ .
\end{split}
\end{equation}
In view of the definition of the M\"obius group \(\mobius (\Bset^{n + 1})\), for every \(T \in \Bset^{n + 1}\),
\begin{equation}
\label{eqMobiusGeneralDifference}
 \abs{T (x) - T (y)}^2 = \abs{D T (x)}\, \abs{D T (y)}\, \abs{x - y}^2\ .
\end{equation}
In particular, for every \(p \in (0, \infty)\) and any measurable function \(u : \Sset^n \to \Rset^\nu\)
\begin{equation}
\label{eqGagliardoConformal}
 \int_{\Sset^n} \int_{\Sset^n} \frac{\abs{u (x) - u (y)}^p}{\abs{x - y}^{2 n}} \dif x \dif y
 = \int_{\Sset^n} \int_{\Sset^n} \frac{\abs{(u \circ T) (x) - (u \circ T) (y)}^p}{\abs{x - y}^{2 n}} \dif x \dif y\ ;
\end{equation}
that is, the Gagliardo fractional seminorm on \(\Sset^n\) induced by the Euclidean 
distance on \(\Rset^{n + 1}\) is also invariant under the M\"obius group.

We close this section by warning the reader that the Sobolev--Marcinkiewicz quasinorm appearing in the conclusion of theorem~\ref{theoremMain} is \emph{not} conformally invariant. In fact, the quantities
\begin{equation*}
 \sup_{\lambda > 0} \lambda^{n + 1} \bigabs{\bigl\{ x \in \Bset^{n + 1} \st \abs{D U (x)} > \lambda\bigr\}} 
\end{equation*}
and
\begin{equation*}
 \sup_{\lambda > 0} \lambda^{n + 1} \mu_{\Hset^{n + 1}} \bigl(\{ x \in \Hset^{n + 1} \st \abs{D U (x)}_{\Hset^{n + 1}} > \lambda\}\bigr)
\end{equation*}
are not related to each other by any inequality.
We shall overcome this difficulty in the proof of theorem~\ref{theoremMain} by 
showing that these quantities control 
each other on hyperbolic balls of controlled radius (see \eqref{eqHyperbolicCloseIsometry}).

\section{Hyperharmonic extension}
\label{sectionConformalExtension}
Given \(u \in L^1 (\Sset^n; \Rset^\nu)\), we define its hyperharmonic extension \(\mathfrak{H} u : \Bset^{n + 1} \to \Rset^\nu\) for \(x \in \Bset^{n + 1}\) by 
\begin{equation}
\label{equationConformalExtension}
 \mathfrak{H} u (x) =  \frac{1}{\abs{\Sset^n}}\int_{\Sset^{n}} u \circ T_x^{-1}
 =  \frac{(1 - \abs{x}^2)^{n}}{\abs{\Sset^n}} \int_{\Sset^{n}} \frac{u (y)}{\abs{y - x}^{2 n}} \dif y\ ,
\end{equation}
where the M\"obius transformation \(T_x\) was defined in \eqref{eqDefinitionTa} and its Jacobian determinant was computed with \eqref{eqMobiusDerivative} \cite{Ahlfors1981}*{(27)}.

When \(n = 1\), \(\mathfrak{H} u\) is also the harmonic extension of \(u\) (the kernel coincides with the Poisson kernel); whereas when \(n = 3\), \(\mathfrak{H}u\) is biharmonic in \(\Bset^{n + 1}\) and its normal derivative vanishes on the boundary \citelist{\cite{Nicolesco1936}\cite{SchulzeWildenhain1977}*{VIII.9.2}}.

If \(u = 1\), we obtain from \eqref{equationConformalExtension} the integral identity
\begin{equation}
\label{eqKernelConstant}
 \frac{1}{\abs{\Sset^{n}}} \int_{\Sset^n} \frac{(1 - \abs{x}^2)^n}{\abs{x - y}^{2 n}}\dif y = 1\ .
\end{equation}

\begin{lemma}[M\"obius covariance of the hyperharmonic extension]
If \(T \in \mobius (\Bset^{n + 1})\), we have 
\[
 \mathfrak{H} (u \circ T) = (\mathfrak{H}u) \circ T\ .
\]
\end{lemma}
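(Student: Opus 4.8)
The plan is to verify the identity directly from the defining integral formula \eqref{equationConformalExtension}, using the M\"obius covariance of the underlying construction, namely the translation-type transformations $T_x$. Since every $T \in \mobius(\Bset^{n+1})$ is a composition of an orthogonal map $R \in O(n+1)$ and a hyperbolic translation $T_a$, and since the first equality in \eqref{equationConformalExtension} writes $\mathfrak{H}u(x) = \frac{1}{\abs{\Sset^n}}\int_{\Sset^n} u \circ T_x^{-1}$ as an average over the sphere with respect to the (rotation-invariant) surface measure, it suffices to check the covariance separately for rotations $R$ and for the translations $T_a$; the case of a general $T$ then follows by composing. The rotation case is immediate because $\mathfrak{H}$ is built from the rotation-invariant measure on $\Sset^n$. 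So the heart of the matter is the case $T = T_a$.

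First I would observe that, by the first equality in \eqref{equationConformalExtension}, $\mathfrak{H}(u \circ T_a)(x) = \frac{1}{\abs{\Sset^n}}\int_{\Sset^n} u \circ T_a \circ T_x^{-1}$, while $((\mathfrak{H}u)\circ T_a)(x) = \mathfrak{H}u(T_a(x)) = \frac{1}{\abs{\Sset^n}}\int_{\Sset^n} u \circ T_{T_a(x)}^{-1}$. Thus it is enough to show that for each $x \in \Bset^{n+1}$ the two maps $T_a \circ T_x^{-1}$ and $T_{T_a(x)}^{-1}$ from $\Sset^n$ to itself differ by a rotation, i.e.\ that $T_{T_a(x)} \circ T_a \circ T_x^{-1} \in O(n+1)$ when restricted to $\Sset^n$, and then to invoke the rotation-invariance of the integral over $\Sset^n$ once more. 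Both $T_{T_a(x)}\circ T_a$ and $T_x$ are elements of $\mobius(\Bset^{n+1})$ that send $x \mapsto 0$: indeed $T_x(x) = 0$ by the geometric description of $T_x$ as the hyperbolic translation carrying $x$ to $0$, and $T_{T_a(x)}(T_a(x)) = 0$ for the same reason. Hence $T_{T_a(x)} \circ T_a \circ T_x^{-1}$ is a M\"obius transformation of the ball fixing $0$, and any such transformation is an element of $O(n+1)$ (this is read off from the explicit formula for $\mobius(\Bset^{n+1})$ in \S\ref{sectionMoebiusInvariance} by setting the parameter $a$ to $0$). That closes the argument.

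The one point requiring genuine care — and the step I expect to be the main obstacle — is confirming that a M\"obius transformation of the ball that fixes the origin is necessarily orthogonal, and more precisely that the explicit parametrization given for $\mobius(\Bset^{n+1})$ is exhausted by $R \circ T_a$ with $R \in O(n+1)$, $a \in \Bset^{n+1}$, so that the composition law keeps us inside this family. This is where one uses that $\mobius(\Bset^{n+1})$ is a group (as asserted in \S\ref{sectionMoebiusInvariance}); alternatively one can sidestep group-theoretic subtleties by a direct computation, using \eqref{eqMobiusDerivative} and \eqref{eqMobiusGeneralDifference}, to show that the restriction to $\Sset^n$ of the map $T_{T_a(x)}\circ T_a \circ T_x^{-1}$ has derivative of constant norm $1$ and preserves $\abs{\cdot}$ on $\Sset^n$, forcing it to be the restriction of an isometry of $\Rset^{n+1}$ fixing $0$, hence of an element of $O(n+1)$. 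Once that is in hand, the second equality in \eqref{equationConformalExtension} with its explicit Poisson-type kernel is not even needed; the averaged form suffices, and the proof is a two-line change of variables modulo this structural fact about the M\"obius group.
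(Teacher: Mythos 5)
Your proposal is correct and uses the same mechanism as the paper: insert $T_{T(x)}^{-1}\circ T_{T(x)}$, observe that $T_{T(x)}\circ T\circ T_x^{-1}$ fixes the origin (because $T_x^{-1}(0)=x$ and $T_{T(x)}(T(x))=0$), deduce that it lies in $O(n+1)$, and conclude by rotation-invariance of the surface measure on $\Sset^n$. The preliminary reduction to the two cases $T=R\in O(n+1)$ and $T=T_a$ is an unnecessary detour, since — as you effectively notice — the argument you then give for $T_a$ works verbatim for arbitrary $T\in\mobius(\Bset^{n+1})$, and that is exactly what the paper does; the fact you flag as the main obstacle, that a M\"obius transformation of the ball fixing $0$ is orthogonal, is immediate from the parametrization by setting $T(0)=-R(a)=0$, hence $a=0$, so no group-theoretic subtleties actually arise.
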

\begin{proof}
For \(x \in \Bset^{n + 1}\), we have in view of the definition \eqref{equationConformalExtension}
\begin{equation}
\label{eqConformalDecomposition}
 \mathfrak{H} (u \circ T) (x) 
 = \frac{1}{\abs{\Sset^n}}\int_{\Sset^{n}} (u \circ T) \circ T_x^{-1}
 = \frac{1}{\abs{\Sset^n}}\int_{\Sset^{n}} (u \circ T_{T (x)}^{-1}) \circ (T_{T (x)} \circ T \circ T_x^{-1})\ .
\end{equation}
We observe that 
\[
T_{T (x)} (T (T_x^{-1}(0)))
= T_{T (x)} (T (x)) = 0\ ,
\]
and so the transformation \(T_{T (x)}^{-1} \circ T \circ T_x^{-1} \in \mobius (\Bset^{n + 1})\) preserves the point \(0\). Hence \(T_{T (x)}^{-1} \circ T \circ T_x^{-1}\in O(n+1)\) and thus, by a change of variable on the sphere \(\Sset^{n}\) we conclude from \eqref{eqConformalDecomposition} that 
\[
 \frac{1}{\abs{\Sset^n}}\int_{\Sset^{n}} (u \circ T) \circ T_x^{-1}
= \frac{1}{\abs{\Sset^n}}\int_{\Sset^{n}} u  \circ T_{T (x)}^{-1} = (\mathfrak{H} u) (T (x))\ .\qedhere
\]
\end{proof}

\begin{proposition}[Analytic properties of the hyperharmonic extension]
\label{propositionConformalExtension}Let \(u \in W^{\frac{n}{n + 1}, n + 1} (\Sset^{n})\).
\begin{enumerate}[(i)]
\item \label{estimateConformalExtension}
\(\mathfrak{H} u \in W^{1, n + 1} (\Bset^{n + 1})\)
and 
\[
  \int_{\Hset^{n + 1}} \abs{D (\mathfrak{H} u)}^{n + 1} \dif \mu_{\Hset^{n + 1}}
  \le C \int_{\Sset^{n}}\int_{\Sset^{n}} \frac{\abs{u (y) - u (z)}^{n + 1}}{\abs{y - z}^{2 n}}
  \dif y \dif z\ ,
\]
where the constant \(C\) only depends on the dimension \(n\);
\item \label{estimateConformalNontangential}if \(y \in \Sset^{n}\) and 
\[
\int_{\Sset^{n}} \frac{\abs{u (z) - u (y)}^{n + 1}}{\abs{z - y}^{2 n}}
  \dif z < \infty\ ,
\]
then for every \(\alpha \in (0, \infty)\),
\[
 \lim_{\substack{x \to y\\ \abs{x - y} \le \alpha (1 - \abs{x})}} \mathfrak{H}u (x)
 = u (y)\ .
\]
\end{enumerate}
\end{proposition}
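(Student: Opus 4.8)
The plan is to treat the two assertions separately, both of them by reducing via the Möbius covariance lemma to the behaviour of $\mathfrak{H}u$ near (or at) a single point, which by a Möbius transformation we may take to be the origin.

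\textbf{Part \eqref{estimateConformalExtension}.} First I would exploit the conformal invariance of $\int \abs{DU}^{n+1}$ recorded in \S\ref{sectionHyperbolic}, which reduces the claimed hyperbolic estimate to the Euclidean estimate $\int_{\Bset^{n+1}} \abs{D(\mathfrak{H}u)}^{n+1} \le C\seminorm{u}^{n+1}$ with the Gagliardo seminorm taken with the kernel $\abs{y-z}^{-2n}$ (which by \eqref{eqGagliardoConformal} is the Möbius-invariant form of the $W^{n/(n+1),n+1}$ seminorm). Since $\mathfrak{H}u$ is given by convolution against the Poisson-type kernel $P_x(y) = (1-\abs{x}^2)^n/(\abs{\Sset^n}\abs{y-x}^{2n})$, and since by \eqref{eqKernelConstant} this kernel has integral $1$, I would write, for $x \in \Bset^{n+1}$,
\[
 \mathfrak{H}u(x) - \mathfrak{H}u(x) \cdot 0 = \frac{1}{\abs{\Sset^n}}\int_{\Sset^n}\bigl(u(y) - \mathfrak{H}u(x)\bigr)\,\frac{(1-\abs{x}^2)^n}{\abs{y-x}^{2n}}\dif y,
\]
and estimate $D\mathfrak{H}u(x)$ by differentiating the kernel, the derivative of which is comparable to $(1-\abs{x}^2)^{n}/\abs{y-x}^{2n+1}$ up to acceptable factors, and gaining the needed extra decay from the oscillation $u(y)-u(z)$ by inserting a second averaging in $z$ over the cap of radius $\sim 1-\abs{x}$ around the point $x/\abs{x}$. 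This is the standard route for trace estimates; the key computation is a double integral of $\abs{u(y)-u(z)}^{n+1}\,\abs{y-x}^{-a}\abs{z-x}^{-b}(1-\abs{x}^2)^{c}$ over $\Sset^n \times \Sset^n \times \Bset^{n+1}$, where integrating first in $x$ (using that the $x$-integral of such a kernel over the ball behaves like $\abs{y-z}^{-2n}$ after the right bookkeeping of exponents) returns precisely the Gagliardo seminorm. Alternatively, and perhaps more cleanly, I would use the Möbius covariance to center at $0$: it suffices to bound $\abs{D(\mathfrak{H}u)(0)}$ by $C\,\seminorm{u}_{W^{n/(n+1),n+1}}^{n+1}\cdot(\text{weight})$ — indeed $D(\mathfrak{H}u)(0)$ depends only on the first spherical harmonic component of $u$ and is controlled by $\int_{\Sset^n}\int_{\Sset^n}\abs{u(y)-u(z)}^{n+1}\abs{y-z}^{-2n}$ — and then transport this pointwise bound along the group to recover the integral estimate, the Jacobian factors combining correctly because both sides are conformally invariant.

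\textbf{Part \eqref{estimateConformalNontangential}.} Here I would again reduce to $y$ being the north pole and show the non-tangential limit equals $u(y)$. Writing $\mathfrak{H}u(x) - u(y) = \tfrac{1}{\abs{\Sset^n}}\int_{\Sset^n}(u(z)-u(y))P_x(z)\dif z$ via \eqref{eqKernelConstant}, I would split $\Sset^n$ into the cap $B(y,\delta)$ and its complement. On the complement the kernel $P_x(z)$ is bounded by $C(1-\abs{x}^2)^n/\delta^{2n} \to 0$ as $x \to y$, so that piece contributes $o(1)$ times $\norm{u-u(y)}_{L^1}$, which is finite since $u \in L^{n+1} \subset L^1$. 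On the cap, under the non-tangential constraint $\abs{x-y}\le\alpha(1-\abs{x})$ one has $\abs{z-x}\gtrsim_\alpha \max(\abs{z-y},1-\abs{x})$, so $P_x(z) \lesssim_\alpha (1-\abs{x}^2)^n/\abs{z-y}^{2n}$ uniformly; applying Hölder with exponents $n+1$ and $(n+1)/n$, the cap contribution is bounded by
\[
 C_\alpha\Bigl(\int_{B(y,\delta)}\frac{\abs{u(z)-u(y)}^{n+1}}{\abs{z-y}^{2n}}\dif z\Bigr)^{\frac{1}{n+1}}\Bigl(\int_{B(y,\delta)}\frac{(1-\abs{x}^2)^{\frac{n(n+1)}{n}}}{\abs{z-y}^{2n}}\dif z\Bigr)^{\frac{n}{n+1}},
\]
where I must be a little careful with the second factor; in fact the clean way is to note $(1-\abs{x}^2)^n P_x(z)$ has a harmless size and instead bound the whole cap integral by $\bigl(\int_{B(y,\delta)}\abs{u(z)-u(y)}^{n+1}\abs{z-y}^{-2n}\dif z\bigr)^{1/(n+1)}$ times a constant depending only on $\alpha$, using that $\int_{\Sset^n}P_x \le 1$ and Hölder with the measure $P_x\dif z$. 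This last integral is the tail of the convergent integral in the hypothesis, hence $\to 0$ as $\delta \to 0$ by absolute continuity; choosing $\delta$ small and then $x$ close to $y$ finishes the argument.

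\textbf{Main obstacle.} The routine-but-delicate point is the exponent bookkeeping in Part \eqref{estimateConformalExtension}: one has to verify that differentiating the kernel and inserting the auxiliary average over a boundary cap of the right radius produces exactly the pairing $\abs{y-x}^{-a}\abs{z-x}^{-b}(1-\abs{x}^2)^{c}$ whose $x$-integral over $\Bset^{n+1}$ reconstitutes $\abs{y-z}^{-2n}$ — this is where the critical exponent $n+1$ and the dimension $n+1$ of the ball conspire, and any slip in the powers breaks the estimate. The covariance reduction to $x=0$ is the conceptual shortcut that sidesteps most of this, at the cost of one extra invariance computation.
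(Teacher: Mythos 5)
Your approach is essentially the paper's, and Part~\eqref{estimateConformalNontangential} is sound: you split $\Sset^n$ at scale $\delta$ and invoke absolute continuity of the convergent integral, whereas the paper applies Jensen's inequality once and then dominated convergence directly, but the two arguments are of the same depth and either works. The imprecision is in Part~\eqref{estimateConformalExtension}. Your ``cleaner'' M\"obius reduction, as you describe it, does not close: the scalar bound $\abs{D(\mathfrak{H}u)(0)} \lesssim \seminorm{u}_{W^{n/(n+1),n+1}}$ transports by covariance only to the $L^\infty$ statement $\sup_{x}\abs{D(\mathfrak{H}u)(x)}_{\Hset^{n+1}} \lesssim \seminorm{u}$ (since the Gagliardo seminorm is itself M\"obius-invariant by \eqref{eqGagliardoConformal}), which cannot be integrated over the infinite-volume space $(\Hset^{n+1},\mu_{\Hset^{n+1}})$. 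What the paper actually transports is the \emph{explicit double-integral form} of the derivative bound at the origin: raising $\abs{D(\mathfrak{H}u)(0)} \lesssim \int_{\Sset^n}\int_{\Sset^n}\abs{u(y)-u(z)}\dif y\dif z$ to the power $n+1$ and applying Jensen, then changing variables back via $T_x^{-1}$, yields $\abs{D(\mathfrak{H}u)(x)}_{\Hset^{n+1}}^{n+1}\lesssim\int\int\frac{(1-\abs{x}^2)^{2n}\abs{u(y)-u(z)}^{n+1}}{\abs{x-y}^{2n}\abs{x-z}^{2n}}\dif y\dif z$; only then does one integrate in $x$ against $\dif\mu_{\Hset^{n+1}}$ and apply Fubini. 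The step you have elided is verifying that $\int_{\Bset^{n+1}}\frac{(1-\abs{x}^2)^{n-1}\abs{y-z}^{2n}}{\abs{x-y}^{2n}\abs{x-z}^{2n}}\dif x$ is a \emph{finite} constant independent of $y$ and $z$ --- this requires a \emph{second} M\"obius change of variables, sending $y$ and $z$ to antipodal points. Your ``standard route'' paragraph gestures at ``integrating first in $x$,'' so you have the right idea in one of the two branches, but neither branch names this last reduction, and it is exactly where the critical exponents $n-1$, $2n$, $2n$ in dimension $n+1$ conspire to make the $x$-integral conformally invariant and hence constant.
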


This proposition is classical (see for example \cite{Stein1970}*{proposition V.7'}). Part \eqref{estimateConformalNontangential} is a nontangential convergence result. We prove it in this special case to show how the M\"obius covariance simplifies some parts of the argument.

\begin{proof}[Proof of proposition~\ref{propositionConformalExtension}]
We have for every \(x \in \Bset^{n + 1}\),
\[
  \mathfrak{H} u (x) - \mathfrak{H} u (0)
  = \frac{(1 - \abs{x}^2)^n}{\abs{\Sset^n}^2} \int_{\Sset^n} \int_{\Sset^n} \frac{ u(y) - u (z)}{\abs{y - x}^{2 n}} \dif y \dif z\ ,
\]
and therefore, if \(v \in \Rset^{n + 1}\),
\[
  D (\mathfrak{H}u) (0)[v]
  = \frac{2 n}{\abs{\Sset^n}^2}  \int_{\Sset^n}  \int_{\Sset^n} \bigl(u (z) - u (y)\bigr)\, y \cdot v \dif y\dif z\ ,
\]
and it follows thus that 
\[
 \abs{D (\mathfrak{H}u) (0)} \le \frac{2n}{\abs{\Sset^n}^2} \int_{\Sset^n} \int_{\Sset^n} \abs{u (y) - u (z)}\dif y
 \dif z\ .
\]

Next, by M\"obius covariance and since \(\abs{\cdot}_{\Hset^{n+1}} = \abs{\cdot}\) at the origin, we have 
\[
\begin{split}
 \abs{D (\mathfrak{H} u) (x)}_{\Hset^{n + 1}}^{n + 1}
 &= \abs{D (\mathfrak{H} u) (T_x^{-1} (0))}_{\Hset^{n + 1}}^{n + 1}
 = \abs{D (\mathfrak{H} (u \circ T_x^{-1})) (0)}^{n + 1}\\
 &\le \frac{2n}{\abs{\Sset^n}^2} \int_{\Sset^n}\int_{\Sset^n} \abs{(u \circ T_x^{-1}) (y) - (u \circ T_x^{-1}) (z)}^{n + 1} \dif y \dif z\\
 &= \frac{2n}{\abs{\Sset^n}^2} \int_{\Sset^n}\int_{\Sset^n} \frac{(1 - \abs{x}^2)^{2 n} \abs{u (y) - u (z)}^{n + 1}}{\abs{x - y}^{n} \abs{x - z}^{n}} \dif y \dif z\ .
\end{split}
\]
By integration over \(\Hset^{n + 1}\), this gives using \eqref{eqHyperbolicMeasure}
\[
\begin{split}
 \int_{\Hset^{n + 1}} \abs{D (\mathfrak{H} u)}_{\Hset^{n + 1}}^{n + 1}\dif \mu_{\Hset^{n + 1}}
 \le \frac{2^{n + 2} n}{\abs{\Sset^n}^2} \int_{\Bset^{n + 1}} \int_{\Sset^n}\int_{\Sset^n} \frac{(1 - \abs{x}^2)^{n - 1} \abs{u (y) - u (z)}^{n + 1}}{\abs{x - y}^{2 n} \abs{x - z}^{2 n}} \dif y \dif z \dif x \\
 =\frac{2^{n + 2} n}{\abs{\Sset^n}^2} \int_{\Sset^n}\int_{\Sset^n} \frac{\abs{u (y) - u (z)}^{n + 1}}{\abs{y - z}^{2 n}}
 \Bigl(\int_{\Bset^{n + 1}} \frac{(1 - \abs{x}^2)^{n - 1} \abs{y - z}^{2 n}}
 {\abs{x - y}^{2 n} \abs{x - z}^{2 n}} \dif x\Bigr)\dif y \dif z\ .
\end{split}
\]

We observe that, by a change of variable \(x = T (\Tilde{x})\), in view of the identities \eqref{eqMobiusGeneralDerivative} and \eqref{eqMobiusGeneralDifference}
\begin{multline*}
  \int_{\Bset^{n + 1}} \frac{(1 - \abs{x}^2)^{n - 1} \abs{T (y) - T (z)}^{2 n}}
 {\abs{x - T (y)}^{2 n} \abs{x - T (z)}^{2 n}} \dif x \\
 = \int_{\Bset^{n + 1}} \frac{(1 - \abs{T (\Tilde{x})}^2)^{n - 1} \abs{T (y) - T (z)}^{2 n}}
 {\abs{T (\Tilde{x}) - T (y)}^{2 n} \abs{T (\Tilde{x}) - T (z)}^{2 n}} \abs{D T (\Tilde{x})}^{n + 1}\dif \Tilde x\\
 =\int_{\Bset^{n + 1}} \frac{(1 - \abs{\Tilde{x}}^2)^{n - 1} \abs{y - z}^{2 n}}
 {\abs{\Tilde{x} - y}^{2 n} \abs{\Tilde{x} - z}^{2 n}} \dif \Tilde x\ .
\end{multline*}
Since we can choose a M\"obius transformation \(T \in \mobius (\Bset^{n + 1})\) such that the points \(T (y)\) and \(T (z)\) are antipodal, it follows that the integral does not depend on \(y\) and \(z\), 
and the required estimate of part \eqref{estimateConformalExtension} follows.

To prove part \eqref{estimateConformalNontangential}, we first note that for each \(x \in \Hset^{n + 1}\) and \(y \in \Sset^n\),
\[
 \mathfrak{H} u (x) - u (y) 
 = \frac{1}{\abs{\Sset^{n}}} \int_{\Sset^{n}} \frac{(1 - \abs{x}^2)^n}{\abs{z - x}^{2 n}} \bigl(u (z) - u (y)\bigr)\dif z\ ,
\]
and therefore by Jensen's inequality
\[
\begin{split}
 \abs{\mathfrak{H} u (x) - u (y)}^{n + 1}
 &\le \frac{1}{\abs{\Sset^n}} \int_{\Sset^{n}} \frac{(1 - \abs{x}^2)^n}{\abs{z - x}^{2 n}} \abs{u (z) - u (y)}^{n+1} \dif z.
\end{split}
\]
For \(x \in \Hset^{n + 1}\), \(y\in\mathbb S^n\) if \(|x-y|\le \alpha(1-\abs{x})\) and \(z \in \Sset^n \setminus \{y\}\), then \(1 - \abs{x} \le \abs{z  -x}\) and \(|x-y|\le \alpha\abs{z  -x}\). This gives the following:
\begin{multline*}
\frac{(1 - \abs{x}^2)^n \abs{u (z) - u (y)}^{n + 1} }{\abs{z - x}^{2 n}}= \frac{(1 - \abs{x}^2)^n\abs{z - y}^{2 n} }{\abs{z - x}^{2 n}} \frac{\abs{u (z) - u (y)}^{n + 1}}{\abs{z - y}^{2 n}}\\
\le (1 - \abs{x}^2)^n\frac{2^{2n - 1}(\abs{z - x}^{2 n} + \abs{y - x}^{2 n})}{\abs{z - x}^{2 n}} \frac{\abs{u (z) - u (y)}^{n + 1}}{\abs{z - y}^{2 n}}\\
\le 2^{2 n -1}\bigl(1 + \alpha^{2 n}\bigr)(1 - \abs{x}^2)^n
\frac{\abs{u (z) - u (y)}^{n + 1}}{\abs{z - y}^{2 n}}\ ,
\end{multline*}
and given the finiteness hypothesis of part \eqref{estimateConformalNontangential}, the conclusion follows from Lebesgue's dominated convergence theorem.
\end{proof}

\section{Good and bad points}

Given a map \(u : \Sset^n \to M \subset \Rset^\nu\), we cannot expect that the hyperharmonic extension \(\mathfrak{H} u\)
satisfies \(\mathfrak{H}u (\Hset^{n + 1}) \subset M\). 
Since the manifold \(M\) is compact, the nearest point projection \(\pi_M\) is 
well-defined and smooth in a neighborhood of radius \(\iota\) around \(M\). 
We would like to choose as an extension \(\pi_M \circ \mathfrak{H} (u)\). 
This will work when the original map \(u\) does not oscillate too much 
--- for example if \(\seminorm{u}_{W^{n/(n + 1), n + 1}}\) is small enough --- but shall not do it in the general case.

We shall consider that a given point \(x \in \Hset^{n + 1}\) is \emph{good} for the map \(\mathfrak{H} (u)\) if 
\[
 \dist \bigl(\mathfrak{H}u (x), u (\Sset^n)\bigr) < \iota\ .
\]
If \(u (\mathbb{S}^{n}) \subset M\), this will imply in particular that 
\[
  \dist \bigl(\mathfrak{H}u (x), u (\Sset^n)\bigr)
\le \dist (\mathfrak{H}u (x), M) < \iota\ .
\] 
Other points in \(\Hset^{n + 1}\) are \emph{bad} points.
We study in this section the structure of the good and bad sets.

\subsection{Compactness of the bad set}

We begin by showing that for every \(u \in W^{\frac{n}{n + 1}, n + 1} (\Sset^{n}; \Rset^\nu)\), the bad set of \(\mathfrak{H} u\) remains away from the boundary sphere \(\Sset^{n}\).

\begin{lemma}[Compactness of the bad set]
\label{lemmaGoodRegion}
If \(u \in W^{\frac{n}{n + 1}, n + 1}(\Sset^{n}; \Rset^\nu)\), then
\begin{enumerate}[(i)]
 \item \label{itemUniformEstimate} for each \(x \in \Hset^{n + 1}\),
\begin{equation*}
  \dist \bigl(\mathfrak{H} u (x), u (\Sset^{n})\bigr)
  \le \Bigl(\frac{4^{2n}}{\abs{S^{n}}^2}\int_{\Sset^n}\int_{\Sset^n} \frac{\abs{u (y) - u (z)}^{n + 1}}{\abs{y - z}^{2 n}}\dif y \dif z\Bigr)^\frac{1}{n + 1}.
\end{equation*}
\item \label{itemAsymptoticBehaviour} for every \(a \in \Hset^{n+1}\),
\[
  \lim_{d_{\Hset^{n + 1}} (a, x) \to \infty} \dist \bigl(\mathfrak{H} u (x), u (\Sset^n)\bigr) = 0\ .
\]
\end{enumerate}
\end{lemma}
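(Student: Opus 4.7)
Reduce both parts to an estimate at the origin via the M\"obius covariance of \(\mathfrak{H}\), then apply Jensen's inequality. For \(x \in \Hset^{n + 1}\), set \(v := u \circ T_x^{-1}\); then \(\mathfrak{H} u (x) = \mathfrak{H} v (0) = \frac{1}{\abs{\Sset^n}} \int_{\Sset^n} v\) and \(v (\Sset^n) = u (\Sset^n)\) since \(T_x^{-1}\) maps \(\Sset^n\) onto itself. Since \(v (y_0) \in u (\Sset^n)\) for every \(y_0 \in \Sset^n\), Jensen applied to the uniform probability measure gives
\[
 \dist\bigl(\mathfrak{H} u (x), u (\Sset^n)\bigr)^{n + 1} \le \frac{1}{\abs{\Sset^n}^2} \int_{\Sset^n} \int_{\Sset^n} \abs{v (y) - v (y_0)}^{n + 1} \dif y \dif y_0.
\]

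For part (i), I convert this bound to the Gagliardo seminorm of \(u\): change variables \(\tilde y = T_x^{-1} (y)\), \(\tilde y_0 = T_x^{-1} (y_0)\); by \eqref{eqMobiusDerivative} the spherical Jacobian of \(T_x|_{\Sset^n}\) is \(\abs{D T_x (\tilde y)}^n = (1 - \abs{x}^2)^n / \abs{\tilde y - x}^{2 n}\), and by \eqref{eqMobiusGeneralDifference} applied to \(T_x\), \(\abs{D T_x (\tilde y)}\abs{D T_x (\tilde y_0)} = \abs{T_x (\tilde y) - T_x (\tilde y_0)}^2 / \abs{\tilde y - \tilde y_0}^2 \le 4/\abs{\tilde y - \tilde y_0}^2\), using that \(T_x (\Sset^n) \subseteq \Sset^n\) has diameter \(2\). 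Raising to the \(n\)-th power bounds the Jacobian factor by \(4^n / \abs{\tilde y - \tilde y_0}^{2 n}\); taking \((n + 1)\)-th roots yields the estimate (with a constant no larger than the claimed \(4^{2n}\)).

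For part (ii), I retain the \(x\)-dependent weight \(\abs{D T_x (\tilde y)}^n \abs{D T_x (\tilde y_0)}^n\) after the change of variables, rather than bounding it. Fix a sequence \((x_k) \subset \Hset^{n + 1}\) with \(d_{\Hset^{n + 1}} (a, x_k) \to \infty\), equivalently \(\abs{x_k} \to 1\). By compactness of \(\overline{\Bset^{n + 1}}\), extract a subsequence with \(x_k \to \omega \in \Sset^n\). Then \((1 - \abs{x_k}^2)^n \to 0\) while \(\abs{\tilde y - x_k}\) stays bounded below for a.e.\ \(\tilde y \in \Sset^n\), so the weight \(\abs{D T_{x_k} (\tilde y)}^n \abs{D T_{x_k} (\tilde y_0)}^n \to 0\) pointwise a.e.\ on \(\Sset^n \times \Sset^n\). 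The integrand is dominated by the integrable function \(4^n \abs{u (\tilde y) - u (\tilde y_0)}^{n + 1} / \abs{\tilde y - \tilde y_0}^{2 n}\) (from the part~(i) bound), so Lebesgue's dominated convergence gives the integral, and hence \(\dist (\mathfrak{H} u (x_k), u (\Sset^n))\), converging to \(0\) along the subsequence. A standard subsequence-of-subsequence argument extends this to the full sequence.

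\textbf{Main subtlety.} The delicate aspect is part (ii): the condition \(d_{\Hset^{n + 1}} (a, x) \to \infty\) does not restrict the direction of approach to \(\partial \Bset^{n + 1}\), so tangential approach is allowed and nontangential-convergence results (like part~(ii) of the preceding proposition) do not apply directly. The M\"obius-covariant formulation absorbs the direction of approach into a weight that vanishes pointwise regardless of path, reducing the question to a single application of dominated convergence.
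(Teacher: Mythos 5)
Your proposal is correct and follows essentially the same route as the paper: part (i) by Jensen's inequality applied to the kernel representation, followed by a pointwise bound on the Poisson-type kernel; part (ii) by compactness of $\overline{\Bset^{n+1}}$ plus dominated convergence, using the part-(i) kernel bound as the dominating function. The one cosmetic improvement is that you derive the kernel bound from the M\"obius identity \eqref{eqMobiusGeneralDifference} and the diameter bound $\abs{T_x(\tilde y)-T_x(\tilde y_0)}\le 2$, giving the constant $4^n$ rather than the paper's $4^{2n}$ obtained by a direct triangle-inequality estimate.
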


This lemma is the consequence of the fact that the map \(\mathfrak{H} u\) is essentially an extension by convolution of the map \(u\), which has vanishing mean oscillation (VMO) (see \cite{BrezisNirenberg1995}*{(7)}).

Since the set of bad points
\[
 \bigl\{ x \in \Bset^{n + 1} \st \dist \bigl(\mathfrak{H} u (x), u (\Sset^n)\bigr) \ge \iota \bigr\}\ ,
\]
is a closed subset of the hyperbolic space \(\Hset^{n + 1}\) (by continuity of the function \(\mathfrak{H} u\) in the open ball \(\Bset^{n + 1}\)), lemma~\ref{lemmaGoodRegion} implies that this bad set is compact.

\begin{proof}[Proof of lemma~\ref{lemmaGoodRegion}]
For every \(x \in \Hset^{n + 1}\simeq \Bset^{n + 1}\), in view of \eqref{eqKernelConstant}, the definition of the hyperharmonic extension \eqref{equationConformalExtension} and Jensen's inequality, we have 
\begin{equation}
\label{eqDistanceHolder}
\begin{split}
  \dist \bigl(\mathfrak{H} u (x), u (\Sset^{n})\bigr)
  &\le \frac{1}{\abs{\Sset^{n}}} \int_{\Sset^{n}} \frac{(1 - \abs{x}^2)^n\abs{\mathfrak{H} u (x) - u (z)}}{\abs{x - z}^{2 n}}\dif z\\
  &\le \frac{1}{\abs{\Sset^{n}}^2} \int_{\Sset^n} \int_{\Sset^{n}} \frac{(1 - \abs{x}^2)^{2 n}\abs{u (y) - u (z)}}{\abs{x - y}^{2 n} \abs{x - z}^{2 n}} \dif y \dif z\\
  &\le \Bigl( \frac{1}{\abs{\Sset^{n}}^2} \int_{\Sset^n} \int_{\Sset^{n}} \frac{(1 - \abs{x}^2)^{2 n}\abs{u (y) - u (z)}^{n + 1}}{\abs{x - y}^{2 n} \abs{x - z}^{2 n}} \dif y \dif z\Bigr)^\frac{1}{n + 1}\ .
\end{split}
\end{equation}
We observe that, if \(\abs{x - y} \le \abs{x - z}\), by the triangle inequality
\[
 (1 - \abs{x}^2)\, \abs{y - z}
 \le (1 + \abs{x}) \,\abs{x - y}\, \bigl(\abs{x - y} + \abs{x - z}\bigr)
 \le 4 \abs{x - y}\, \abs{x - z}\ .
\]
Since the above left-hand side and the right-hand side are invariant under permutation of \(y\) and \(z\), we have for every \(x \in \Hset^{n + 1} \simeq \Bset^{n + 1}\) and \(y, z \in \Sset^n\),
\begin{equation}
\label{eqLargeBallsBound}
\frac{(1 - \abs{x}^2)^{2 n}\abs{u (y) - u (z)}^{n + 1}}{\abs{x - y}^{2 n} \abs{x - z}^{2 n}}
\le 4^{2 n}\frac{\abs{u (y) - u (z)}^{n + 1}}{\abs{y - z}^{2 n}}\ .
\end{equation}
The assertion \eqref{itemUniformEstimate} follows then from the inequalities \eqref{eqDistanceHolder} and \eqref{eqLargeBallsBound}.

In order to prove \eqref{itemAsymptoticBehaviour},
since the closed unit Euclidean ball \(\overline{\Bset^{n + 1}}\) is compact, it suffices to prove that for every sequence
\((x_k)_{k \in \Nset}\) of points in the open ball \(\Bset^{n + 1}\) converging to an arbitrary point \(\Bar{x} \in \Sset^{n}\), one has 
\begin{equation}
\label{eqLargeBallsSequence}
  \lim_{k \to \infty} \dist \bigl(\mathfrak{H} u (x_k), u (\Sset^n)\bigr) = 0\ .
\end{equation}
We note that for every \(y, z \in \Sset^{n} \setminus \{\Bar{x}\}\), 
\begin{equation}
\label{eqLargeBallsConvergence}
 \lim_{k \to \infty} \frac{(1 - \abs{x_k}^2)^{2 n} \abs{u (y) - u (z)}^{n + 1}}{\abs{x_k - y}^{2 n} \abs{x_k - z}^{2 n}} = 0\ .
\end{equation}
In view of the convergence \eqref{eqLargeBallsConvergence}, of the bound \eqref{eqLargeBallsBound} and of the assumption \(u \in W^{\frac{n}{n + 1}, n + 1} (\Sset^{n}; \Rset^\nu)\), by Lebesgue's dominated convergence  we obtain
\[
 \lim_{k \to \infty} \int_{\Sset^n} \int_{\Sset^{n}} \frac{(1 - \abs{x_k}^2)^{2 n}\abs{u (y) - u (z)}^{n + 1}}{\abs{x_k - y}^{2 n} \abs{x_k - z}^{2 n}} \dif y \dif z = 0\ ,
\]
form which the convergence
\eqref{eqLargeBallsSequence} and \eqref{itemAsymptoticBehaviour} follow.
\end{proof} 

\begin{remark}
\label{remarkRetraction}
If \(\pi_M\) is a Lipschitz retraction from a neighborhood of size \(\iota\) of \(M\) to \(M\) and if the Gagliardo seminorm of the map \(u\) is small, that is,
\[
 \frac{4^{2n}}{\abs{S^{n}}^2}\int_{\Sset^n}\int_{\Sset^n} \frac{\abs{u (y) - u (z)}^{n + 1}}{\abs{y - z}^{2 n}}\dif y \dif z < \iota^{n + 1}\ ,
\]
where the constant is coming from lemma~\ref{lemmaGoodRegion} \eqref{itemUniformEstimate}, then an extension \(U\) can be constructed  by setting 
\(U = \pi_M \circ \mathfrak{H}u\). One has then \(U \in W^{1, n + 1}(\Bset^{n + 1}; M)\) and the norm of the extension is controlled linearly:
\[
  \int_{\Bset^{n + 1}} \abs{D U}^{n + 1} \le C' \int_{\Sset^n}\int_{\Sset^n} \frac{\abs{u (y) - u (z)}^{n + 1}}{\abs{y - z}^{2 n}} \dif y \dif z\ ,
\]
for some constant \(C'=CL^{n+1}> 0\) where \(C\) is the constant of proposition~\ref{propositionConformalExtension} and \(L\) is the Lipschitz constant of \(\pi_M\) (see \cite{PetracheRiviere2015}*{proposition 2.6 and theorem 4.4}).
\end{remark}

\subsection{Estimate on the density of spheres in the good set}

We are interested in controlling the norm of the extension in terms of the norm of the original function.
In this respect lemma~\ref{lemmaGoodRegion} is not good enough: the estimate \eqref{itemUniformEstimate} cannot be used unless the Gagliardo fractional seminorm \(\seminorm{u}_{W^{\frac{n}{n + 1}, n + 1}}\) is small enough; 
the bad region is bounded with an estimate obtained by Lebesgue's dominated convergence that depends not only on the Gagliardo fractional norm \(\seminorm{u}_{W^{\frac{n}{n + 1}, n + 1}}\), 
but also on the modulus of integrability of the integrand in the latter seminorm (see remark \ref{remarkModulusOfIntegrability}).

Several quantitative estimates on the bad set are already known.
First, the measure of the bad set can be bounded by the Hardy inequality:
\[
\begin{split}
  \mu_{\Hset^{n + 1}} \bigl(\bigl\{x \in \Hset^{n + 1} \st \dist \bigl(U (x), u (\Sset^n)\bigr) \ge \iota\bigr\}\bigr)
  &\le \frac{1}{\iota^{n + 1}}
  \int_{\Hset^{n + 1}} \dist (U, u (\Sset^n))^{n + 1} \dif \mu_{\Hset^{n + 1}}\\
  &\le \frac{C}{\iota^{n + 1}} \int_{\Hset^{n + 1}} \abs{D U}_{\Hset^{n + 1}} \dif \mu_{\Hset^{n + 1}}.
\end{split}
\]
The \(W^{1, n + 1}\), or conformal, capacity of the bad set relative to the ball is controlled naturally
\[
  \operatorname{cap}_{W^{1, n + 1}} \bigl(\{x \in \Bset^{n + 1} \st \dist \bigl(U (x), u (\Sset^n)\bigr)\ge \iota  \}, \Bset^{n + 1}\bigr)
  \le \frac{1}{\iota^{n + 1}} \int_{\Bset^{n + 1}} \abs{D U}^{n + 1};
\]
this gives a control on the hyperbolic diameter of the \emph{connected components} of the bad set \cite{Mostow1968}*{theorem 8.2}.
One has \cite{BourgainBrezisMironescu2005}*{proof of lemma 1.3}: 
\begin{equation}
\label{ineqBourgainBrezisMironescu}
 \int_{\Sset^{n}} \sup_{r \in (0, 1)} \frac{\abs{U (r x) - U (x)}^{n + 1}}{(\ln 1/r)^n} \dif x
 \le C \int_{\Bset^{n + 1}} \abs{D U}^{n + 1},
\end{equation}
from which an estimate about an average diameter of the bad set follows
\[
 \int_{\Sset^{n}} \sup_{r \in (0, 1)} \bigl\{(\ln 1/r)^{-n} \st \dist \bigl(U (rx), u(\Sset^n)\bigr) \ge \iota \bigr\}\dif x
 \le \frac{C}{\iota^{n + 1}} \int_{\Bset^{n + 1}} \abs{D U}^{n + 1}.
\]

We give a new quantitative estimate on the fraction of the spheres of radius between \(0\) and \(\rho\) which are bad. 
This estimate will only depend on the parameter \(\iota\) and on the Gagliardo seminorm \(\seminorm{u}_{W^{\frac{n}{n + 1}, n + 1}}\). 
It is a crucial new ingredient in our construction of the singular extension.

\begin{proposition}[Estimate on the density of spheres in the good set]
\label{propositionGoodRadiusFractional}
There exists \(C > 0\) such that for every \(\rho > 0\) and for every \(a\in\Hset^{n + 1}\)
we have
\begin{multline*}
 \frac{1}{\rho}\int_0^\rho  \norm{\dist (\mathfrak{H} u, u (\Sset^n))}_{L^\infty (\partial\Bset^{\Hset^{n + 1}}_r(a))}\dif r\\
 \le \frac{C}{1 + \rho^\frac{1}{n + 1}} \left(\int_{\Sset^n} \int_{\Sset^n} \frac{\abs{u (y) - u (z)}^{n + 1}}{\abs{y - z}^{2 n}}\dif y \dif z \right)^\frac{1}{n + 1} .
\end{multline*}
\end{proposition}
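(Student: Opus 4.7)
The plan is first to reduce the claim to the $L^{n+1}$-variant
\begin{equation*}
 \int_0^\infty \norm{\dist(\mathfrak{H}u, u(\Sset^n))}_{L^\infty(\partial\Bset^{\Hset^{n+1}}_r(a))}^{n+1}\dif r \le C_1\, N,
\end{equation*}
where $N := \int_{\Sset^n}\int_{\Sset^n}\abs{u(y)-u(z)}^{n+1}\abs{y-z}^{-2n}\dif y\dif z$. Once this is known, H\"older's inequality with exponents $n+1$ and $(n+1)/n$ gives $\frac{1}{\rho}\int_0^\rho\norm{\dist}_{L^\infty}\dif r\le C_1^{1/(n+1)}\rho^{-1/(n+1)} N^{1/(n+1)}$; combined with the uniform bound $\frac{1}{\rho}\int_0^\rho\norm{\dist}_{L^\infty}\dif r\le C_2 N^{1/(n+1)}$ from Lemma~\ref{lemmaGoodRegion}\eqref{itemUniformEstimate}, taking the minimum of the two yields the desired bound $\frac{C}{1+\rho^{1/(n+1)}} N^{1/(n+1)}$. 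By the M\"obius covariance of the hyperharmonic extension, of the hyperbolic spheres, and of the Gagliardo seminorm (equation~\eqref{eqGagliardoConformal}), I may assume $a=0$.

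The pivot is the majorant
\[
 h(x) := \int_{\Sset^n}\int_{\Sset^n}\frac{(1-\abs{x}^2)^{2n}\abs{u(y)-u(z)}^{n+1}}{\abs{x-y}^{2n}\abs{x-z}^{2n}}\dif y\dif z,
\]
which controls $\dist(\mathfrak{H}u(x), u(\Sset^n))^{n+1}$ pointwise (by the argument leading to Lemma~\ref{lemmaGoodRegion}\eqref{itemUniformEstimate}), and whose total hyperbolic mass satisfies $\int_{\Hset^{n+1}} h\dif\mu_{\Hset^{n+1}}\le C N$ by the same M\"obius-invariance computation used in the proof of Proposition~\ref{propositionConformalExtension}\eqref{estimateConformalExtension} (the inner integral in $x$ becoming a constant multiple of $\abs{y-z}^{-2n}$). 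The decisive feature of $h$ is that it is \emph{quasi-constant on hyperbolic unit balls}: by M\"obius covariance, $h(x) = \int_{\Sset^n}\int_{\Sset^n}\abs{v_x(\phi) - v_x(\psi)}^{n+1}\dif\phi\dif\psi$ with $v_x := u\circ T_x^{-1}$, and when $d_{\Hset^{n+1}}(x_1,x_2)\le 1$, writing $v_{x_2} = v_{x_1}\circ S$ with $S := T_{x_1}\circ T_{x_2}^{-1}\in\mobius(\Bset^{n+1})$ and noting $\abs{S(0)} = \abs{T_{x_1}(x_2)}\le\tanh(1/2)$, one sees that $\abs{DS}$ and $\abs{DS^{-1}}$ on $\Sset^n$ are bounded between constants depending only on $n$; a change of variable then yields $c_1 h(x_1)\le h(x_2)\le c_2 h(x_1)$. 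Consequently $\sup_{\Bset^{\Hset^{n+1}}_1(x_0)} h \le C\, \mu_{\Hset^{n+1}}(\Bset^{\Hset^{n+1}}_1(x_0))^{-1}\int_{\Bset^{\Hset^{n+1}}_1(x_0)} h\dif\mu_{\Hset^{n+1}}$ for every $x_0 \in \Hset^{n+1}$.

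For each $r>0$, cover $\partial\Bset^{\Hset^{n+1}}_r(0)$ by hyperbolic unit balls $\Bset^{\Hset^{n+1}}_1(x_{j,r})$ whose centers lie on the sphere and have pairwise hyperbolic distance at least $1/2$; a standard volume-comparison argument shows that the overlap of such a covering is bounded by a constant depending only on $n$. Combining the pointwise bound $\dist^{n+1}\le Ch$, the quasi-constancy of $h$, and the bounded overlap then gives
\[
 \norm{\dist}_{L^\infty(\partial\Bset^{\Hset^{n+1}}_r(0))}^{n+1} \le C\int_{\{\abs{d_{\Hset^{n+1}}(0,x)-r}\le 1\}} h\dif\mu_{\Hset^{n+1}}.
\]
Integrating in $r$ and applying Fubini --- since for every $x$ the slice $\{r\ge 0: \abs{d_{\Hset^{n+1}}(0,x)-r}\le 1\}$ has Lebesgue measure at most $2$ --- yields $\int_0^\infty\norm{\dist}_{L^\infty}^{n+1}\dif r\le C'\int_{\Hset^{n+1}} h\dif\mu_{\Hset^{n+1}}\le C'' N$, which is the desired $L^{n+1}$-estimate. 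The main obstacle is establishing the quasi-constancy of $h$ on unit hyperbolic balls, where the M\"obius invariance of the problem is indispensable; once this is in place, the bounded-overlap covering and the Fubini step are routine.
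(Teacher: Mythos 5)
Your proposal hinges on the claim
\[
 \int_{\Hset^{n+1}} h\dif\mu_{\Hset^{n+1}} \le C\int_{\Sset^n}\int_{\Sset^n}\frac{\abs{u(y)-u(z)}^{n+1}}{\abs{y-z}^{2n}}\dif y\dif z
\qquad\text{with}\qquad
 h(x) = \int_{\Sset^n}\int_{\Sset^n}\frac{(1-\abs{x}^2)^{2n}\abs{u(y)-u(z)}^{n+1}}{\abs{x-y}^{2n}\abs{x-z}^{2n}}\dif y\dif z,
\]
but this is false: the left-hand side is infinite whenever the right-hand side is positive. Indeed, by Fubini this would require the M\"obius-invariant kernel
$\abs{y-z}^{2n}\int_{\Bset^{n+1}}\frac{(1-\abs{x}^2)^{n-1}}{\abs{x-y}^{2n}\abs{x-z}^{2n}}\dif x$
to be a finite constant, yet it diverges logarithmically near $x=y$ and $x=z$: writing $s = 1-\abs{x}$ and $\theta$ for the angular distance to $y$, the integrand near $y$ behaves like $s^{n-1}\theta^{n-1}(s^2+\theta^2)^{-n}$, whose integral $\int\int s^{n-1}\theta^{n-1}(s^2+\theta^2)^{-n}\dif s\dif\theta = c_n\int_0^\epsilon \dif t/t$ diverges. (The proof of proposition~\ref{propositionConformalExtension} as displayed suffers from the same issue; that proposition is a classical fact and its statement is correct, but the sketched argument cannot be cited as establishing that the inner $x$-integral is a finite constant.) The root cause is that you apply Jensen at the pointwise level to upgrade the $L^1$-oscillation bound to the $L^{n+1}$-oscillation bound $\dist(\mathfrak Hu(x),u(\Sset^n))^{n+1}\le C\,h(x)$; the resulting majorant $h$ is simply too large to be integrated over $\Hset^{n+1}$.

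This gap is not merely technical: the natural salvage of your plan, namely restricting the Fubini step to $\Bset^{\Hset^{n+1}}_{\rho+1}(0)$ where $\int h\dif\mu$ is finite, produces an integral that grows like $\rho$ times the Gagliardo seminorm, and feeding that back through H\"older gives only the uniform bound $\frac{1}{\rho}\int_0^\rho\norm{\dist}_{L^\infty}\dif r\le CN^{1/(n+1)}$ already contained in lemma~\ref{lemmaGoodRegion}~\eqref{itemUniformEstimate}, losing the essential $\rho^{-1/(n+1)}$ improvement. The paper's proof of proposition~\ref{propositionGoodRadiusFractional} avoids this trap precisely by refusing to raise the oscillation to the power $n+1$ pointwise: it keeps $\abs{u(y)-u(z)}$ at the first power throughout the pointwise estimates, first performs the one-dimensional integration in $r$ (where the kernel $(1-r+\abs{e-y})^{-1}(1-r+\abs{e-z})^{-1}(1-r+\abs{e-y}+\abs{e-z})^{1-2n^2/(n+1)}$ is integrable) with H\"older in $r$, and only then applies H\"older on $\Sset^n\times\Sset^n$ together with the Hardy--Littlewood rearrangement. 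That ordering of Jensen/H\"older relative to the $r$-integration is exactly what yields the $\rho^{-1/(n+1)}$ gain, and it is the ingredient your argument is missing. Your observations about the quasi-constancy of $h$ on hyperbolic unit balls via M\"obius covariance are correct and elegant, but they cannot compensate for the divergence of $\int h\dif\mu$.
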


By the classical Chebyshev inequality, proposition~\ref{propositionGoodRadiusFractional} implies that the proportion of bad spheres of whose radius is between \(0\) and \(\rho\) is bounded from above by \(\seminorm{u}_{W^{\frac{n}{n + 1}, n + 1}}/(\iota \rho^{1/(n + 1)})\).

When the bad set is connected, proposition~\ref{propositionGoodRadiusFractional} gives an estimate on the hyperbolic diameter of the bad set.
Compared to \eqref{ineqBourgainBrezisMironescu}, the radial and spherical coordinates are reversed in the supremum and in the integral in proposition~\ref{propositionGoodRadiusFractional}.

\begin{proof}[Proof of proposition~\ref{propositionGoodRadiusFractional}]
When \(\rho \le 1\), the estimate follows from lemma~\ref{lemmaGoodRegion} \eqref{itemUniformEstimate}. 
We assume thus that \(\rho \ge 1\).

Since \(T_a (a) = 0\) and since \(T_a\) is a hyperbolic isometry we have \(T_a(\partial \mathbb B^{\mathbb H^{n+1}}_r(a)) =\partial \mathbb B^{\mathbb H^{n+1}}_r(0)\). 
In view of the covariance of the hyperharmonic extension \(\mathfrak{H} u\) under such maps \(T_a\)  (see proposition~\ref{propositionConformalExtension}), we observe that 
\[
\begin{split}
 \norm{\dist (\mathfrak{H} (u), u (\Sset^n))}_{L^\infty (\partial\Bset^{\Hset^{n + 1}}_r(a))}
 &=\norm{\dist ((\mathfrak{H} u) \circ T_a^{-1}, u (\Sset^n))}_{L^\infty (\partial\Bset^{\Hset^{n + 1}}_r(0))}\\
 &=\norm{\dist (\mathfrak{H} (u \circ T_a^{-1}), u (\Sset^n))}_{L^\infty (\partial\Bset^{\Hset^{n + 1}}_r(0))}\ .
\end{split}
\]
By conformal invariance of the Gagliardo fractional seminorm 
\(\seminorm{u}_{W^{n/(n + 1), n + 1}}\) (see \eqref{eqGagliardoConformal}),
we can thus also assume that \(a = 0\).

By elementary integral inequalities and the definition of the hyperharmonic extension \(\mathfrak{H}u\) \eqref{equationConformalExtension}, we have successively, for each \(x \in \Hset^{n + 1} \simeq \Bset^{n + 1}\),
\begin{equation}
\label{ineqPointwiseDistanceOscillationComposite}
\begin{split}
 \dist \bigl(\mathfrak{H}u (x), u (\Sset^{n})\bigr) &\le \frac{1}{\abs{\Sset^n}}
 \int_{\Sset^{n}} \abs{\mathfrak{H} u (x) - u (T_{x}^{-1} (z))} \dif z\\
 &= \frac{1}{\abs{\Sset^{n}}} \int_{\Sset^n} \Bigabs{\frac{1}{\abs{\Sset^{n}}}\int_{\Sset^n} u (T_{x}^{-1} (y)) - u (T_{x}^{-1} (z))\dif y} \dif z\\
 &\le  \frac{1}{\abs{\Sset^{n}}^2}\int_{\Sset^n} \int_{\Sset^n} \abs{u (T_{x}^{-1} (y)) - u (T_{x}^{-1} (z))} \dif z \dif y\ .
\end{split}
\end{equation}
By a change of variable, in view of the derivative formula for M\"obius transformations \eqref{eqMobiusDerivative} we have
\[
  \int_{\Sset^n} \int_{\Sset^n} \abs{u (T_{x}^{-1} (y)) - u (T_{x}^{-1} (z))} \dif z \dif y
  =  \int_{\Sset^n} \int_{\Sset^n} \frac{(1 - \abs{x}^2)^{2n} \abs{u (y) - u (z)}}{\abs{x - y}^{2 n}\abs{x - z}^{2 n}}\dif y \dif z\ ,
\]
and therefore, for each \(x \in \Hset^{n + 1} \simeq \Bset^{n + 1}\),
\[
\begin{split}
  \frac{\dist \bigl(\mathfrak{H}u (x), u (\Sset^{n})\bigr)}{1 - \abs{x}^2} &\le\frac{1}{\abs{\Sset^{n}}^{2}} \int_{\Sset^n} \int_{\Sset^n} \frac{(1 - \abs{x}^2)^{2n - 1} \abs{u (y) - u (z)}}{\abs{x - y}^{2 n}\abs{x - z}^{2 n}}\dif y \dif z\\
  &= \frac{1}{\abs{\Sset^{n}}^{2}}\int_{\Sset^n} \int_{\Sset^n} \frac{(1 - \abs{x}^2)^{2n - 1} \abs{y - z}^\frac{2 n}{n + 1}}{\abs{x - y}^{2 n} \abs{x - z}^{2 n}} \frac{\abs{u (y) - u (z)}}{\abs{y - z}^\frac{2 n}{n + 1}} \dif y \dif z\ .
\end{split}
\]

% We now consider the set
% \[
%   C_x = \{ (y, z) \in \Sset^n \times \Sset^n \st \abs{x - y} \le \abs{x - z}\}.
% \]
% We have then, since \(\abs{z - y} \le 2 \abs{x - z}\) on \(C_x\),
% \[
%   \frac{\dist (U (x), u (\Sset^{n}))}{1 - \abs{x}^2}
%   \le C \int_{C_x} \frac{\abs{u (y) - u (z)}}{\abs{y - z}^\frac{2 n}{n + 1}} \frac{(1 - \abs{x}^2)^{2n - 1}}{\abs{x - y}^{4 n - \frac{2n}{n + 1}}} \dif y \dif z.
% \]
We observe that for every \(x \in \Bset^{n + 1}\) and every \(y, z \in \Sset^n\),
\[
(1 - \abs{x}^2) = \abs{y}^2 - \abs{x}^2 = \abs{z}^2 - \abs{x}^2 \le 2 \min (\abs{x - y}, \abs{x - z})
\]
and that, by the triangle inequality
\[
  \abs{y - z} \le|x-y|+\abs{x-z}\le 2 \max (\abs{x - y}, \abs{x - z})\ .
\]
We have thus
\begin{equation}
\label{ineqGoodSphereIneqKernel}
\begin{split}
\frac{(1 - \abs{x}^2)^{2n - 1} \abs{y - z}^\frac{2 n}{n + 1}}{\abs{x - y}^{2 n} \abs{x - z}^{2 n}}\hspace{-2cm}\ &\\
&\le  \frac{2^{2n - 1 + \frac{2n}{n + 1}}\min (\abs{x - y}, \abs{x - z})^{2n - 1} \max (\abs{x - y}, \abs{x - z})^\frac{2 n}{n + 1}}{\abs{x - y}^{2 n} \abs{x - z}^{2 n}}\\
&= \frac{2^{2n - 1 + \frac{2n}{n + 1}}}{\min (\abs{x - y}, \abs{x - z}) \max (\abs{x - y}, \abs{x - z})^\frac{2 n^2}{n + 1}}\\
&\le \frac{2^{4n - 2}}{\abs{x - y}\, \abs{x - z}\, (\abs{x - y} + \abs{x - z})^{\frac{2 n^2}{n + 1} - 1}}\ ,
\end{split}
\end{equation}
and therefore, for each \(x \in \Hset^{n + 1} \simeq \Bset^{n + 1}\)
\[
  \frac{\dist \bigl(\mathfrak{H} u (x), u (\Sset^{n})\bigr)}{1 - \abs{x}^2}
  \le \frac{2^{4n - 1}}{\abs{\Sset^{n}}^2}\int_{\Sset^n} \int_{\Sset^n}\frac{f (y, z)}{\abs{x - y}\, \abs{x - z} (\abs{x - y} + \abs{x - z})^{\frac{2 n^2}{n + 1} - 1}} \dif y \dif z\ ,
\]
where the function \(f : \Sset^{n} \times \Sset^{n} \to \Rset\) is defined for each \(y, z \in \Sset^n\) by 
\[
 f (y, z) = \frac{\abs{u (y) - u (z)}}{\abs{y - z}^\frac{2 n}{n + 1}}\ .
\]

We fix \(e \in \Sset^n\) and we define then the symmetric decreasing rearrangement of \(f\) with respect to the first variable to be the function \(f^* : \Sset^n \times \Sset^n \to \Rset\) such that for each \(\lambda > 0\) and \(z \in \Sset^n\), 
\(f^*(\cdot, z)^{-1} ((\lambda, \infty))\) is a geodesic ball centered at \(e\) that has the same measure in \(\Sset^{n}\) as \(f (\cdot, z)^{-1} ((\lambda, \infty))\).
By the classical Hardy--Littlewood rearrangement inequality \cite{CroweZweibelRosenbloom1986} (see also \cite{LiebLoss}*{theorem 3.4}), we have, for every \(z \in \Sset^n\),
\begin{multline}
\label{ineqRearr1}
 \int_{\Sset^n} \frac{f (y, z)}{\abs{x - y}\, \abs{x - z}\, (\abs{x - y} + \abs{x - z})^{\frac{2 n^2}{n + 1} - 1}} \dif y\\
 \le \int_{\Sset^n} \frac{f^* (y, z)}{\abs{y - \abs{x}e}\, \abs{x - z}\, (\abs{y - \abs{x} e} + \abs{x - z})^{\frac{2 n^2}{n + 1}-1}} \dif y\ .
\end{multline}
We also define the the symmetric decreasing rearrangement of \(f^*\) with respect to the second variable to be the function \(f^{**} : \Sset^n \times \Sset^n \to \Rset\) such that for each \(\lambda > 0\) and \(y \in \Sset^n\), 
\(f^{**}(y, \cdot)^{-1} ((\lambda, \infty))\) is a geodesic ball centered at \(e\) whose measure in \(\Sset^{n}\) is the same as the measure of \(f^* (y, \cdot)^{-1} ((\lambda, \infty))\).
By the Hardy--Littlewood rearrangement inequality again, we have, for every \(y \in \Sset^n\),
\begin{multline}
\label{ineqRearr2}
\int_{\Sset^n} \frac{f^* (y, z)}{\abs{y - \abs{x}e} \abs{x - z}\, (\abs{y - \abs{x} e} + \abs{x - z})^{\frac{2 n^2}{n + 1} - 1}} \dif z\\
\le \int_{\Sset^n} \frac{f^{**} (y, z)}{\abs{y - \abs{x}e}\, \abs{z - \abs{x}e} \, (\abs{y - \abs{x} e} + \abs{z - \abs{x}e})^{\frac{2 n^2}{n + 1} - 1}} \dif z\ .
\end{multline}
The combination of the rearrangement inequalities \eqref{ineqRearr1} and \eqref{ineqRearr2} 
implies that for every \(x \in \Hset^{n + 1} \simeq \Bset^{n + 1}\),
\begin{multline*}
  \frac{\dist \bigl(\mathfrak{H}u (x), u (\Sset^{n})\bigr)}{1 - \abs{x}^2}\\
  \le \frac{2^{4n - 1}}{\abs{\Sset^{n}}^2}\int_{\Sset^n} \int_{\Sset^n} \frac{f^{**} (y, z)}{\abs{y - \abs{x}e}\, \abs{z - \abs{x}e}\,  (\abs{y - \abs{x} e} + \abs{z - \abs{x}e})^{\frac{2 n^2}{n + 1} - 1}}\dif y \dif z\ .
\end{multline*}
We now observe that for each \(y \in \Sset^n\) and \(x \in \Hset^{n + 1} \simeq \Bset^{n + 1}\),
\begin{equation}
\label{ineqTriple}
  3 \abs{y - \abs{x} e} \ge (\abs{y - e} - \abs{e - \abs{x}e}) + 2(\abs{y} - \abs{\abs{x}e})
= \abs{y - e} + 1 - \abs{x}\ ,
\end{equation}
and thus for every \(x \in \Hset^{n + 1} \simeq \Bset^{n + 1}\)
\begin{multline}
\label{eqtsrn}
\frac{\dist (\mathfrak{H}u (x), u (\Sset^{n}))}{1 - \abs{x}^2}\\
  \le C_1 \int_{\Sset^n} \int_{\Sset^n} \frac{f^{**} (x, y)}{(1 - \abs{x} + \abs{e - y})(1 - \abs{x}+ \abs{e - z} )(1 - \abs{x} + \abs{e - y} + \abs{e - z})^{\frac{2 n^2}{n + 1} - 1}}\dif y \dif z\ .
\end{multline}

If we set \(R = \tanh \frac{\rho}{2}\), so that \(\Bset^{\Hset^{n + 1}}_R = \Bset^{n + 1}_\rho\), we have by definition of the Poincar\'e metric and by \eqref{eqtsrn}, for every \(x \in \Hset^{n + 1} \simeq \Bset^{n + 1}\)
\begin{multline*}
\int_0^\rho \norm{\dist (\mathfrak{H} u, u (\Sset^n))}_{L^\infty (\partial\Bset^{\Hset^{n + 1}}_r(a))} \dif r
= \int_0^R
\frac{\norm{\dist (\mathfrak{H} u, u (\Sset^n))}_{L^\infty (\partial\Bset_r(0))}}{1 - r^2}\dif r \\
\le C_1 \int_0^R \int_{\Sset^n} \int_{\Sset^n} \frac{f^{**} (x, y)}{(1 - r + \abs{e - y})\, (1 - r + \abs{e - z}) (1 - r + \abs{e - y}+ \abs{e - z})^{\frac{2 n^2}{n + 1} - 1}}\dif y \dif z \dif r\ .
\end{multline*}
By the H\"older inequality on \((0, R)\), we have for each \(y, z \in \Sset^{n}\), 
\begin{multline*}
\int_0^R \frac{1}{(1 - r + \abs{e - y})\, (1 - r + \abs{e - z}) (1 - r + \abs{e - y}+ \abs{e - z})^{\frac{2 n^2}{n + 1} - 1}} \dif r\\
\le \Bigl(\int_0^R \frac{1}{(1 - r + \abs{e - y})^{n + 1}} \dif r\Bigr)^\frac{1}{n + 1}
\Bigl(\int_0^R \frac{1}{(1 - r + \abs{e - z})^{n + 1}} \dif r\Bigr)^\frac{1}{n + 1}\\
\shoveright{
\times \Bigl(\int_0^R \frac{1}{(1 - r + \abs{e - y}+ \abs{e - z})^{2n + 1}} \dif r \Bigr)^\frac{n - 1}{n + 1}}\\
\le \frac{C_2}{(1 - R + \abs{e - y})^\frac{n}{n + 1}(1 - R + \abs{e - z})^\frac{n}{n + 1}(1 - R + \abs{e - y} + \abs{e - z})^\frac{2n (n - 1)}{n + 1}}\ .
\end{multline*}
By the H\"older inequality on \(\Sset^n \times \Sset^n\) we deduce
\begin{multline}
\label{ineqGoodSpheresFinalHolder}
\int_0^\rho \norm{\dist (\mathfrak{H} u, u (\Sset^n))}_{L^\infty (\partial\Bset^{\Hset^{n + 1}}_r(a))} \dif r\le C_3 \Bigl(\int_{\Sset^{n}} \int_{\Sset^{n}} \abs{f^{**}}^{n + 1}\Bigr)^\frac{1}{n + 1}\\
\times \Bigl(\int_{\Sset^{n}} \int_{\Sset^{n}}\frac{1}{(1 - R + \abs{e - y})(1 - R + \abs{e - z})(1 - R + \abs{e - y} + \abs{e - z})^{2 n - 2} }\dif y \dif z\Bigr)^\frac{n}{n + 1}.
\end{multline}
By the Cavalieri principle and by definition of \(f\), we obtain 
\[
\begin{split}
\int_{\Sset^{n}\times \Sset^{n}} \abs{f^{**}}^{n + 1}&=\int_{\Sset^{n}\times \Sset^{n}} \abs{f^{*}}^{n + 1}=\int_{\Sset^{n} \times \Sset^{n}} \abs{f}^{n + 1}\\
&=\int_{\Sset^{n}} \int_{\Sset^{n}} \frac{\abs{u (y) - u (z)}^{n + 1}}{\abs{y - z}^{2 n}}\dif y \dif z\ .
\end{split}
\]
The second integral in \eqref{ineqGoodSpheresFinalHolder} is bounded by
\begin{multline*}
  C_4 \int_{\Sset^n} \int_{\abs{z - e} \ge \abs{y - e}} \frac{1}{(1 - R + \abs{e - y})(1 - R + \abs{e - y} + \abs{e - z})^{2 n - 1} }\dif y \dif z\\
  \le C_5 \int_{\Sset^n} \frac{1}{(1 - R + \abs{e - y})^n} \dif y 
  \le C_6 \Bigl(\ln \frac{1}{1 - R} + 1\Bigr)\\
  =C_6 \Bigl(\ln \frac{e^{\rho} + 1}{2} + 1\Bigr)
  \le C_6 (\rho + 1)\ ,
\end{multline*}
since \(R = \tanh \frac{\rho}{2}\). 
The conclusion follows because \(\rho^{-1}(\rho +1)^{\frac{n}{n+1}}\simeq (1+\rho^{\frac{1}{n+1}})^{-1}\) for \(\rho \ge 1\).
\end{proof}
% \begin{remark}
% One could want to use instead of \eqref{ineqGoodSphereIneqKernel} the simpler but weaker bound 
% \[
%  \frac{(1 - \abs{x}^2)^{2n - 1} \abs{y - z}^\frac{2 n}{n + 1}}{\abs{x - y}^{2 n} \abs{x - z}^{2 n}}
%  \le \frac{2^{2n - 1 + \frac{2 n}{n + 1}}}{(\abs{x - y}\, \abs{x - z})^{\frac{n^2}{n + 1} + \frac{1}{2}}};
% \]
% this would however get as a final estimate, coming from the integration of the second factor in the H\"older inequality on \(\Sset^n \times \Sset^n\) 
% \begin{multline*}
%  \frac{1}{\rho}\int_0^\rho  \norm{\dist (\mathfrak{H} u, u (\Sset^n))}_{L^\infty (\partial\Bset^{\Hset^{n + 1}}_r(a))}\dif r\\
%  \le C (1 + \rho^\frac{n - 1}{n + 1}) \left(\int_{\Sset^n} \int_{\Sset^n} \frac{\abs{u (y) - u (z)}^{n + 1}}{\abs{y - z}^{2 n}}\dif y \dif z \right)^\frac{1}{n + 1}\ ,
% \end{multline*} 
% that is, an additional factor \(\rho^\frac{n}{n + 1}\) has been introduced, which makes the  right-hand side unfortunately diverge as \(\rho \to \infty\).
% \end{remark}

\begin{remark}
\label{remarkModulusOfIntegrability}
The strategy of proof of proposition~\ref{propositionGoodRadiusFractional} also works on the Euclidean space and allows to prove that if \(v \in W^{\frac{n}{n + 1}, n + 1} (\Rset^n)\) and \(\eta_r (x) = \eta (x / r)/r^n\) with \(\rho \in C_c (\Rset^n)\), then 
\begin{multline}
%Numbered for potential reference in future work
 \frac{1}{\ln \frac{R}{\rho}} \int_\rho^R \norm{\dist (\eta_r \ast v, v (\Rset^n))}_{L^\infty} \frac{\dif r}{r}\\
 \le \frac{C}{1 + \bigl(\ln \frac{R}{\rho}\bigr)^\frac{1}{n + 1}} \Bigl(\int_{\Rset^n} \int_{\Rset^n} \frac{\abs{v (y) - v (z)}^{n + 1}}{\abs{y - z}^{2n}}\dif y \dif z \Bigr)^\frac{1}{n + 1}\ .
\end{multline}
\end{remark}

\section{Concentration of singularities}
\label{sectionHomogeneous}

In the previous section, we have established the set of bad points is compact and that the set of good points contains many hyperbolic spheres of controlled radius. 
We would now like to propagate the good values from the boundary of hyperbolic balls that contain bad points.

\subsection{Extension on a hyperbolic ball}
Our first tool is an estimate of the radial extension: starting from a map in \(W^{1, n + 1} (\partial \Bset^{\Hset^{n + 1}}_\rho(a), \Rset^\nu)\), we extend it by homogeneity inside.
The exponent is \(n + 1\) critical: this extension is not in the space 
\(W^{1, n + 1} (\Bset^{\Hset^{n + 1}}_\rho(a), \Rset^\nu)\) 
because of this singularity at \(a\), 
but this singularity is however compatible with a Marcinkiewicz weak \(L^{n + 1}\) estimate on the derivative.

\begin{lemma}[Radial extension]
\label{lemmaHomogeneousExtension}
Let \(n \ge 1\), \(a \in \Hset^{n + 1}\), \(\rho > 0\) and \(\pi_{a, \rho}\) be the nearest point projection from \(\Bset^{\Hset^{n + 1}}_\rho (a) \setminus \{a\}\) to \(\partial \Bset^{\Hset^{n + 1}}_\rho (a)\).
If \(u  \in W^{1, n + 1} (\partial \Bset^{\Hset^{n + 1}}_\rho(a); \Rset^\nu)\), then 
\begin{enumerate}[(i)]
 \item \label{lemmaHomogeneousWeaklyDifferentiable} \(u \circ \pi_{a, \rho}\) is weakly differentiable in \(\Bset^{\Hset^{n + 1}}_\rho (a)\),
\vspace{1em}
\item \label{lemmaHomogeneousWeakEstimate}for every \(\lambda > 0\),
\[
\lambda^{n + 1} \mu_{\Hset^{n + 1}}\bigl(\bigl\{x \in \Bset^{\Hset^{n + 1}}_\rho (a) \st \abs{D( u \circ \pi_{a, \rho})(x)} > \lambda\bigr\} \bigr)
\le \frac{\sinh (\rho)}{n+1} \int_{\partial \Bset^{\Hset^{n + 1}}_\rho}  \abs{D u}^{n + 1}\ ,
\]
\item \label{lemmaHomogeneousNonSingular}
for every \(\delta > 0\), \( u \circ \pi_{a, \rho} \in W^{1, n + 1} (\Bset^{\Hset^{n + 1}}_\rho \setminus \Bset^{\Hset^{n + 1}}_\delta)\) and
\[
  \int_{\Bset^{\Hset^{n + 1}}_\rho \setminus \Bset^{\Hset^{n + 1}}_\delta}
    \abs{D (u \circ \pi_{a, \rho})}^{n+1}\dif \mu_{\Hset^{n + 1}} = \sinh(\rho)\ln\frac{\tanh\left(\frac{\rho}{2}\right)}{\tanh\left(\frac{\delta}{2}\right)}\int_{\partial \Bset^{\Hset^{n + 1}}_\rho}  \abs{D u}^{n + 1}\ .
\]
\end{enumerate}
\end{lemma}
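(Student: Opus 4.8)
The plan is to pass to hyperbolic geodesic polar coordinates \((r, \omega) \in (0, \infty) \times \Sset^n\) centred at \(a\) (equivalently, one may first move \(a\) to the origin with the M\"obius isometry \(T_a\), since \(\mu_{\Hset^{n + 1}}\), \(\abs{\cdot}_{\Hset^{n + 1}}\), the hyperbolic surface measure on geodesic spheres, and the intrinsic gradient are all isometry-invariant). In these coordinates the hyperbolic metric is \(\dif r^2 + \sinh^2 r \, g_{\Sset^n}\), so \(\dif \mu_{\Hset^{n + 1}} = \sinh^n r \dif r \dif \omega\) and the hyperbolic surface measure on \(\partial \Bset^{\Hset^{n + 1}}_\rho (a)\) is \(\sinh^n \rho \dif \omega\), where \(g_{\Sset^n}\) and \(\dif \omega\) denote the round metric and measure on \(\Sset^n\). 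The nearest point projection becomes \(\pi_{a, \rho} (r, \omega) = (\rho, \omega)\), so the radial extension, which I abbreviate by \(V\), is \(V (r, \omega) = u (\omega)\) and is independent of \(r\). The geodesic sphere of hyperbolic radius \(r\) carries the induced metric \(\sinh^2 r \, g_{\Sset^n}\) and is thus homothetic to the one of radius \(\rho\) with ratio \(\sinh r / \sinh \rho\); since the radial component of \(D V\) vanishes, comparing the intrinsic differentials of \(u\) on these two spheres yields the pointwise identity
\[
  \abs{D V (r, \omega)}_{\Hset^{n + 1}} = \frac{\sinh \rho}{\sinh r} \, \abs{D u (\omega)}\ ,
\]
where \(\abs{D u}\) is the norm of the differential of \(u\) on the sphere \(\partial \Bset^{\Hset^{n + 1}}_\rho (a)\). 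This identity is the core of the lemma; the three assertions are essentially bookkeeping around it.

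For \eqref{lemmaHomogeneousWeaklyDifferentiable}, I would first record that, as \(n + 1 > n = \dim \partial \Bset^{\Hset^{n + 1}}_\rho (a)\), the Morrey--Sobolev embedding makes \(u\) continuous and bounded. On every annulus \(\bigl\{ \delta < r < \rho \bigr\}\) with \(\delta > 0\), the map \(V\) is weakly differentiable by Fubini's theorem, since it does not depend on \(r\) and \(V (r, \cdot) = u \in W^{1, n + 1} (\Sset^n; \Rset^\nu)\) for every \(r\); its weak differential is the one given by the identity above, which, together with \(\int_0^\rho \sinh^{n - 1} r \dif r < \infty\), shows that \(D V \in L^1 \bigl( \Bset^{\Hset^{n + 1}}_\rho (a) \bigr)\). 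To incorporate the centre, I would test an arbitrary \(\phi \in C^\infty_c \bigl( \Bset^{\Hset^{n + 1}}_\rho (a) \bigr)\) against \(\eta_\varepsilon \phi\), where \(\eta_\varepsilon\) vanishes on \(\Bset^{\Hset^{n + 1}}_\varepsilon (a)\), equals \(1\) off \(\Bset^{\Hset^{n + 1}}_{2 \varepsilon} (a)\), and satisfies \(\abs{D \eta_\varepsilon}_{\Hset^{n + 1}} \le C / \varepsilon\): the resulting error in the integration by parts is bounded by \(\norm{u}_{L^\infty}\) times \(\int_{\Bset^{\Hset^{n + 1}}_{2 \varepsilon} (a)} \abs{D \eta_\varepsilon}_{\Hset^{n + 1}} \dif \mu_{\Hset^{n + 1}}\) up to a lower order term coming from \((1 - \eta_\varepsilon) D \phi\), and since \(\mu_{\Hset^{n + 1}} \bigl( \Bset^{\Hset^{n + 1}}_{2 \varepsilon} (a) \bigr) \le C' \varepsilon^{n + 1}\) this is at most \(C'' \varepsilon^{n}\), which tends to \(0\) as \(\varepsilon \to 0\). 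Combined with dominated convergence for \(D V \in L^1\), this yields weak differentiability on all of \(\Bset^{\Hset^{n + 1}}_\rho (a)\).

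Parts \eqref{lemmaHomogeneousWeakEstimate} and \eqref{lemmaHomogeneousNonSingular} are then direct computations from the pointwise identity; I write \(g = \abs{D u}\) as a function on \(\Sset^n\) and use \(\int_{\Sset^n} g^{n + 1} \dif \omega = \sinh^{-n} \rho \int_{\partial \Bset^{\Hset^{n + 1}}_\rho} \abs{D u}^{n + 1}\), the \(\sinh^n \rho\) being the density of the hyperbolic surface measure. For \eqref{lemmaHomogeneousNonSingular}, integrating \((\sinh \rho / \sinh r)^{n + 1} g(\omega)^{n + 1}\) against \(\sinh^n r \dif r \dif \omega\) over \(\bigl\{ \delta < r < \rho \bigr\} \times \Sset^n\) gives
\[
  \int_{\Bset^{\Hset^{n + 1}}_\rho \setminus \Bset^{\Hset^{n + 1}}_\delta} \abs{D V}_{\Hset^{n + 1}}^{n + 1} \dif \mu_{\Hset^{n + 1}} = \sinh^{n + 1} \rho \Bigl( \int_{\Sset^n} g^{n + 1} \dif \omega \Bigr) \int_\delta^\rho \frac{\dif r}{\sinh r}\ ,
\]
and the announced formula follows from \(\int_\delta^\rho \dif r / \sinh r = \ln \bigl( \tanh (\rho / 2) / \tanh (\delta / 2) \bigr)\); finiteness of this quantity together with \eqref{lemmaHomogeneousWeaklyDifferentiable} and the boundedness of \(V\) gives \(V \in W^{1, n + 1} \bigl( \Bset^{\Hset^{n + 1}}_\rho \setminus \Bset^{\Hset^{n + 1}}_\delta \bigr)\). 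For \eqref{lemmaHomogeneousWeakEstimate}, at fixed \(\omega\) the superlevel set \(\bigl\{ \abs{D V} > \lambda \bigr\}\) is the interval \(0 < r < r^{*}(\omega)\) with \(\sinh \bigl( r^{*}(\omega) \bigr) = \min \bigl( \sinh \rho,\ \sinh \rho \, g(\omega) / \lambda \bigr)\), and since \((\sinh^{n + 1})' = (n + 1) \sinh^n \cosh \ge (n + 1) \sinh^n\) we get
\[
  \int_0^{r^{*}(\omega)} \sinh^n r \dif r \le \frac{\sinh^{n + 1} \bigl( r^{*}(\omega) \bigr)}{n + 1} \le \frac{(\sinh \rho)^{n + 1} g(\omega)^{n + 1}}{(n + 1) \lambda^{n + 1}}\ ;
\]
integrating this in \(\omega\) and converting as above gives precisely \(\lambda^{n + 1} \mu_{\Hset^{n + 1}} \bigl( \bigl\{ \abs{D V} > \lambda \bigr\} \bigr) \le \tfrac{\sinh \rho}{n + 1} \int_{\partial \Bset^{\Hset^{n + 1}}_\rho} \abs{D u}^{n + 1}\).

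I expect the one genuine difficulty to be the removable-singularity step at \(a\) in \eqref{lemmaHomogeneousWeaklyDifferentiable}, and --- just as delicate --- fixing the normalisations so that the \(\sinh^n \rho\) carried by the hyperbolic surface measure on \(\partial \Bset^{\Hset^{n + 1}}_\rho (a)\) leaves the factor \(\sinh \rho\), and not \(\sinh^{n + 1} \rho\), in the constants of \eqref{lemmaHomogeneousWeakEstimate} and \eqref{lemmaHomogeneousNonSingular}. Once the polar-coordinate picture is set up correctly, those two parts are one-line computations.
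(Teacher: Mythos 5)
Your proof is correct and follows the same strategy as the paper's: reduce to the centred case, derive the pointwise identity for the gradient of the radial extension, integrate in polar coordinates and swap orders via Fubini to obtain the Marcinkiewicz and Lebesgue bounds, and remove the singularity at the centre using \(L^1\)-integrability of the gradient. The two presentational differences are that you set up the computation in intrinsic hyperbolic geodesic polar coordinates \(\dif r^2 + \sinh^2 r \, g_{\Sset^n}\), whereas the paper works with explicit Euclidean formulas in the Poincar\'e ball (substituting \(R = \tanh(\rho/2)\) and \(s = r/(1 - r^2)\), which is \(\tfrac12\sinh\) of the hyperbolic radius, so your relaxation \(\cosh r \ge 1\) is exactly the paper's bound \(\tfrac{1}{1 - r^2} \le \tfrac{1 + r^2}{(1 - r^2)^2}\)), and that you handle removability of the centre by a direct cutoff argument rather than citing \cite{mazya}*{theorem 1.1.18}; both are sound and your route is arguably cleaner.
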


The nearest point projection \(\pi_{a, \rho}\) can be described geometrically as follows: starting from \(a\) one follows the geodesic passing through \(x \in \Bset^{\Hset^{n + 1}}_\rho (a)\); 
\(\pi_{a, \rho} (x)\) is the intersection point of this geodesic with \(\partial \Bset^{\Hset^{n + 1}}_\rho\).
Alternatively, in exponential coordinates around \(a\), 
\[
 \pi_{a, \rho} (x) = \frac{\rho}{\abs{x}} x\,.
\]

In particular, if the map \(u\) takes good values on \(\partial \Bset^{\Hset^{n + 1}}_\rho (a)\), then \(u \circ \pi_{a, \rho}\) takes good values on the whole \(\Bset^{\Hset^{n + 1}}_\rho (a)\). 
This will allow us to fill in balls whose boundary consists of good points.

The Euclidean counterparts of \eqref{lemmaHomogeneousWeakEstimate} and \eqref{lemmaHomogeneousNonSingular} are
\[
 \lambda^{n + 1} \bigabs{\bigl\{x \in \Bset^{n + 1} \st \abs{D (u \circ \pi_{a, \rho})(x)}> \lambda\bigr\}}
  \le \frac{\rho}{n + 1}  \int_{\partial \Bset_\rho} \abs{D u}^{n + 1},
\]
and
\[
  \int_{\Bset_\rho \setminus \Bset_\delta} \abs{D (u \circ \pi_{a, \rho})}^{n+1} \le \rho \ln \frac{\rho}{\delta} \int_{\partial \Bset_\rho} \abs{D u}^{n + 1}\ .
\]

\begin{proof}[Proof of Lemma~\ref{lemmaHomogeneousExtension}]
We first note that the map \(u \circ \pi_{a, \rho}\) is weakly differentiable in \(\Bset^{\Hset^{n + 1}}_\rho (a)\). 
In order to prove assertion \eqref{lemmaHomogeneousWeaklyDifferentiable}, we shall prove \eqref{lemmaHomogeneousWeakEstimate} in order to remove the singularity at \(a\).

Without loss of generality, we assume that \(a = 0\). 
We set \(R =  \tanh \frac{\rho}{2}\), so that in the Poincar\'e ball model of the hyperbolic space, \(\Bset^{\Hset^{n + 1}}_\rho (0) \simeq \Bset^{n + 1}_R (0)\).
We can then write explicitly for each \(x \in \Bset^{\Hset^{n + 1}}_\rho (0) \simeq \Bset^{n + 1}_R (0)\),
\[
 (u \circ \pi_{a, \rho})(x) = u \Bigl( \frac{R}{\vert x \vert}x\Bigr)\ ,
\]
and thus 
\begin{equation}
\label{eqSingularEstimatePoint}
 \abs{D (u \circ \pi_{a, \rho}) (x)} =  \frac{R}{\abs{x}} \abs{D u(\pi_{a, \rho} (x))}\ .
\end{equation}
For \(\lambda > 0\), we compute now by integration in spherical coordinates and by \eqref{eqSingularEstimatePoint},
\[
\begin{split}
  \mu_{\Hset^{n + 1}} \bigl(\bigl\{x \in &\Bset^{\Hset^{n + 1}}_\rho (0) \st \abs{D (u \circ \pi_{a, \rho})}_{\Hset^{n + 1}} (x) > \lambda \bigr\} \bigr)
 \\&= 2^{n + 1} \int_0^R \frac{\mathcal{H}^{n} \bigl(\bigl\{x \in \partial \Bset_r (0) \st (1 - r^2)\abs{D (u \circ \pi_{a, \rho})} (x) > 2\lambda \bigr\}\bigr)}{(1- r^2)^{n + 1}}\dif r\\
 &= 2^{n + 1} \int_0^R \frac{\mathcal{H}^{n} \bigl(\bigl\{x \in \partial \Bset_R (0) \st (1 - r^2)\abs{D u} (x) > 2\lambda r /R \bigr\}\bigr)r^n}{(1- r^2)^{n + 1}R^n}\dif r\\
 & \le 2^{n + 1} \int_0^1 \frac{\mathcal{H}^{n} \bigl(\bigl\{x \in \partial \Bset_R (0) \st (1 - r^2)\abs{D u} (x) > 2\lambda r /R \bigr\}\bigr)r^n}{(1- r^2)^{n + 1}R^n}\dif r\\
\end{split}
\]
By Fubini's theorem we obtain
\begin{equation}
\label{eqSingularEstimateFubini}
\begin{split}
  \mu_{\Hset^{n + 1}} \bigl(\bigl\{x \in \Bset^{\Hset^{n + 1}}_\rho (0) \st \abs{D (u \circ \pi_{a, \rho})}_{\Hset^{n + 1}} (x) > \lambda \bigr\} \bigr)
   \hspace{-16em}&\\ 
  &\le \frac{2^{n+1}}{R^n}\int_{\partial \Bset_R (0)}\int_{\frac{r}{1 - r^2} \le \frac{R \abs{D u (x)}}{2\lambda}}\frac{r^n}{(1-r^2)^{n+1}}\dif r\dif \mathcal H^n (x)\ .
\end{split}
\end{equation}
For every \(x \in \partial \Bset_R (0)\), we compute, by a change of variable \(s = \frac{r}{1 - r^2}\),
\begin{equation}
\label{eqSingularEstimaterIntegral}
\begin{split}
  \int_{\frac{r}{1 - r^2} \le \frac{R \abs{D u (x)}}{2\lambda} } \frac{r^n}{(1 - r^2)^{n + 1}}\dif r
  & \le \int_{\frac{r}{1 - r^2} \le \frac{R \abs{D u (x)}}{2\lambda} } \left(\frac{r}{1 - r^2}\right)^{n} \frac{1 + r^2}{(1 - r^2)^2} \dif r\\
  & = \int_0^{\frac{R \abs{D u (x)}}{2\lambda}} s^n \dif s = \frac{R^{n + 1} \abs{D u (x)}^{n + 1}}{(n + 1)\lambda^{n + 1}2^{n + 1}}\ .
\end{split}
\end{equation}
Therefore, by inserting the latest estimate \eqref{eqSingularEstimaterIntegral} into \eqref{eqSingularEstimateFubini}, we reach the inequality
\[
\begin{split}
  \lambda^{n + 1}\mu_{\Hset^{n + 1}} & \bigl(\bigl\{x \in \Bset^{\Hset^{n + 1}}_\rho \st \abs{D (u \circ \pi_{a, \rho})}_{\Hset^{n + 1}} (x) \ge \lambda \bigr\} \bigr)\\
  & \le \frac{R}{n+1} \int_{\partial \Bset_R} \abs{D u}^{n + 1} \dif \mathcal{H}^n\\
  & = \frac{2R}{(n+1)(1 - R^2)} \int_{\partial \Bset^{\Hset^{n + 1}}_\rho} \abs{D u}_{\mathbb H^{n + 1} }^{n + 1}= \frac{\sinh(\rho)}{n+1} \int_{\partial \Bset^{\Hset^{n + 1}}_\rho} \abs{D u}_{\mathbb H^{n + 1} }^{n + 1}\ ,
\end{split}
\]
and the inequality \eqref{lemmaHomogeneousWeakEstimate} is proved. 
Since \(n \ge 1\), this implies that \(\abs{D (u \circ \pi_{a, \rho})} \in L^1 (\Bset_R^{n+1} (0))\), and since points are removable for weakly differentiable maps starting from dimension \(2\) (see \cite{mazya}*{theorem 1.1.18}), \(u \circ \pi_{a, \rho}\) is weakly differentiable on the whole \(\Bset_R^{n+1} (0)\), proving \eqref{lemmaHomogeneousWeaklyDifferentiable}.

We finally prove \eqref{lemmaHomogeneousNonSingular}.
\[
\begin{split}
  \int_{\Bset^{\Hset^{n + 1}}_\rho (0) \setminus \Bset^{\Hset^{n + 1}}_\delta (0)} \abs{D (u \circ \pi_{a, \rho})}^{n + 1}_{\Hset^{n + 1}}\dif \mu_{\Hset^{n + 1}}
  & =\int_{\Bset_R (0) \setminus \Bset_d (0)} \abs{D (u \circ \pi_{a, \rho})}^{n + 1}\\
  &=\int_d^R \int_{\partial \Bset_R (0)} \abs{D u}^{n + 1} \frac{R}{r}\dif r\\
  &=\frac{2R}{1 - R^2} \ln \frac{R}{d} \int_{\partial \Bset^{\Hset^{n + 1}}_\rho} \abs{D u}^{n + 1}_{\Hset^{n + 1}} \dif \mathcal{H}^n\\
  & = \sinh (\rho) \ln \frac{\tanh \frac{\rho}{2}}{\tanh \frac{\delta}{2}} \int_{\partial \Bset^{\Hset^{n + 1}}_\rho} \abs{D u}^{n + 1}_{\Hset^{n + 1}}\ .
\end{split}
\]
\end{proof}

\begin{remark}
Lemma~\ref{lemmaHomogeneousExtension} allows to construct directly an extension 
in the space \(W^{1, (n + 1, \infty)}(\Bset^{n + 1};M)\), \emph{without control on the norm}. 
If \(\mathfrak{H}u\) is the hyperharmonic extension of \(u \in W^{n/(n + 1), n + 1} (\Sset^n; M)\) given by \eqref{equationConformalExtension}, then \(\mathfrak{H} u \in W^{1, n + 1} (\Bset^{n + 1}; \Rset^{\nu})\). 
By lemma~\ref{lemmaGoodRegion}, there exists \(\Bar{\rho}\) such that \(\dist (u, M) < \iota\) in \(\Hset^{n + 1} \setminus \Bset^{\Hset^{n + 1}}_{\Bar{\rho}} (0)\). 
We define then 
\[
 \Tilde{U} (x)
 =\begin{cases}
    \pi_M (u (\pi_{0, \Bar{\rho}} (x))) & \text{if \(x \in \Bset_{\Bar{\rho}} (0)\)}\ ,\\
    u (x) & \text{otherwise}\ ;
  \end{cases}
\]
since \(\Tilde{U} \in W^{1, (n + 1, \infty)} (\Bset^{n + 1}; \Rset^\nu)\) and since
\(\dist (\Tilde{U}, M) < \iota\), we can define \(U = \pi_M \circ \Tilde{U} \in  W^{1, (n + 1, \infty)} (\Bset^{n + 1}; M)\). 
However the estimate satisfied by the map \(U\) is 
\[
 \lambda^{n + 1}
 \bigabs{\bigl\{x \in \Bset^{n + 1} \st \abs{D U (x)} > \lambda \bigr\} }
 \le \frac{C}{(1 - \Bar{\rho})^\frac{1}{n + 1}} \int_{\Sset^{n + 1}} \int_{\Sset^{n + 1}}
 \frac{\abs{u (y) - u (z)}^{n + 1}}{\abs{y - z}^{2 n}} \dif y \dif z\ ,
\]
for every \(\lambda > 0\), 
where the radius \(\Bar{\rho}\) coming from lemma~\ref{lemmaGoodRegion} depends unfortunately on the modulus of integrability of the integrand of the Gagliardo seminorm.
\end{remark}

\subsection{Iterating radial extensions}
The idea to construct the extension is to apply the radial extension of lemma~\ref{lemmaHomogeneousExtension} on good spheres given by lemma~\ref{lemmaGoodRadius}.
In practice, this is more delicate. 
Indeed, good spheres \emph{may overlap} and the radial lemma~\ref{lemmaHomogeneousExtension} cannot then be applied on overlapping balls simultaneously. 
We shall apply thus lemma~\ref{lemmaHomogeneousExtension} sequentially.
A new problem arises, namely after lemma~\ref{lemmaHomogeneousExtension} has been applied at least once, the resulting map is not anymore in the Sobolev space \(W^{1, n + 1} (\Hset^{n + 1}, \Rset^\nu)\).
The next proposition applies lemma~\ref{lemmaHomogeneousExtension} in such a way as to avoid a given set of singularities.

\begin{proposition}[Iterating radial extensions]
\label{propositionOneBall}
Let \(U \in W^{1, 1}_\mathrm{loc} (\Hset^{n + 1}, \Rset^\nu)\), \(S \subset \Hset^{n + 1}\), \(N \subset \Rset^\nu\), \(\rho > 0\) and \(\delta \in (0, \rho)\).
If 
\begin{gather*}
 \# \bigl(S \cap \Bset^{\Hset^{n + 1}}_{3 \rho}(a)\bigr) \le \frac{\rho}{4 \delta}\ ,\\
  D U\in L^{n + 1} \bigl(\Bset^{\Hset^{n + 1}}_{2\rho} (a)\setminus \textstyle\bigcup_{b \in S} \Bset_{\delta}^{\Hset^{n + 1}} (b)\bigr)\ ,
\intertext{and}
  \bigabs{\bigl\{r \in (\rho, 2\rho) \st U (\partial\Bset^{\Hset^{n + 1}}_r(a)) \not\subseteq N \bigr\} }
  \le \frac{\rho}{4}\ ,
\end{gather*}
then there exists a function
\(V \in W^{1, 1}_{\mathrm{loc}} (\Hset^{n + 1};\Rset^\nu)\)
and a set \(R \subset S \cap \Bset_{2 \rho}^{\Hset^{n + 1}}(a)\)
such that
\begin{enumerate}[(i)]
 \item \label{eqPreservationSingularities} for each \(b \in R\), \(\Bset^{\Hset^{n + 1}}_\delta (b) \subset \Bset^{\Hset^{n + 1}}_{2 \rho} (a)\),\hphantom{x}
 \vspace{1em}
 \item \label{equationHomogeneousExtension2}%
~\hfill%
\(
  V \vert_{\Hset^{n + 1} \setminus  \Bset^{\Hset^{n + 1}}_{2 \rho} (a)} = U \vert_{\Hset^{n + 1} \setminus  \Bset^{\Hset^{n + 1}}_{2 \rho} (a)}\ ,
\)
~\hfill \hphantom{x}
\vspace{1em}

\item \label{equationHomogeneousExtension1} for each \(b \in S \setminus R\),
\begin{equation*}
   V \vert_{ \Bset^{\Hset^{n + 1}}_\delta (b)} = U \vert_{ \Bset^{\Hset^{n + 1}}_\delta (b)}\ .
\end{equation*}
\item\label{equationHomogeneousExtensionCloseToN}\hfill
 \(U^{-1} (N) \cup \Bset^{\Hset^{n + 1}}_\rho (a) \subseteq V^{-1} (N)\),\hfill \hphantom{x}
 \vspace{1em}
 \item \label{equationMarcinkiewiczBoundHomogeneousExtension} for each \(\lambda > 0\),
 \begin{multline*}
 \lambda^{n + 1} \mu_{\Hset^{n + 1}} \bigl( \bigl\{ x \in \Bset^{\Hset^{n + 1}}_\delta(a) \st 
 \abs{D V (x)}_{\Hset^{n + 1}} > \lambda \bigr\}\bigr)\\
 \le \frac{4\sinh(2 \rho)}{(n + 1) \rho}\int_{\Bset^{\Hset^{n + 1}}_{2 \rho}(a) \setminus \bigcup_{b \in S}\Bset_\delta^{\Hset^{n + 1}}(b)} \abs{D U}_{\Hset^{n + 1}}^{n + 1}\dif \mu_{\Hset^{n + 1}}\ ,
\end{multline*}
\item \label{equationLebesgueBoundHomogeneousExtension}
 \(\displaystyle\int_{\Bset^{\Hset^{n + 1}}_{2 \rho}(a) \setminus \bigcup_{b \in S\cup\{a\} \setminus R} \Bset^{\Hset^{n + 1}}_\delta (b)} \abs{D V}_{\Hset^{n + 1}}^{n + 1}\dif \mu_{\Hset^{n + 1}}\)\\
 \hspace*{\stretch{1}}\(\displaystyle
  \le  \Bigl(1 + \frac{4 \sinh(2 \rho)}{\rho}\ln\frac{\tanh (\rho)}{\tanh\left(\frac{\delta}{2}\right)}\Bigr)
  \int_{\Bset^{\Hset^{n + 1}}_{2 \rho}(a) \setminus \bigcup_{b \in S} \Bset^{\Hset^{n + 1}}_\delta (b)} \abs{D U}_{\Hset^{n + 1}}^{n + 1}\dif \mu_{\Hset^{n + 1}}\ ,\)
\end{enumerate}
\end{proposition}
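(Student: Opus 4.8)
The plan is to fill the ball \(\Bset^{\Hset^{n + 1}}_{2\rho}(a)\) in one stroke by a single radial extension. Concretely, I would first pick one \emph{admissible radius} \(r \in (\rho, 2\rho)\), meaning that \(\partial \Bset^{\Hset^{n + 1}}_r (a)\) is disjoint from every ball \(\Bset^{\Hset^{n + 1}}_\delta (b)\) with \(b \in S\), that \(U (\partial \Bset^{\Hset^{n + 1}}_r (a)) \subseteq N\), that \(U\vert_{\partial \Bset^{\Hset^{n + 1}}_r (a)} \in W^{1, n + 1}(\partial\Bset^{\Hset^{n+1}}_r(a);\Rset^\nu)\), and that
\[
 \int_{\partial \Bset^{\Hset^{n + 1}}_r (a)} \abs{D U}_{\Hset^{n + 1}}^{n + 1}
 \le \frac{4}{\rho}\, E, \qquad E := \int_{\Bset^{\Hset^{n + 1}}_{2\rho}(a) \setminus \bigcup_{b \in S} \Bset^{\Hset^{n + 1}}_\delta (b)} \abs{D U}_{\Hset^{n + 1}}^{n + 1}\dif \mu_{\Hset^{n + 1}}\ .
\]
I would then set \(V := U \circ \pi_{a, r}\) on \(\Bset^{\Hset^{n + 1}}_r (a)\) and \(V := U\) on \(\Hset^{n + 1} \setminus \Bset^{\Hset^{n + 1}}_r (a)\), and take \(R\) to be the set of those \(b \in S\), \(b \neq a\), for which \(\Bset^{\Hset^{n + 1}}_\delta (b) \subseteq \Bset^{\Hset^{n + 1}}_r (a)\) (so that the centre \(a\) is itself regarded as a, possibly new, singular point and is never placed in \(R\)). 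Since \(U\) is Sobolev in a neighbourhood of the gluing sphere \(\partial\Bset^{\Hset^{n+1}}_r(a)\), which avoids all singular balls, the traces of \(U\) and of \(U\circ\pi_{a,r}\) agree there, and Lemma~\ref{lemmaHomogeneousExtension}\eqref{lemmaHomogeneousWeaklyDifferentiable} makes the point singularity at \(a\) removable; hence \(V \in W^{1, 1}_{\mathrm{loc}}(\Hset^{n + 1}; \Rset^\nu)\).

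The crux is the existence of an admissible \(r\), which I would obtain by measuring bad sets of radii in \((\rho, 2\rho)\). A fixed ball \(\Bset^{\Hset^{n + 1}}_\delta (b)\) meets \(\partial \Bset^{\Hset^{n + 1}}_r (a)\) precisely when \(\abs{d_{\Hset^{n + 1}}(a, b) - r} < \delta\), an interval of radii of length at most \(2\delta\); for this to happen with \(r \in (\rho, 2\rho)\) one needs \(b \in \Bset^{\Hset^{n + 1}}_{3\rho}(a)\) since \(\delta < \rho\), so the cardinality hypothesis bounds the measure of ``blocked'' radii by \(2\delta \cdot \tfrac{\rho}{4\delta} = \tfrac{\rho}{2}\). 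Together with the hypothesis that the set of radii with \(U(\partial\Bset^{\Hset^{n+1}}_r(a)) \not\subseteq N\) has measure at most \(\tfrac{\rho}{4}\), and after discarding the null set of radii for which \(U\vert_{\partial\Bset^{\Hset^{n+1}}_r(a)} \notin W^{1,n+1}\), the set \(\mathcal{A} \subset (\rho, 2\rho)\) of radii satisfying all of the admissibility conditions \emph{except possibly the gradient bound} has measure at least \(\tfrac{\rho}{4}\). For \(r \in \mathcal{A}\) the sphere \(\partial \Bset^{\Hset^{n + 1}}_r (a)\) lies inside \(\Bset^{\Hset^{n + 1}}_{2\rho}(a) \setminus \bigcup_{b \in S} \Bset^{\Hset^{n + 1}}_\delta (b)\), so Fubini's theorem in geodesic polar coordinates around \(a\) gives \(\int_{\mathcal{A}} \bigl(\int_{\partial \Bset^{\Hset^{n + 1}}_r (a)} \abs{D U}_{\Hset^{n + 1}}^{n + 1}\bigr)\dif r \le E\); since \(\abs{\mathcal{A}} \ge \rho/4\), at least one \(r \in \mathcal{A}\) satisfies the displayed gradient bound, hence is admissible.

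It then remains to read off the conclusions, which is essentially bookkeeping with this \(r\). Each \(\Bset^{\Hset^{n + 1}}_\delta (b)\), \(b \in S\), is connected and disjoint from \(\partial\Bset^{\Hset^{n+1}}_r(a)\), so it is either contained in \(\Bset^{\Hset^{n + 1}}_r (a)\) — then \(b \in R\) and, since \(r < 2\rho\), \eqref{eqPreservationSingularities} holds — or disjoint from \(\overline{\Bset^{\Hset^{n + 1}}_r (a)}\), in which case \(V = U\) on it, giving \eqref{equationHomogeneousExtension1}; and \eqref{equationHomogeneousExtension2} follows the same way from \(r < 2\rho\). For \eqref{equationHomogeneousExtensionCloseToN}, every point of \(\Bset^{\Hset^{n + 1}}_\rho (a) \subset \Bset^{\Hset^{n + 1}}_r (a)\) is sent by \(\pi_{a,r}\) to \(\partial\Bset^{\Hset^{n+1}}_r(a)\), where \(U\) takes values in \(N\); and a point outside \(\Bset^{\Hset^{n+1}}_r(a)\) at which \(U \in N\) keeps that value under \(V\), while inside the value of \(V\) is of the form \(U(\pi_{a,r}(\cdot)) \in N\). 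For \eqref{equationMarcinkiewiczBoundHomogeneousExtension}, use \(\Bset^{\Hset^{n + 1}}_\delta (a) \subset \Bset^{\Hset^{n + 1}}_r (a)\), Lemma~\ref{lemmaHomogeneousExtension}\eqref{lemmaHomogeneousWeakEstimate} applied on the ball \(\Bset^{\Hset^{n+1}}_r(a)\), \(\sinh r \le \sinh(2\rho)\), and the gradient bound on \(\int_{\partial\Bset^{\Hset^{n+1}}_r(a)}\abs{DU}^{n+1}_{\Hset^{n+1}}\). For \eqref{equationLebesgueBoundHomogeneousExtension}, split the region into its part outside \(\Bset^{\Hset^{n + 1}}_r (a)\), where \(V = U\) and which is contained in \(\Bset^{\Hset^{n + 1}}_{2\rho}(a) \setminus \bigcup_{b \in S} \Bset^{\Hset^{n + 1}}_\delta (b)\), hence contributes at most \(E\); and its part inside \(\Bset^{\Hset^{n + 1}}_r (a)\), which, the other excluded balls being disjoint from \(\Bset^{\Hset^{n+1}}_r(a)\), is exactly the hyperbolic annulus \(\Bset^{\Hset^{n + 1}}_r (a) \setminus \Bset^{\Hset^{n + 1}}_\delta (a)\), on which Lemma~\ref{lemmaHomogeneousExtension}\eqref{lemmaHomogeneousNonSingular} together with \(\sinh r \le \sinh(2\rho)\), \(r < 2\rho\) and the gradient bound contributes at most \(\tfrac{4\sinh(2\rho)}{\rho}\ln\tfrac{\tanh\rho}{\tanh(\delta/2)}\, E\); adding the two parts yields the stated estimate.

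The main obstacle is really the calibration in the radius-selection step: the three budgets \(\tfrac{\rho}{4\delta}\), \(\tfrac{\rho}{4}\) and the factor \(4\) appearing in \eqref{equationMarcinkiewiczBoundHomogeneousExtension}--\eqref{equationLebesgueBoundHomogeneousExtension} are exactly tuned so that the admissible set still has measure at least \(\tfrac{\rho}{4}\), which is what lets the averaging argument produce the gradient bound; anything looser would break the constants in the last two conclusions. A secondary point requiring care is that \(V\) is genuinely weakly differentiable both across \(\partial\Bset^{\Hset^{n+1}}_r(a)\) and at \(a\) — the former because \(U\) is Sobolev near the gluing sphere, the latter because the point \(a\) is removable by Lemma~\ref{lemmaHomogeneousExtension}\eqref{lemmaHomogeneousWeaklyDifferentiable}.
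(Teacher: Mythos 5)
Your proposal is correct and follows essentially the same approach as the paper's proof: choose a good radius $r\in(\rho,2\rho)$ by discounting radii blocked by singular balls, radii where the sphere's image escapes $N$, and radii with too much boundary energy, then perform a single radial extension on $\Bset^{\Hset^{n+1}}_r(a)$ and read off the conclusions from Lemma~\ref{lemmaHomogeneousExtension}. The only (harmless) cosmetic difference is that you explicitly exclude the centre $a$ from $R$, whereas the paper does not; in the paper's application $a$ never belongs to $S$, so both formulations yield the same object.
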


As illustrated on figure \ref{figureIterateRadial}, the new map \(V\) 
coincides with the map \(U\) inside \(\mathbb B_{2\rho}^{\mathbb H^{n+1}}(a)\), 
and out of the preexisting singularities \(S\) of \(u\), those in \(S \setminus R\) are erased, whereas those in \(R\) are preserved  \eqref{equationHomogeneousExtension2}.
Moreover the map \(V\) has by \eqref{equationHomogeneousExtensionCloseToN} a larger good set in which it takes values in \(N\) than \(U\), at the price of a singularity created at the point \(a\), controlled in the critical Sobolev--Marcinkiewicz space \eqref{equationMarcinkiewiczBoundHomogeneousExtension}.
The Sobolev norm is controlled away from the new set of singularities \(R \cup \{a\}\) \eqref{equationLebesgueBoundHomogeneousExtension}.

\begin{figure}

\includegraphics{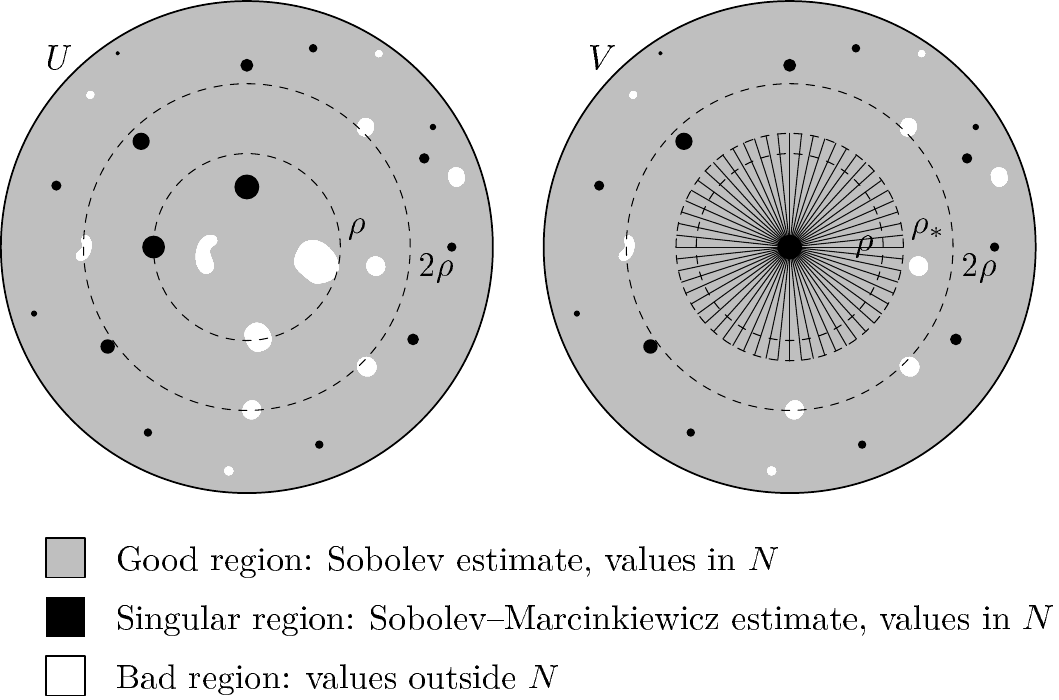}
  \caption{
  Proposition~\ref{propositionOneBall} transforms the map \(U\) on the left
  into the map \(V\) on the right by propagating by radial extension the values on a sphere of radius \(\rho_*\)
  in which \(U\) takes good values and is controlled in the Sobolev space,
  ensuring that the resulting function takes value into \(N\) inside the ball of radius \(\rho\) and 
  is controlled in Sobolev norm outside a small ball in which the singularity is also controlled.}
  \label{figureIterateRadial}
\end{figure}

\begin{proof}[Proof of proposition~\ref{propositionOneBall}]
We define the subset of real numbers
\begin{multline*}
    \mathcal R = \bigl\{ r \in (\rho, 2 \rho) \st U (\partial \Bset^{\Hset^{n + 1}}_r (a)) \subseteq N \\ \text{and for each \(b \in S\), } \partial\Bset^{\Hset^{n + 1}}_r (a) \cap \Bset_\delta (b) = \emptyset \bigr\}.
\end{multline*}
By the assumptions of the proposition, we have immediately
\[
  \abs{\mathcal R} \ge \rho - \frac{\rho}{4} - 2\delta\, \# S \ge \frac{\rho}{4}.
\]
By Fubini's theorem in the Sobolev space, for almost every \(r \in \mathcal R\), the map \(U\) coincides almost everywhere with its trace on the sphere \(\partial \Bset^{\Hset^{n + 1}}_r (a)\),
one has \(U \in W^{1, n + 1} (\partial \Bset^{\Hset^{n + 1}}_r (a))\),
\(D U\) is the weak derivative of \(U\) on \(\partial \Bset^{\Hset^{n + 1}}_r (a)\) and 
\[
  \int_{\mathcal R} \int_{\partial \Bset^{\Hset^{n + 1}}_r (a)} \abs{D U}^{n + 1} \dif r\le \int_{\Bset^{\Hset^{n + 1}}_{2 \rho} (a) \setminus \bigcup_{b \in S} \Bset^{\Hset^{n + 1}}_\delta (b)} \abs{D U}^{n + 1}\ .
\]
In particular, there exists \(\rho_* \in \mathcal R\) such that 
\begin{equation}\label{equationHomogeneousExtensionGoodBoundaryEnergy}
\begin{split}
  \int_{\partial \Bset^{\Hset^{n + 1}}_{\rho_*} (a)} \abs{D U}^{n + 1} &\le \frac{1}{\abs{\mathcal R}} \int_{\Bset^{\Hset^{n + 1}}_{2 \rho} (a) \setminus \bigcup_{b \in S} \Bset^{\Hset^{n + 1}}_\delta (b)} \abs{D U}^{n + 1} \\
  &\le \frac{4}{\rho} \int_{\Bset^{\Hset^{n + 1}}_{2 \rho} (a) \setminus \bigcup_{b \in S} \Bset^{\Hset^{n + 1}}_\delta (b)} \abs{D U}^{n + 1}\ .
\end{split}
\end{equation}
We then define the map \(V : \Hset^{n + 1}_{2 \rho}(a) \to \Rset^\nu\)
for each \(x \in \Hset^{n + 1}_{2 \rho}(a) \setminus \{a\}\) by 
\[
 V (x) 
 =
 \begin{cases}
  U (x) & \text{if \(x \in \Hset^{n + 1} \setminus \Bset^{\Hset^{n + 1}}_{\rho_*}(a)\)}\ ,\\
  U (\pi_{a, \rho} (x)) & \text{if \(x \in \Bset^{\Hset^{n + 1}}_{\rho_*}(a) \setminus \{a\}\)}\ ,
 \end{cases}
\]
where \(\pi_{a, \rho}\) is the nearest point projection from \(\Bset^{\Hset^{n + 1}}_{\rho_*}(a) \setminus \{a\}\) to \(\partial \Bset^{\Hset^{n + 1}}_{\rho_*}(a)\) as in the statement of  lemma~\ref{lemmaHomogeneousExtension}. 
By lemma~\ref{lemmaHomogeneousExtension} \eqref{lemmaHomogeneousWeaklyDifferentiable}, the map \(V\) is weakly differentiable. Since \(\rho_* \le 2 \rho\), the conclusion \eqref{equationHomogeneousExtension2} holds immediately. If we define
\[
  R = \bigl\{ b \in S \st \Bset^{\Hset^{n + 1}}_\delta (b) \subset \Bset^{\Hset^{n + 1}}_{\rho_*}  (a) \bigr\},
\]
then \eqref{eqPreservationSingularities} also holds directly. If \(b \in S \setminus R\), then, since \(\rho_* \in \mathcal R\), we have 
\[
 \Bset^{\Hset^{n + 1}}_\delta (b) \cap 
 \Bset^{\Hset^{n + 1}}_{\rho_*} (a)=\emptyset,
\]
so that \eqref{equationHomogeneousExtension1} follows again directly from the definition of \(V\). By definition of the set \(\mathcal R\), \(U (\partial \Bset^{\Hset^{n + 1}}_{\rho_*}(a)) \subset N\), and thus \(V (\Bset^{\Hset^{n + 1}}_{\rho_*}(a)) \subset N\). 
Since \(U = V\) in \(\Hset^{n + 1} \setminus \Bset^{\Hset^{n + 1}}_{\rho_*}(a)\), we have 
\[
  V^{-1} (N) = U^{-1} (N) \cup \Bset^{\Hset^{n + 1}}_{\rho_*} (a)
  \supseteq U^{-1} (N) \cup \Bset^{\Hset^{n + 1}}_{\rho} (a)\ ,
\]
and \eqref{equationHomogeneousExtensionCloseToN} is proved. From lemma~\ref{lemmaHomogeneousExtension} \eqref{lemmaHomogeneousWeakEstimate} applied with the radius \(\rho_*\)  and by  the bound \eqref{equationHomogeneousExtensionGoodBoundaryEnergy}, we obtain then for every \(\lambda > 0\)
\[
\begin{split}
  \lambda^{n + 1}\mu_{\Hset^{n + 1}} \bigl(\bigl\{ x \in & \Bset^{\Hset_{n + 1}}_{\delta} (a)  \st \abs{D V (x)}_{\Hset^{n + 1}} > \lambda \bigr\}\bigr)\\
  &\le \lambda^{n + 1}\mu_{\Hset^{n + 1}} \bigl(\bigl\{ x \in \Bset^{\Hset_{n + 1}}_{\rho_*} (a) \st \abs{D (U \circ \pi_{a, \rho}) (x)}_{\Hset^{n + 1}} > \lambda \bigr\}\bigr)\\
    &\le\frac{\sinh(\rho_*)}{n+1}\int_{\partial \Bset_{\rho_*}^{\Hset^{n + 1}}(a)} \abs{D U}^{n + 1}_{\Hset^{n + 1}} \dif \mu_{\Hset^{n + 1}}\\
  &\le \frac{4\sinh(2 \rho)}{(n+1)\rho}  \int_{\Bset^{\Hset^{n + 1}}_{2 \rho} (a) \setminus \bigcup_{b \in S} \Bset^{\Hset^{n + 1}}_\delta (b)} \abs{D U}^{n + 1}_{\Hset^{n + 1}} \dif \mu_{\Hset^{n + 1}},
\end{split}
\]
which gives the estimate \eqref{equationMarcinkiewiczBoundHomogeneousExtension}. For the remaining conclusion \eqref{equationLebesgueBoundHomogeneousExtension}, we observe that 
\[
 \Bset_{2\rho}^{\Hset^{n + 1}}(a)\setminus \Bset_{\rho_*}^{\Hset^{n + 1}}(a)
 \setminus \bigcup_{b \in S \setminus R}\Bset_\delta^{\Hset^{n + 1}}(b)
 = \Bset_{2\rho}^{\Hset^{n + 1}}(a)\setminus \Bset_{\rho_*}^{\Hset^{n + 1}}(a)
 \setminus \bigcup_{b \in S}\Bset_\delta^{\Hset^{n + 1}}(b)\ ,
\]
and thus 
\begin{multline}
\label{equationLebesgueBoundHomogeneousExtensionOuter}
 \int_{\Bset_{2\rho}^{\Hset^{n + 1}}(a)\setminus \Bset_{\rho_*}^{\Hset^{n + 1}}(a)
 \setminus \bigcup_{b \in S \setminus R}\Bset_\delta^{\Hset^{n + 1}}(b)} \abs{D V}_{\Hset^{n + 1}}^{n + 1}\dif \mu_{\Hset^{n + 1}} \\
 =\int_{\Bset_{2\rho}^{\Hset^{n + 1}}(a)\setminus \Bset_{\rho_*}^{\Hset^{n + 1}}(a)
 \setminus \bigcup_{b \in S}\Bset_\delta^{\Hset^{n + 1}}(b)} \abs{D U}_{\Hset^{n + 1}}^{n + 1} \dif \mu_{\Hset^{n + 1}} \ .
\end{multline}
In \(\Bset_{\rho_*}^{\Hset^{n + 1}}(a) \setminus \Bset_{\delta}^{\Hset^{n + 1}}(a)\), we apply lemma~\ref{lemmaHomogeneousExtension} \eqref{lemmaHomogeneousNonSingular} 
with the larger radius \(\rho_*\) and smaller radius \(\delta\) and then \eqref{equationHomogeneousExtensionGoodBoundaryEnergy} to obtain,
\begin{equation}\label{equationLebesgueBoundHomogeneousExtensionInner}
\begin{split}
 \int_{\Bset_{\rho_*}^{\Hset^{n + 1}}(a) \setminus \Bset_{\delta}^{\Hset^{n + 1}}(a)}
 &\abs{D V}_{\Hset^{n + 1}}^{n + 1} \dif \mu_{\Hset^{n + 1}}
  = \int_{\Bset_{\rho_*}^{\Hset^{n + 1}}(a) \setminus \Bset_{\delta}^{\Hset^{n + 1}}(a)}
 \abs{D V}_{\Hset^{n + 1}}^{n + 1} \dif \mu_{\Hset^{n + 1}}\\
 & = \sinh(\rho_*)\ln\frac{\tanh\left(\frac{\rho_*}{2}\right)}{\tanh\left(\frac{\delta}{2}\right)}\int_{\partial \Bset_{\rho_*}^{\Hset^{n + 1}}(a)} \abs{D U}_{\Hset^{n + 1}}^{n + 1} \dif \mu_{\Hset^{n + 1}}\\
 & \le \frac{4 \sinh(2 \rho)}{\rho} \ln\frac{\tanh (\rho) }{\tanh\left(\frac{\delta}{2}\right)}\int_{\Bset^{\Hset^{n + 1}}_{2 \rho} (a) \setminus \bigcup_{b \in S} \Bset^{\Hset^{n + 1}}_\delta (b)} \abs{D U}_{\Hset^{n + 1}}^{n + 1}\dif \mu_{\Hset^{n + 1}}\ .
\end{split}
\end{equation}
The conclusion \eqref{equationLebesgueBoundHomogeneousExtension} follows then from \eqref{equationLebesgueBoundHomogeneousExtensionOuter}
and \eqref{equationLebesgueBoundHomogeneousExtensionInner}.
\end{proof}

\section{Construction of the singular extension}
\label{sectionMainProof}

The last tool that we shall use is a covering lemma for the hyperbolic space.
\begin{lemma}[Covering of the hyperbolic space]
\label{lemmaCovering}
There exists a collection of points \(A \subset \Hset^n\) such that 
\[
  \Hset^{n + 1} = \bigcup_{a \in A} \Bset^{\Hset^{n + 1}}_\rho (a)\quad\text{and the balls}\quad\Bset^{\Hset^{n + 1}}_{\rho/2} (a),\ a\in \Hset^{n+1} \text{ are disjoint}\ .
\]
Moreover, for every \(a \in A\) and \(\sigma \in (\rho, \infty)\)
\[
  \# \bigl\{ b \in A \st b \in \Bset^{\Hset^{n + 1}}_{\sigma} (a)\bigr\} \le \frac{\int_0^{\sigma + \frac{\rho}{2}} (\sinh r)^n \dif r }{\int_0^{\frac{\rho}{2}} (\sinh r)^n \dif r}.
\]
\end{lemma}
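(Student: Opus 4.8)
The plan is to run the standard maximal packing argument with respect to the hyperbolic metric. First I would invoke Zorn's lemma to obtain a set \(A \subseteq \Hset^{n + 1}\) that is maximal among the subsets \(A'\) for which the balls \(\Bset^{\Hset^{n + 1}}_{\rho/2}(a)\), \(a \in A'\), are pairwise disjoint; here one uses that the union of a chain of such admissible families is again admissible, disjointness being a condition on pairs. Equivalently, \(A\) is a maximal subset of \(\Hset^{n + 1}\) with \(d_{\Hset^{n + 1}}(a, b) \ge \rho\) for all distinct \(a, b \in A\). The disjointness of the half-radius balls is then built into the construction.

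The covering property follows from maximality: if some \(x \in \Hset^{n + 1}\) lay in no ball \(\Bset^{\Hset^{n + 1}}_\rho(a)\), then \(d_{\Hset^{n + 1}}(x, a) \ge \rho\) for every \(a \in A\), so \(\Bset^{\Hset^{n + 1}}_{\rho/2}(x)\) would be disjoint from every \(\Bset^{\Hset^{n + 1}}_{\rho/2}(a)\), and \(A \cup \{x\}\) would contradict the maximality of \(A\). Hence \(\Hset^{n + 1} = \bigcup_{a \in A} \Bset^{\Hset^{n + 1}}_\rho(a)\).

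For the counting estimate, I would fix \(a \in A\) and \(\sigma > \rho\). If \(b \in A \cap \Bset^{\Hset^{n + 1}}_\sigma(a)\), then by the triangle inequality for \(d_{\Hset^{n + 1}}\) one has \(\Bset^{\Hset^{n + 1}}_{\rho/2}(b) \subseteq \Bset^{\Hset^{n + 1}}_{\sigma + \rho/2}(a)\). Since the balls \(\Bset^{\Hset^{n + 1}}_{\rho/2}(b)\), \(b \in A\), are pairwise disjoint and \(\mu_{\Hset^{n + 1}}\) is invariant under the isometry group \(\mobius (\Bset^{n + 1})\) — so that the measure of a hyperbolic ball depends only on its radius, say \(\mu_{\Hset^{n + 1}}(\Bset^{\Hset^{n + 1}}_r(c)) = v(r)\) — summing volumes gives
\[
 \# \bigl(A \cap \Bset^{\Hset^{n + 1}}_\sigma(a)\bigr)\, v\bigl(\tfrac{\rho}{2}\bigr) \le v\bigl(\sigma + \tfrac{\rho}{2}\bigr)\ .
\]
It then remains to compute \(v(r)\). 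In geodesic polar coordinates around the center the hyperbolic metric of curvature \(-1\) reads \(\dif t^2 + (\sinh t)^2 g_{\Sset^n}\), with \(g_{\Sset^n}\) the round metric on the unit sphere; alternatively one computes directly from \eqref{eqHyperbolicMeasure} by passing to polar coordinates in the Poincar\'e ball and substituting \(\abs{x} = \tanh(t/2)\), which turns the weight into \((\sinh t)^n\). Either way one obtains \(v(r) = \abs{\Sset^n}\int_0^r (\sinh t)^n \dif t\). Substituting this into the displayed inequality and cancelling the factor \(\abs{\Sset^n}\) yields the claim.

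The argument has no genuinely delicate step: the only points needing a moment's care are the existence of the maximal packing via Zorn's lemma and the elementary identification of the hyperbolic ball volume \(v(r)\), which is where the \(\sinh\) weight in the statement comes from.
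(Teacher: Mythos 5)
Your proof is correct and follows essentially the same route as the paper's: a maximal disjoint family of half-radius balls (maximality via Zorn's lemma), the covering from maximality, the inclusion $\Bset^{\Hset^{n+1}}_{\rho/2}(b) \subset \Bset^{\Hset^{n+1}}_{\sigma+\rho/2}(a)$ from the triangle inequality, and the counting bound from disjointness together with the hyperbolic ball volume $\abs{\Sset^n}\int_0^\tau(\sinh r)^n\dif r$. The only cosmetic difference is that you sketch the volume computation via geodesic polar coordinates as an alternative, while the paper does it directly from the Poincar\'e-ball formula \eqref{eqHyperbolicMeasure} with the substitution $\abs{x}=\tanh(\tau/2)$ — the same substitution you also mention.
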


This lemma is a weak form of the Besicovitch covering theorem for the hyperbolic space; 
however, due to the hyperbolic geometry, the number of balls covering a given point 
not only depends on the dimension of the space but also exponentially on the radius \(\rho\).

\begin{proof}%
[Proof of lemma~\ref{lemmaCovering}]%
Let \(\{\Bset^{\Hset^{n + 1}}_{\rho/2} (a)\}_{a \in A}\) be a maximal family of disjoint hyperbolic open balls in \(\Hset^{n + 1}\). 
Then if \(x \in \Hset^{n + 1} \setminus A\), we have 
\[
  \Bset^{\Hset^{n + 1}}_{\rho/2} (x) \cap  \Bset^{\Hset^{n + 1}}_{\rho/2} (a) \ne \emptyset\ ,
\]
and hence 
\[
 x \in \Bset^{\Hset^{n + 1}}_{\rho} (a)\ .
\]
Therefore, we have
\[
  \Hset^{n + 1}\subset \bigcup_{a \in A} \Bset^{\Hset^{n + 1}}_\rho (a)\ .
\]
Moreover if \(b \in A\) and \(b \in \Bset^{\Hset^{n + 1}}_{\sigma} (a)\), then 
\[
 \Bset^{\Hset^{n + 1}}_{\rho/2} (b) \subset \Bset^{\Hset^{n + 1}}_{\sigma + \rho/2} (a)\ .
\]
We have thus
\[
  \bigcup_{b \in A \cap \Bset^{\Hset^{n + 1}}_{\sigma} (a)} \Bset^{\Hset^{n + 1}}_{\rho/2} (b)
  \subset \Bset^{\Hset^{n + 1}}_{\sigma + \rho /2} (a)\ ,
\]
so that, since the balls in the collection are disjoint,  
\[
 \# \bigl \{ b \in A \st b \in \Bset^{\Hset^{n + 1}}_{\sigma} (a)\bigr\}\ 
 \mu_{\Hset^{n + 1}} \bigl(\Bset^{\Hset^{n + 1}}_{\rho/2} (a)\bigr)
 \le \mu_{\Hset^{n + 1}} \bigl(\Bset^{\Hset^{n + 1}}_{\sigma + \rho/2} (a)\bigr)\ ,
\]
from which the desired estimate follows since for each \(\tau \in (0, \infty)\),
\[
 \mu_{\Hset^{n + 1}} \bigl(\Bset^{\Hset^{n + 1}}_{\tau} (b)\bigr)
 =\abs{\Sset^{n}} \int_0^{\tanh \frac{\tau}{2}} \frac{2^{n + 1} s^{n}}{(1 - s^2)^{n + 1}}\dif s
 =\abs{\Sset^{n}} \int_0^\tau (\sinh r)^n \dif r\ .\qedhere
\]
% The proof of the second estimate is similar.
% \[
%   \bigcup_{b \in A \cap \Bset^{\Hset^{n + 1}}_{3\rho} (a)} \Bset^{\Hset^{n + 1}}_{\rho/2} (b)
%   \subset \Bset^{\Hset^{n + 1}}_{7 \rho /2} (a),
% \]
% so that the bounds follow by integration.
\end{proof}

We have now all the tools to prove the main result of the present work.

\begin{proof}%
[Proof of theorem~\ref{theoremMain}]
Let \(u \in W^{n/(n + 1), n + 1} (\Sset^n; M)\).

\step{Step 1. Hyperharmonic extension.}
Let \(\mathfrak{H} u\) be given by the hyperharmonic extension (see \eqref{equationConformalExtension}).
By proposition~\ref{propositionConformalExtension}, \(\mathfrak{H}u \in W^{1, n + 1} (\Bset^{n + 1}; \Rset^\nu)\) admits \(u\) as a trace on \(\partial \Bset^{n + 1}\).

\step{Step 2. Choosing a scale with many good radii.} 
Since the manifold \(M\) is embedded as a compact subset of \(\Rset^\nu\), there exists \(\iota > 0\) and a Lipschitz retraction \(\pi_M : N \to M\), where
\[
 N = \bigl\{ y \in \Rset^\nu \st \dist (y, M) < \iota\bigr\}\ .
\]
By the classical Chebyshev inequality and the good radii estimate (proposition~\ref{propositionGoodRadiusFractional}), we have 
\[
\begin{split}
 \bigabs{\bigl\{ r \in (0, 2 \rho) \st \mathfrak{H}u (\partial \Bset_r^{\Hset^{n + 1}}(a)) \not \subseteq N\bigr\}}
 \le \frac{1}{\iota} \int_0^{2\rho}  \norm{\dist (\mathfrak{H} u, u (\Sset^n))}_{L^\infty (\partial\Bset^{\Hset^{n + 1}}_r(a))}\dif r\\
 \le \frac{C_1 2\rho}{\iota(1 + (2\rho)^\frac{1}{n + 1})} \left(\int_{\Sset^n} \int_{\Sset^n} \frac{\abs{u (y) - u (z)}^{n + 1}}{\abs{y - z}^{2 n}}\dif y \dif z \right)^\frac{1}{n + 1} .
\end{split}
\]
We take 
\begin{equation}
\label{eqDefinitionRho}
  \rho = \frac{1}{2} \Bigl(\frac{8 C_1}{\iota}\Bigr)^{n+1} \int_{\Sset^n} \int_{\Sset^n} \frac{\abs{u (y) - u (z)}^{n + 1}}{\abs{y - z}^{2 n}}\dif y \dif z\ .
\end{equation}
so that, for every \(a \in \Hset^n\),
\begin{equation}
\label{condonrho}
\bigabs{\bigl\{ r \in (\rho, 2 \rho) \st \mathfrak{H}u (\partial \Bset_r^{\Hset^{n + 1}}(a)) \not \subset N\bigr\}} \le \frac{\rho}{4}\ .
\end{equation}

\step{Step 3. Fixing a good covering.} 
By lemma~\ref{lemmaGoodRegion} \eqref{itemAsymptoticBehaviour}, there exists a hyperbolic ball \(\Bset_{\Bar{\rho}}^{\Hset^{n + 1}} (0)\) such that for every \(x \in \Hset^{n + 1} \setminus \Bset_{\Bar{\rho}}^{\Hset^{n + 1}} (0)\),
\[
  \dist \bigl(\mathfrak{H}u (x), u (\Sset^n)\bigr) < \iota\ .
\]
Let \(A \subset \Hset^{n + 1}\) be the collection of centers of balls given lemma~\ref{lemmaCovering} and let \(\Tilde{S} = A \cap \Bset^{\Hset^{n + 1}}_{\Bar{\rho} + \rho} (0)\).

In view of the conclusion of lemma~\ref{lemmaCovering}, we can partition \(\Tilde S\) like in the proof of the classical Besicovitch covering lemma and write 
\[
 \Tilde{S} = \bigcup_{j = 1}^Q \Tilde{S}_j\ ,
\]
with
\begin{equation}
\label{ineqQ}
  Q \le \frac{\int_0^{9 \rho/2} (\sinh r)^n \dif r }{\int_0^{\rho/2} (\sinh r)^n \dif r}\ ,
\end{equation}
so that if \(j \in \{1, \dotsc, Q\}\), \(a, b \in \Tilde{S}_j\) and \(a \ne b\), 
then \(\Bset^{\Hset^{n + 1}}_{2\rho} (a) \cap \Bset^{\Hset^{n + 1}}_{2\rho} (b) = \emptyset\), 
or equivalently, \(b \not \in \Bset^{\Hset^{n + 1}}_{4\rho} (a)\).

\step{Step 4. Improving the function iteratively on balls.}
In view of proposition~\ref{propositionOneBall}, we fix
\begin{equation}
\label{ineqdelta}
  \delta =  \min\biggl(\frac{\rho\int_0^{\rho/2} (\sinh r)^n \dif r}{4\int_0^{5 \rho/2} (\sinh r)^n \dif r }, \frac{\rho}{2}\biggr)\ .
\end{equation}

By iteratively applying proposition~\ref{propositionOneBall}, we shall construct inductively maps \(U_0, \dotsc, U_Q \in W^{1, 1}_{\mathrm{loc}} (\Hset^{n + 1}; \Rset^\nu)\) and sets \(S_0, \dotsc, S_Q \subseteq \Tilde{S}\), such that
we have for \(q=1,\ldots,Q\)
\begin{enumerate}[(a)]
\vspace{1ex}
 \item \label{iterateGoodRegion} \(U_q^{-1}(N) \supseteq (\mathfrak{H}u)^{-1}(N) \cup \bigcup_{j = 1}^q \bigcup_{a \in \Tilde{S}_q} \Bset^{\Hset^{n + 1}}_{\rho} (a)\ \),
 \vspace{1ex}
 \item \label{iterateSingularEstimate} \(\displaystyle \sum_{a \in S_q} \norm{D U_q}_{L^{n + 1, \infty} (\Bset^{\Hset^{n + 1}}_{\delta} (a))}^{n + 1}
 \le \eta \frac{\kappa^{q} - 1}{\kappa - 1} \int_{\Hset^{n + 1}} \abs{D\mathfrak{H} u}^{n + 1} \dif \mu_{\Hset^{n + 1}}\ , \)\\
 \vspace{1ex}
 where
 \[
  \norm{D U_q}_{L^{n + 1, \infty} (\Bset^{\Hset^{n + 1}}_{\delta} (a))}^{n + 1}
  =\sup_{\lambda > 0} \lambda^{n + 1} \mu_{\Hset^{n + 1}} \bigl(\{x \in \Bset^{\Hset^{n + 1}}_{\delta} (a)) \st  \abs{D U_q (x)}_{\Hset^{n + 1}} > \lambda\} \bigr)\ ,
 \]
 \item \label{iterateRegularEstimate} \(\displaystyle
 \int_{\Hset^{n + 1} \setminus \bigcup_{a \in S_q} \Bset^{\Hset^{n + 1}}_{\delta} (a)} \abs{D U_q}_{\Hset^{n + 1}}^{n + 1} \le \kappa^q \int_{\Hset^{n + 1}} \abs{D\mathfrak{H} u}^{n + 1} \dif \mu_{\Hset^{n + 1}}\ ,\)
 \vspace{1ex}
 \item \(U_q = u\) on \(\partial \Bset^{n + 1}\) in the sense of traces,
\end{enumerate}
where 
\begin{align}
\label{equationkappagamma}
  \kappa &= 1 + \frac{4\sinh (2 \rho)}{\rho} \ln \frac{\tanh (\rho)}{\tanh \frac{\delta}{2}}&
  &\text{ and }&
  \eta = \frac{4\sinh (2\rho)}{\rho}\ .
\end{align}

We first set \(U_0 = \mathfrak{H} (u)\) and \(S_0 = \emptyset\) and we observe immediately that \(U_0\) satisfies all the properties.

We now assume that \(q \in \{0, \dotsc, Q - 1\}\) and that \(U_0, \dotsc, U_{q}\) and \(S_0, \dotsc, S_{q}\) have already been constructed.
By \eqref{iterateGoodRegion} and by \eqref{condonrho}, we have, for each \(a \in \Tilde{S}_{q + 1}\),
\begin{multline*}
\bigabs{\bigl\{ r \in (\rho, 2 \rho) \st U_q (\partial \Bset_r^{\Hset^{n + 1}}(a)) \not \subseteq N\bigr\}}\\
\le 
\bigabs{\bigl\{ r \in (\rho, 2 \rho) \st \mathfrak{H}u (\partial \Bset_r^{\Hset^{n + 1}}(a)) \not \subseteq N\bigr\}} \le \frac{\rho}{4}\ . 
\end{multline*}
Moreover, since \(S_q \subseteq \Tilde{S}\), we have for each \(a \in \Tilde{S}_{q + 1}\), in view of the construction of \(A\) by lemma~\ref{lemmaCovering} and the definition of \(\delta\),
\[
\begin{split}
 \# (S_{q} \cap \Bset^{\Hset^{n + 1}}_{2 \rho} (a))
 &\le \# (\Tilde{S} \cap \Bset^{\Hset^{n + 1}}_{2 \rho} (a))
 \le \# (A \cap \Bset^{\Hset^{n + 1}}_{2 \rho} (a))\\
 &\le \frac{\int_0^{5 \rho/2} (\sinh r)^n \dif r }{\int_0^{\rho/2} (\sinh r)^n \dif r}
 \le \frac{ \rho}{4\delta}\ .
\end{split}
\]
For each \(a \in \Tilde{S}_q\), we apply proposition~\ref{propositionOneBall} to the map \(U_q\) with the singularity set \(S_{q}\), and we denote the resulting map \(U_{q + 1}^a\) and the associated set \(R_{q + 1}^a\).
We define now \(U_{q + 1} : \Hset^{n + 1} \to \Rset^\nu\) for \(x \in \Hset^{n + 1}\) by 
\[
 U_{q + 1} (x)
 =
 \begin{cases}
    U_{q + 1}^a (x) & \text{if \(x \in \Bset^{\Hset^{n + 1}}_{2 \rho} (a)\) for \(a \in \Tilde{S}_{q + 1}\)}\ ,\\
    U_{q} (x) & \text{otherwise}\ ,
 \end{cases}
\]
and 
\[
 S_{q + 1} = S_{q} \cup \Tilde{S}_{q + 1} \setminus \bigcup_{a \in \Tilde{S}_{q + 1}}R_{q + 1}^a.
\]

By the disjointness of the family \(\{\Bset^{\Hset^{n + 1}}_{2 \rho} (a)\}_{a \in S_q}\), the map \(U_q\) is well-defined. Moreover, by proposition~\ref{propositionOneBall} \eqref{equationHomogeneousExtensionCloseToN},  \(U_q \in W^{1, 1}_{\mathrm{loc}} (\Hset^{n + 1}; \Rset^\nu)\).

By proposition~\ref{propositionOneBall} \eqref{equationHomogeneousExtensionCloseToN} and by our induction assumption \eqref{iterateGoodRegion}, we have 
\[
  U_{q + 1}^{-1} (N) \supseteq U_{q}^{-1} (N) \cup \bigcup_{a \in \Tilde{S}_{q + 1}}
  \Bset^{\Hset^{n + 1}}_{\rho} (a)
  \supseteq
  (\mathfrak{H}u)^{-1}(N) \cup \bigcup_{j = 1}^{q + 1} \bigcup_{a \in \Tilde{S}_q} \Bset^{\Hset^{n + 1}}_{\rho} (a)
\]
so that \eqref{iterateGoodRegion} holds for \(q + 1\).

For each \(a \in \Tilde{S}_{q + 1}\),  we have by proposition~\ref{propositionOneBall} \eqref{equationMarcinkiewiczBoundHomogeneousExtension},
\[
\begin{split}
  \norm{D U_{q + 1}}_{L^{n + 1, \infty}(\Bset^{\Hset^{n + 1}}_\delta(a))}^{n+1} 
  \le \eta \int_{\Bset^{\Hset^{n + 1}}_{2 \rho}(a) \setminus \bigcup_{b \in S_q}\Bset_\delta^{\Hset^{n + 1}}(b)} \abs{D U_q}_{\Hset^{n + 1}}^{n + 1} \dif \mu_{\Hset^{n + 1}}\ ,
\end{split}
\]
and therefore, by the induction assumption \eqref{iterateRegularEstimate}
\begin{equation}
\label{ineqIterateSingularEstimateNew}
\begin{split}
 \sum_{a \in S_{q + 1}}
 \norm{D U_{q + 1}}_{L^{n + 1, \infty}(\Bset^{\Hset^{n + 1}}_\delta(a))}^{n+1}
 &\le \sum_{a \in S_{q + 1}} \eta \int_{\Bset^{\Hset^{n + 1}}_{2 \rho}(a) \setminus \bigcup_{b \in S_q}\Bset_\delta^{\Hset^{n + 1}}(b)}\hspace{-2em} \abs{D U_q}^{n + 1}_{\Hset^{n + 1}}\dif \mu_{\Hset^{n + 1}}\\
 &\le \eta \kappa^{q - 1} \int_{\Hset^{n + 1}} \abs{D \mathfrak{H} u}_{\Hset^{n + 1}}^{n + 1}\dif \mu_{\Hset^{n + 1}}\ .
\end{split}
\end{equation}
On the other hand, by  proposition~\ref{propositionOneBall} \eqref{equationHomogeneousExtension1} for every \(b \in S_{q + 1} \setminus \Tilde{S}_{q + 1}\), we have 
\begin{equation*}
   U_{q + 1} \vert_{ \Bset^{\Hset^{n + 1}}_\delta (b)} = U_q \vert_{ \Bset^{\Hset^{n + 1}}_\delta (b)}.
\end{equation*}
It follows thus by the induction assumption \eqref{iterateSingularEstimate} that 
\begin{multline}
\label{ineqIterateSingularEstimateOld}
 \sum_{a \in S_{q + 1}\setminus \Tilde{S}_{q + 1}}
 \norm{D U_{q + 1}}_{L^{n + 1, \infty}(\Bset^{\Hset^{n + 1}}_\delta (a))}^{n+1}\\
 \le \eta \frac{\kappa^{q - 1} - 1}{\kappa - 1} \int_{\Bset^{\Hset^{n + 1}}_{2 \rho}(a) \setminus \bigcup_{b \in S_q}\Bset_\delta^{\Hset^{n + 1}}(b)} \abs{D U_q}_{\Hset^{n + 1}}^{n + 1}\dif \mu_{\Hset^{n + 1}}\ , 
\end{multline}
The assertion \eqref{iterateSingularEstimate} for \(q + 1\) follows now from \eqref{ineqIterateSingularEstimateNew} and \eqref{ineqIterateSingularEstimateOld}.

Similarly, for each \(a \in \Tilde{S}_{q + 1}\), by proposition~\ref{propositionOneBall} \eqref{equationLebesgueBoundHomogeneousExtension}, we have 
\begin{multline}
\label{ineqIterateRegularEstimateNew}
  \int_{\Bset^{\Hset^{n + 1}}_{2 \rho}(a) \setminus \bigcup_{b \in S_{q + 1}} \Bset^{\Hset^{n + 1}}_\delta (b)} \abs{D U_{q + 1}}_{\Hset^{n + 1}}^{n + 1}\dif \mu_{\Hset^{n + 1}}
 \hspace*{\stretch{1}}\\
  \le \kappa
  \int_{\Bset^{\Hset^{n + 1}}_{2 \rho}(a) \setminus \bigcup_{b \in S_{q}} \Bset^{\Hset^{n + 1}}_\delta (b)} \abs{D U_q}_{\Hset^{n + 1}}^{n + 1}\dif \mu_{\Hset^{n + 1}}\ ,
\end{multline}
moreover by construction \(U_{q + 1} = U_q\) on \(\Hset^{n + 1} \setminus \bigcup_{a \in \Tilde{S}_q}\Bset^{\Hset^{n + 1}}_{2\rho} (a)\) and by proposition~\ref{propositionOneBall} \eqref{eqPreservationSingularities}
\[
 \Hset^{n + 1} \setminus \bigcup_{a \in \Tilde{S}_{q + 1}} \Bset^{\Hset^{n + 1}}_{2\rho} (a) \setminus \bigcup_{b \in S_{q + 1}} \Bset^{\Hset^{n + 1}}_{\delta} (b)
 =\Hset^{n + 1} \setminus \bigcup_{a \in \Tilde{S}_{q + 1}} \Bset^{\Hset^{n + 1}}_{2\rho} (a) \setminus \bigcup_{b \in S_{q}} \Bset^{\Hset^{n + 1}}_{\delta} (b)\ ,
\]
so that 
\begin{multline}
\label{ineqIterateRegularEstimateOld}
  \int_{ \Hset^{n + 1} \setminus \bigcup_{a \in \Tilde{S}_{q + 1}} \Bset^{\Hset^{n + 1}}_{2\rho} (a) \setminus \bigcup_{b \in S_{q + 1}} \Bset^{\Hset^{n + 1}}_{\delta} (b)} \abs{D U_{q + 1}}_{\Hset^{n + 1}}^{n + 1}\dif \mu_{\Hset^{n + 1}}
 \hspace*{\stretch{1}}\\
  =
  \int_{\Hset^{n + 1} \setminus \bigcup_{a \in \Tilde{S}_{q + 1}} \Bset^{\Hset^{n + 1}}_{2\rho} (a) \setminus \bigcup_{b \in S_{q}} \Bset^{\Hset^{n + 1}}_{\delta} (b)} \abs{D U_q}_{\Hset^{n + 1}}^{n + 1}\dif \mu_{\Hset^{n + 1}}\ .
\end{multline}
The estimates \eqref{ineqIterateRegularEstimateNew} and \eqref{ineqIterateSingularEstimateOld} together with our induction assumption \eqref{iterateRegularEstimate} imply that \eqref{iterateRegularEstimate} holds for \(q + 1\).

Finally, since the set \(\Tilde{S}_q\) is finite, 
the maps \(U_{q + 1}\) and \(U_q\) coincide outside a compact subset of \(\Hset^{n + 1}\), 
and thus share the same trace on \(\Sset^{n}\).

\step{Step 5. Projection and Euclidean estimates.}
By assumption, 
\[
 (\mathfrak{H}u)^{-1} (N) \supseteq \Hset^{n + 1} \setminus \Bset^{\Hset^{n + 1}}_{\Bar{\rho}} (0) \supseteq \Hset^{n + 1} \setminus \bigcup_{b \in \Tilde{S}} \Bset^{\Hset^{n + 1}}_{\rho} (b)\ .
\]
Therefore by \eqref{iterateGoodRegion}, we have \(U_{Q} \in N\) almost everywhere in \(\Hset^{n + 1}\).
Therefore, we can set \(U = \pi_M \circ U_Q\). 
Since \(\pi_M\) is a retraction on \(N\), we have \(U \in M\) almost everywhere in \(\Hset^{n + 1}\).

Since the retraction \(\pi_M\) is Lipschitz, \(U \in W^{1, 1}_\mathrm{loc} (\Hset^{n + 1}; \Rset^\nu)\).
We have for each \(a \in S_Q\) and \(x \in \Bset^{\Hset^{n + 1}}_{\rho} (a)\),
\[
 \frac{1 - \abs{x}^2}{1 - \abs{a}^2}
 =\frac{\bigl(\cosh \frac{d_{\Hset^{n + 1}} (a, 0)}{2}\bigr)^2}{\bigl(\cosh \frac{d_{\Hset^{n + 1}} (x, 0)}{2}\bigr)^2}\ .
\]
Since \(\abs{d_{\Hset^{n + 1}} (a, 0) - d_{\Hset^{n + 1}} (x, 0)} \le \delta\), we have thus 
\begin{equation}
\label{eqHyperbolicCloseIsometry}
  e^{-\delta} \le \frac{1 - \abs{x}^2}{1 - \abs{a}^2} \le e^{\delta}\ .
\end{equation}
Therefore, we have for \(\lambda > 0\), in view of \eqref{eqHyperbolicMeasure}, \eqref{eqHyperbolicDualNorm} and \eqref{eqHyperbolicCloseIsometry},
\begin{equation}
\label{ineqWeakEuclideanSingular}
\begin{split}
 \lambda^{n + 1} &\bigabs{\bigl\{ x \in \Bset^{\Hset^{n + 1}}_{\delta} (a) \st \abs{D U (x)} > \lambda \bigr\}}\\
 &\le \Bigl(\frac{\lambda 2 e^{\delta}}{1 - \abs{a}^2}\Bigr)^{n + 1} \mu_{\Hset^{n + 1}} \bigl(\bigl\{ x \in \Bset^{\Hset^{n + 1}}_{\delta} (a) \st 2e^{\delta}\abs{D U (x)}_{\Hset^{n + 1}} > (1 - \abs{a}^2)\lambda \bigr\}\bigr)\\
 &\le e^{2(n + 1)\delta} \norm{D U}_{L^{n + 1, \infty} (\Bset^{\Hset^{n + 1}}_{\delta} (a))}^{n + 1}\ .
\end{split}
\end{equation}
On the other hand, by the classical Chebyshev inequality, we have by \eqref{eqHyperbolicMeasure} and \eqref{eqHyperbolicDualNorm},
\begin{equation}
\label{ineqWeakEuclideanRegular}
\begin{split}
\lambda^{n + 1}
\Bigabs{\Bigl\{\Hset^{n + 1} \setminus \bigcup_{a \in S_q} \Bset^{\Hset^{n + 1}}_{\delta} (a)
\st \abs{D U(x)} > \lambda \Bigr\}}\hspace{-8em}&\\
  &\le\int_{\Hset^{n + 1} \setminus \bigcup_{a \in S_q} \Bset^{\Hset^{n + 1}}_{\delta} (a)} \abs{D U}^{n + 1} \\
  &= \int_{\Hset^{n + 1} \setminus \bigcup_{a \in S_q} \Bset^{\Hset^{n + 1}}_{\delta} (a)} \abs{D U}_{\Hset^{n + 1}}^{n + 1}\dif \mu_{\Hset^{n + 1}}\ .
\end{split}
\end{equation}

By \eqref{ineqWeakEuclideanSingular}, \eqref{ineqWeakEuclideanRegular}, \eqref{iterateRegularEstimate} and \eqref{iterateSingularEstimate}, we also deduce that 
\begin{equation}
\label{ineqAlmostFinalEstimate}
\begin{split}
\lambda^{n + 1} &\bigabs{\bigl\{x \in \Bset^{n + 1} \st \abs{D U (x)} > \lambda \bigr\}}\\
 &\le C_2 e^{2(n + 1)\delta} \Bigl(\kappa^Q + \eta \frac{\kappa^{Q - 1} - 1}{\kappa - 1}\Bigr)
 \Bigl(\int_{\Sset^n} \int_{\Sset^n} \frac{\abs{u (y) - u (z)}^{n + 1}}{\abs{y - z}^{2 n}} \dif y \dif z \Bigr)^\frac{n}{n + 1}\\
 &\le C_2 e^{2(n + 1)\delta} \bigl(\kappa^Q + Q \kappa^{Q - 1}\bigr)
 \Bigl(\int_{\Sset^n} \int_{\Sset^n} \frac{\abs{u (y) - u (z)}^{n + 1}}{\abs{y - z}^{2 n}} \dif y \dif z \Bigr)^\frac{n}{n + 1}\ ,
\end{split}
\end{equation}
where the constant \(C_2\) comes from the Lipschitz constant of the retraction \(\pi_M\) and 
the estimate on the hyperharmonic extension of proposition~\ref{propositionConformalExtension} \eqref{estimateConformalExtension}.
This implies that \(U \in W^{1, p} (\Bset^{n + 1} \setminus \Bset^{\Hset^{n + 1}}_{\Bar{\rho} + \rho}, M)\) for every \(p \in [1, n + 1)\) and that \(U = u\) on \(\Sset^n\) in the sense of traces.

It remains to estimate the constants in \eqref{ineqAlmostFinalEstimate}.
In view of \eqref{ineqQ}, \eqref{ineqdelta} and \eqref{equationkappagamma},
we have 
\begin{align*}
  Q &\le C_3 e^{4 \rho}\ ,&
  \delta &\ge C_4 \frac{\rho}{e^{3 \rho}}\ ,\\
  \eta &\le C_5 \frac{e^{2 \rho}}{\rho}\ ,&
  \kappa &\le C_6 \frac{e^{5\rho}}{\rho^2}\ ,
\end{align*}
from which it follows by \eqref{eqDefinitionRho} and \eqref{ineqAlmostFinalEstimate} that 
\[
 \norm{D U}^{n + 1}_{L^{n + 1, \infty}}
 \le C \exp \Bigl(C \exp \Bigl(C \int_{\Sset^n} \int_{\Sset^n} \frac{\abs{u (y) - u (z)}^{n + 1}}{\abs{y - z}^{2 n}} \dif y \dif z\Bigr) \Bigr)\ ,
\]
for some constant \(C > 0\).
\end{proof}

\begin{remark}
\label{remarkSmooth}
In general the map \(U\) which is constructed is continuous in \(\Bset^{n} \setminus \Tilde{S}\), 
where the set \(\Tilde{S}\) constructed in the proof of theorem~\ref{theoremMain} is \emph{finite}. By a standard additional regularization argument it is possible to take \(U\) to be smooth in \(\Bset^{n} \setminus \Tilde{S}\).
If moreover \(u \in C^{\infty} (\Sset^n; M)\), then \(U\) can be taken to be smooth on \(\overline{\Bset^{n + 1}} \setminus \Tilde{S}\).
\end{remark}

\begin{remark}
\label{remarkGromovSchoen}
The proof of theorem~\ref{theoremMain} only requires from \(M\) that there is a Lipschitz retraction from a uniform neighborhood \(N\) of the set \(M\) to \(M\).
In particular, theorem~\ref{theoremMain} holds thus for compact singular spaces studied in \cite{GromovSchoen1992}.
\end{remark}

\section{Extension with constant regularity} 
\label{sectionConstantRegularity}

As a consequence of theorem~\ref{theoremMain} and of the Sobolev embedding \(W^{1,n} (\Sset^{n + 1}; \Rset^\nu) \subset W^{\frac{n}{n + 1}, n + 1} (\Sset^{n + 1}; \Rset^\nu)\), we have the following generalization of the controlled critical extension result of \cite{PetracheRiviere2015}:

\begin{theorem}
\label{theoremConstantRegularity}
Let \(n \in \Nset_*\) and let \(M \subset \Rset^{\nu}\) be a compact embedded manifold.
There exists a function \(\gamma : [0, \infty) \to [0, \infty)\) such that for every \(u \in W^{1, n} (\Sset^n; M)\), there exists \(U \in W^{1, (n + 1, \infty)} (\Bset^{n + 1}; M)\) such that
we have \(u = U\) in the sense of traces on \(\mathbb S^n\) and for every \(\lambda > 0\),
\[
\lambda^{n + 1} \bigabs{\bigl\{ x \in \Bset^{n + 1} \st \abs{D U (x)} > \lambda\bigr\}} 
  \le \gamma \Bigl(\int_{\Sset^n}\abs{D u}^n \Bigr)\ .
\]
\end{theorem}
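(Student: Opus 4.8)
The plan is to deduce theorem~\ref{theoremConstantRegularity} from theorem~\ref{theoremMain} by a single application of the classical Sobolev embedding $W^{1, n} (\Sset^n; \Rset^\nu) \subset W^{n/(n + 1), n + 1} (\Sset^n; \Rset^\nu)$, that is, of the embedding of $W^{1, n}$ data into the trace space that already appears in theorem~\ref{theoremMain}. No new geometric or analytic ingredient is needed: the whole point is that $\int_{\Sset^n} \abs{D u}^n$ controls the Gagliardo energy $\int_{\Sset^n} \int_{\Sset^n} \frac{\abs{u (y) - u (z)}^{n + 1}}{\abs{y - z}^{2 n}} \dif y \dif z$ that governs the conclusion of theorem~\ref{theoremMain}.

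First I would establish a homogeneous Gagliardo estimate: there is a constant $C$ depending only on $n$ such that for every $v \in W^{1, n} (\Sset^n; \Rset^\nu)$,
\[
 \int_{\Sset^n}\int_{\Sset^n} \frac{\abs{v (y) - v (z)}^{n + 1}}{\abs{y - z}^{2 n}} \dif y \dif z
 \le C \Bigl( \int_{\Sset^n} \abs{D v}^{n} \Bigr)^{\frac{n + 1}{n}}\ .
\]
The left-hand side is exactly $\seminorm{v}_{W^{n/(n + 1), n + 1}}^{n + 1}$, and since $1 - n/n = n/(n + 1) - n/(n + 1) = 0$ the spaces $W^{1, n} (\Sset^n)$ and $W^{n/(n + 1), n + 1} (\Sset^n)$ lie on the same line of the Sobolev scale, so that the Sobolev embedding on the compact manifold $\Sset^n$ yields $\norm{v}_{W^{n/(n + 1), n + 1}} \le C' \norm{v}_{W^{1, n}}$. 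To reach the \emph{homogeneous} form I would apply this to $v - \bar{v}$, where $\bar{v} = \frac{1}{\abs{\Sset^n}} \int_{\Sset^n} v$, use that the Gagliardo seminorm is unchanged by adding a constant, and invoke the Poincar\'e--Wirtinger inequality $\norm{v - \bar{v}}_{L^n (\Sset^n)} \le C'' \norm{D v}_{L^n (\Sset^n)}$; this gives $\seminorm{v}_{W^{n/(n + 1), n + 1}} = \seminorm{v - \bar{v}}_{W^{n/(n + 1), n + 1}} \le C' \norm{v - \bar{v}}_{W^{1, n}} \le C''' \norm{D v}_{L^n (\Sset^n)}$, and raising to the power $n + 1$ gives the claim. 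Since $M$ is embedded in $\Rset^\nu$, the inequality applies verbatim to maps $u \in W^{1, n} (\Sset^n; M)$.

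Next I would simply feed this into theorem~\ref{theoremMain}. Let $\gamma_0$ be the function supplied by that theorem for the manifold $M$; after replacing $\gamma_0$ by $t \mapsto \sup_{[0, t]} \gamma_0$ we may assume it nondecreasing (this is costless, as $\gamma_0$ may be taken of double-exponential type). Given $u \in W^{1, n} (\Sset^n; M)$, theorem~\ref{theoremMain} produces a map $U \in W^{1, (n + 1, \infty)} (\Bset^{n + 1}; M)$ with $u = U$ on $\Sset^n$ in the sense of traces and satisfying, for every $\lambda > 0$,
\[
 \lambda^{n + 1} \bigabs{\bigl\{ x \in \Bset^{n + 1} \st \abs{D U (x)} > \lambda \bigr\}}
 \le \gamma_0 \Bigl(\int_{\Sset^n}\int_{\Sset^n} \frac{\abs{u (y) - u (z)}^{n + 1}}{\abs{y - z}^{2 n}} \dif y \dif z\Bigr)\ .
\]
Combining this with the homogeneous estimate of the previous step and with the monotonicity of $\gamma_0$, the theorem follows with $\gamma (t) = \gamma_0 \bigl( C\, t^{(n + 1)/n}\bigr)$, a function depending only on $n$ and $M$.

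The argument is essentially a formal reduction, and the only point requiring care is the passage to the homogeneous form of the embedding in the second step: the bare Sobolev embedding on $\Sset^n$ bounds the full $W^{n/(n + 1), n + 1}$-norm by the full $W^{1, n}$-norm, so that $\norm{u}_{L^{n + 1}}$ would otherwise appear on the right-hand side, and it is the subtraction of the mean together with Poincar\'e--Wirtinger that eliminates it. Alternatively, one could retain $\norm{u}_{L^\infty} \le \sup_{p \in M} \abs{p}$ as a harmless extra quantity and absorb it into $\gamma$, which is already permitted to depend on $M$; one could even re-run the construction from the proof of theorem~\ref{theoremMain} directly on $W^{1, n}$ data, presumably obtaining a better $\gamma$, but this is not needed for the statement above.
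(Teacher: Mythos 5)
Your reduction is exactly the one the paper itself uses for theorem~\ref{theoremConstantRegularity}: apply the critical Sobolev embedding \(W^{1,n}(\Sset^{n};\Rset^\nu)\subset W^{n/(n+1),\,n+1}(\Sset^{n};\Rset^\nu)\) (both spaces lying on the same Sobolev line \(s-n/p=0\)) and then invoke theorem~\ref{theoremMain}, absorbing the resulting power of the energy into a nondecreasing \(\gamma\). The only thing you spell out beyond the paper's one-line argument is the homogenization via Poincar\'e--Wirtinger (or, as you note, bounded range of \(M\)), which is a harmless and correct bookkeeping step; the paper additionally records, but does not need, a direct proof along the lines of \S\ref{sectionMainProof} via proposition~\ref{lemmaGoodRadius}, which you also mention as an optional refinement.
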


Theorem~\ref{theoremConstantRegularity} can also be proved directly, following the lines of the proof of theorem~\ref{theoremMain} in \S \ref{sectionMainProof} above.
The analytic properties of the hyperharmonic extension of proposition~\ref{propositionConformalExtension} can be proved directly with a \(W^{1, n}\) bound.
We give here a direct proof of the counterpart of proposition~\ref{propositionGoodRadiusFractional} for \(W^{1, n}(\Sset^{n};\Rset^\nu)\), which has a somewhat shorter proof.

\begin{proposition}
\label{lemmaGoodRadius}
There exists \(C > 0\) depending only on the dimension such that for every \(\rho > 0\) 
and for every \(a\in\Hset\) we have
\[
 \int_0^\rho  \norm{\dist (\mathfrak{H}u, u (\Sset^n))}_{L^\infty (\partial\Bset^{\Hset^{n + 1}}_r(a))}\dif r
 \le C \left(\int_{\Sset^{n}} \abs{D u}^n \right)^\frac{1}{n} \rho^{1 - \frac{1}{n}}\ .
\]
\end{proposition}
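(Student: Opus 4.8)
The plan is to follow the proof of proposition~\ref{propositionGoodRadiusFractional}, with the double Gagliardo integral of \(u\) replaced throughout by the single Dirichlet integral of \(D u\); write \(\Lambda = \bigl(\int_{\Sset^n}\abs{D u}^n\bigr)^{1/n}\). By the M\"obius covariance of the hyperharmonic extension and the conformal invariance of the \(n\)--energy \(\int_{\Sset^n}\abs{D u}^n\) on \(\Sset^n\) (the \(n\)--Dirichlet integral is conformally invariant in dimension \(n\)), I would first reduce, exactly as in proposition~\ref{propositionGoodRadiusFractional}, to the case \(a = 0\). When \(\rho \le 1\) the estimate is immediate from the uniform bound \(\dist(\mathfrak{H}u(x), u(\Sset^n)) \le C\Lambda\): by M\"obius covariance this equals \(\dist\bigl(\mathfrak{H}(u\circ T_x^{-1})(0), (u\circ T_x^{-1})(\Sset^n)\bigr)\), which is controlled by the oscillation of \(u\circ T_x^{-1}\) on \(\Sset^n\), hence by \(\Lambda\) through the Poincar\'e inequality on \(\Sset^n\); integrating gives \(\int_0^\rho\norm{\dist(\mathfrak{H}u, u(\Sset^n))}_{L^\infty(\partial\Bset^{\Hset^{n+1}}_r)}\dif r \le C\Lambda\rho \le C\Lambda\rho^{1-1/n}\). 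So it remains to treat \(\rho \ge 1\).

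The heart of the argument is a pointwise estimate of the form
\[
 \dist\bigl(\mathfrak{H}u(x), u(\Sset^n)\bigr) \le C\int_{\Sset^n}\abs{D u(w)}\,K\bigl(\abs{x}, \abs{w - x/\abs{x}}\bigr)\dif w,
\]
with \(K(R, \cdot)\) a nonincreasing kernel that decays like a power of its second argument and encodes the averaging at scale \(1 - R\) effected by the Poisson-type kernel of \(\mathfrak{H}\). To get it I would, as in proposition~\ref{propositionGoodRadiusFractional}, start from \(\dist(\mathfrak{H}u(x),u(\Sset^n)) \le \abs{\Sset^n}^{-2}\int_{\Sset^n}\int_{\Sset^n}\abs{u(T_x^{-1}(y)) - u(T_x^{-1}(z))}\dif y\dif z\), change variables to expose the kernel \((1-\abs{x}^2)^{2n}\abs{x-y}^{-2n}\abs{x-z}^{-2n}\), bound \(\abs{u(\eta) - u(\zeta)}\) by a localized Riesz potential (Sobolev) estimate involving \(\abs{D u(w)}\bigl(\abs{w-\eta}^{1-n} + \abs{w-\zeta}^{1-n}\bigr)\) restricted to a tube joining \(\eta\) and \(\zeta\), and then integrate out \(\eta\) and \(\zeta\) against the kernel by Fubini. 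Keeping the tube localization is essential: discarding it and using the plain Poincar\'e--Sobolev inequality on spherical caps disregards the Poisson smoothing and only yields the weaker bound \(C\Lambda\rho\).

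With this pointwise bound the remainder proceeds as in proposition~\ref{propositionGoodRadiusFractional}: apply the Hardy--Littlewood rearrangement inequality to replace \(\abs{D u}\) by its symmetric decreasing rearrangement about a fixed point \(e \in \Sset^n\) --- legitimate since \(K(R, \cdot)\) is nonincreasing, and this also absorbs the supremum over \(\partial\Bset_r^{\Hset^{n+1}}\) --- then integrate in \(r\) after the substitution \(\abs{x} = \tanh\tfrac{r}{2}\), use the H\"older inequality in \(r\) to convert the \(r\)--integral of \(K\) into an explicit kernel on \(\Sset^n\), and finally use the H\"older inequality on \(\Sset^n\), the \(L^n(\Sset^n)\) norm of the rearrangement being exactly \(\Lambda\). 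The leftover explicit single integral over \(\Sset^n\) is of order \(\bigl(1 + \ln\tfrac{1}{1 - (\tanh(\rho/2))^2}\bigr)^{1-1/n} = O(\rho^{1-1/n})\), which produces the stated power.

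I expect the pointwise kernel estimate to be the main obstacle: one must choose \(K\) that is simultaneously a genuine upper bound, nonincreasing in \(\abs{w - x/\abs{x}}\) so that rearrangement applies, and sharp enough beyond the scale \(1 - \abs{x}\) that the subsequent integrations close with the exponent \(1 - 1/n\) rather than the trivial exponent \(1\); as in proposition~\ref{propositionGoodRadiusFractional}, it is precisely this sharpness that keeps the constant dependent only on the dimension and not on the modulus of integrability of \(\abs{D u}^n\).
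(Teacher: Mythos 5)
Your overall architecture matches the paper's: a pointwise kernel estimate for $\dist(\mathfrak{H}u(x), u(\Sset^n))$ in terms of $\abs{Du}$, a Hardy--Littlewood rearrangement to replace $\abs{Du}$ by $\abs{Du}^*$ and absorb the supremum over spheres, integration in $r$ with the substitution $R = \tanh\frac{\rho}{2}$, and a final H\"older inequality on $\Sset^n$ producing the power $\rho^{1 - 1/n}$. The reduction to $a = 0$ by M\"obius covariance and conformal invariance of the $n$--energy on $\Sset^n$ is also correct.

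The genuine divergence is in how the pointwise kernel bound is obtained, and there your proposal is both more complicated than necessary and built on a misdiagnosis. You argue that one must bound $\abs{u(\eta) - u(\zeta)}$ by a tube-localized Riesz potential and integrate out $\eta, \zeta$ by Fubini, and you assert that ``keeping the tube localization is essential'' because the plain Poincar\'e inequality on caps would ``only yield $C\Lambda\rho$''. The paper does something simpler and does not pass through any estimate on $\abs{u(\eta) - u(\zeta)}$: starting from the bound $\dist(\mathfrak{H}u(x), u(\Sset^n)) \le \abs{\Sset^n}^{-2}\int_{\Sset^n}\int_{\Sset^n}\abs{(u\circ T_x^{-1})(y) - (u\circ T_x^{-1})(z)}\dif y\dif z$ (inequality \eqref{ineqPointwiseDistanceOscillationComposite}), one applies the ordinary $W^{1,1}$ Poincar\'e inequality on the \emph{whole} sphere to $u\circ T_x^{-1}$, giving $C_1\int_{\Sset^n}\abs{D(u\circ T_x^{-1})}$, and then undoes the conformal reparametrization by the chain rule and \eqref{eqMobiusDerivative} to arrive at
$C_1 \int_{\Sset^n} (1 - \abs{x}^2)^{n-1}\abs{Du(y)}\,\abs{y - x}^{-2(n-1)}\dif y$. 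The M\"obius change of variables is itself the Poisson smoothing: it produces a kernel concentrated at scale $1 - \abs{x}$ around $x/\abs{x}$ and decaying polynomially, and it is nonincreasing in $\abs{y - x/\abs{x}}$, so rearrangement applies. The remaining $r$--integration uses $1 - r^2 \le 2\abs{y - re}$ and $3\abs{y - re} \ge \abs{y - e} + 1 - r$ to bound $\int_0^R (1-r^2)^{n-2}\abs{y-re}^{-2(n-1)}\dif r$ by $C(\abs{y - e} + 1 - R)^{-(n-1)}$, and H\"older then closes with $\bigl(\ln\frac{1}{1-R}\bigr)^{1-1/n} \sim \rho^{1-1/n}$. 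So the ``plain Poincar\'e inequality'' does suffice once it is applied to $u\circ T_x^{-1}$ rather than to $u$ restricted to a cap --- that is the point your proposal misses, and it is what makes the paper's proof shorter and free of the delicate tube-localized pointwise Sobolev representation, which (as stated) would also need care to justify at the level of Sobolev functions before the Fubini step.
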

\begin{proof}
As in the proof of proposition~\ref{propositionGoodRadiusFractional}, we assume without loss of generality that \(a = 0\) and we set \(R = \tanh \frac{\rho}{2}\).

By \eqref{ineqPointwiseDistanceOscillationComposite} and by the Poincar\'e inequality on \(W^{1, 1} (\Sset^{n}; \Rset^\nu)\), this implies that for each \(x \in \Bset^{n + 1}\),
\[
 \frac{\dist \bigl(\mathfrak{H} u (x), u (\Sset^{n})\bigr)}{1 - \abs{x}^2} \le  \frac{C_1}{(1 - \abs{x}^2)} \int_{\Sset^n} \abs{D (u \circ T_{x}^{-1})}\ .
\]
By the chain rule and a change of variable in the integral, in view of \eqref{eqMobiusDerivative}, we obtain
\[
\begin{split}
  \int_{\Sset^n} \abs{D (u \circ T_{x}^{-1}) (y)}\dif y
  &=\int_{\Sset^n} \bigl(\abs{Du} \circ T_{x}^{-1}\bigr) \abs{D T_{x}^{-1}}\\
  &=\int_{\Sset^n} \abs{Du}\, \abs{D T_{x}}^{n - 1}
  = \int_{\Sset^n} \frac{(1 - \abs{x}^2)^{n - 1} 
  \abs{D u (y)}}{\abs{y - x}^{2 (n - 1)}}\dif y\ .
\end{split}
\]
Let \(\abs{D u}^*: \Sset^n \to [0, \infty]\) denotes the spherical rearrangement of the function \(\abs{D u} : \Sset^n \to [0, \infty]\) with respect to a fixed point \(e \in \Sset^n\), that is, for each \(\lambda > 0\), 
\(\abs{D u}^* (y)^{-1} ((\lambda, \infty))\) is a geodesic ball of \(\Sset^n\) centered at the point \(e\) that has the same measure as \(\abs{D u} (y)^{-1} ((\lambda, \infty))\). 
By the classical Hardy--Littlewood rearrangement inequality, we have if \(r = \abs{x}\)
\[
 \frac{\dist (\mathfrak{H} u (x), u (\Sset^{n}))}{1 - r^2} 
 \le C_1 \int_{\Sset^n} \frac{(1 - r^2)^{n - 2} \abs{D u}^* (y)}{\abs{y - re}^{2 (n - 1)}}\dif y\ .
\]
Since \(1 - r^2 \le 2 \abs{y - re}\), by \eqref{ineqTriple},
we now compute for every \(e \in \Sset^{n}\) and \(y \in \Bset^{n + 1}\),
\[
\begin{split}
 \int_0^R \frac{(1 - r^2)^{n - 2}}{\abs{y - r e}^{2 (n - 1)}} \dif r
 & \le \int_0^R \frac{2^{n - 2}}{\abs{y - r e}^{n}} \dif r\\
 & \le \int_0^R \frac{2^{n - 2} 3^n}{(\abs{y - e} + 1 - r)^n} \dif r \le \frac{2^{n - 2}3^n}{n (\abs{y - e} + 1 - R)^{n - 1}}\ .
\end{split}
\]
We have thus proved that 
\[
 \int_0^R  \frac{\norm{\dist (\mathfrak{H} u, u (\Sset^n))}_{L^\infty (\partial \Bset^{n + 1}_r)}}{1 - r^2}\dif r
 \le C_1\frac{2^{n - 2} 3^n}{n}\int_{\Sset^n} \frac{\abs{D u}^* (y)}{(\abs{y - e} + 1 - R)^{n - 1}}\dif y\ .
\]
Now, we observe that
\[
\int_{\Sset^n} \frac{1}{(\abs{y - e} + 1 - R)^{n}}\dif y
  \le C_2 \ln \frac{1}{1 - R}\ .
\]
Hence, by the classical H\"older inequality, 
\[
\begin{split}
 \int_0^R  \frac{\norm{\dist (\mathfrak{H} u, u (\Sset^n))}_{L^\infty (\partial \Bset^{n + 1}_r)}}{1 - r^2}\dif r
 &\le C_1 \left(\int_{\Sset^{n}} (\abs{D u}^*)^n \right)^\frac{1}{n} \Bigl( C_2 \ln \frac{1}{1 - R} \Bigr)^{1 - \frac{1}{n}}\\
 &=C_1 \left(\int_{\Sset^{n}} \abs{D u}^n \right)^\frac{1}{n} \Bigl( C_2 \ln \frac{1}{1 - R} \Bigr)^{1 - \frac{1}{n}}\ ,
\end{split}
\]
in view of the Cavalieri principle for the rearrangement. The conclusion follows.
\end{proof}

% \begin{proposition}
% There exists \(C > 0\) such that for every \(\rho > 0\), 
% \[
%  \int_0^\rho  \frac{\norm{\dist (U, u (\Sset^n))}_{L^\infty (\partial \Bset^{n + 1}_r)}}{1 - r^2}\dif r
%  \le C \left(\int_{\Sset^{n}} \int_{\Sset^{n}} \frac{\abs{u (y) - u (z)}^{n + 1}}{\abs{x - y}^{2 n }} \right)^\frac{1}{n} \left(\int_0^\rho \frac{1}{1 - r^2}\dif r\right)^{1 - \frac{1}{n}}.
% \]
% \end{proposition}

\section{High integrability case}

In this section we explain how a controlled extension can be obtained in \(W^{1, p}\).

\begin{theorem}\label{propositionSupercritical}
Let \(n \in \Nset_*\), \(p > n + 1\) and \(M \subset \Rset^\nu\) a compact Riemannian manifold. 
There exists \(\gamma : \Rset^+ \to \Rset^+\) such that if \(u \in W^{1 - 1/p, p} (\Sset^{n}; M)\) is homotopic to a constant map as a continuous map, 
then there exists \(U \in W^{1, p} (\Bset^{n+1}; M)\) such that \(U = u\) in the sense of traces on \(\Sset^{n}\) and
\[
\int_{\Bset^{n + 1}} \abs{D U}^p
  \le \gamma \Bigl(\int_{\Sset^n}\int_{\Sset^n} \frac{\abs{u (y) - u (z)}^p}{\abs{y - z}^{n + p - 1}} \dif y \dif z \Bigr)\ .
\] 
\end{theorem}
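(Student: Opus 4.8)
The plan is to argue by compactness, as announced in the introduction (compare \cite{PetracheRiviere2015}*{proposition 2.8}). For \(u \in W^{1-1/p,p}(\Sset^n;M)\) I set \(E(u)\) to be the infimum of \(\int_{\Bset^{n+1}}\abs{DU}^p\) over all \(U \in W^{1,p}(\Bset^{n+1};M)\) whose trace on \(\Sset^n\) equals \(u\), with \(E(u) = +\infty\) if there is no such \(U\). Since \(p > n+1\), the space \(W^{1-1/p,p}(\Sset^n)\) embeds into \(C^{0,1-(n+1)/p}(\Sset^n)\), so the homotopy class of a map in \(W^{1-1/p,p}(\Sset^n;M)\) is well defined, and \(E(u) < \infty\) whenever \(u\) is homotopic to a constant \cite{BethuelDemengel1995}. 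It therefore suffices to show that
\[
 \Gamma(t) := \sup\,\bigl\{ E(u) \st u \in W^{1-1/p,p}(\Sset^n;M)\text{ homotopic to a constant},\ \seminorm{u}_{W^{1-1/p,p}}^p \leq t \bigr\}
\]
is finite for every \(t \geq 0\); the conclusion then holds with, say, \(\gamma(t) = \Gamma(t)+1\).

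To prove \(\Gamma(t_0) < \infty\) I would argue by contradiction. Suppose there are maps \(u_k \in W^{1-1/p,p}(\Sset^n;M)\), each homotopic to a constant, with \(\seminorm{u_k}_{W^{1-1/p,p}}^p \leq t_0\) and \(E(u_k) \to \infty\). As \((u_k)\) takes values in the compact set \(M\), it is bounded in \(W^{1-1/p,p}(\Sset^n;\Rset^\nu)\), hence in \(C^{0,1-(n+1)/p}(\Sset^n)\); after passing to a subsequence, \(u_k \to u\) uniformly on \(\Sset^n\). Then \(u : \Sset^n \to M\) (as \(M\) is closed), \(\seminorm{u}_{W^{1-1/p,p}}^p \leq \liminf_k \seminorm{u_k}_{W^{1-1/p,p}}^p \leq t_0\) by Fatou's lemma, and \(u\) is again homotopic to a constant: once \(\norm{u_k - u}_{L^\infty(\Sset^n)} < \iota\), where \(\pi_M\) is the nearest point projection onto \(M\) on the tubular neighbourhood \(N = \{y \in \Rset^\nu \st \dist(y,M) < \iota\}\), the map \((x,s) \mapsto \pi_M\bigl((1-s)u(x)+s\,u_k(x)\bigr)\) is a homotopy from \(u\) to \(u_k\). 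By \cite{BethuelDemengel1995} there is therefore \(V \in W^{1,p}(\Bset^{n+1};M)\) with trace \(u\) and \(\int_{\Bset^{n+1}}\abs{DV}^p = E_0 < \infty\).

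The core step is then to contradict \(E(u_k) \to \infty\) by producing, for large \(k\), an \(M\)-valued \(W^{1,p}\) extension of \(u_k\) whose energy is bounded independently of \(k\); I would obtain it by perturbing \(V\) near the boundary. Let \(h_k \in W^{1,p}(\Bset^{n+1};\Rset^\nu)\) be the harmonic extension of \(u_k - u\). By the linear trace estimate \eqref{eqLinearControl}, \(\int_{\Bset^{n+1}}\abs{Dh_k}^p \leq C\,\seminorm{u_k - u}_{W^{1-1/p,p}}^p \leq C\,2^{p}\,t_0\), while the maximum principle gives \(\norm{h_k}_{L^\infty(\Bset^{n+1})} \leq \norm{u_k - u}_{L^\infty(\Sset^n)} \to 0\). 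Fixing \(\Lambda \in C^\infty(\overline{\Bset^{n+1}})\) with \(0 \leq \Lambda \leq 1\), \(\Lambda \equiv 0\) on \(\Bset^{n+1}_{1/2}\) and \(\Lambda \equiv 1\) near \(\Sset^n\), I set \(U_k = \pi_M(V + \Lambda\,h_k)\). For \(k\) large, \(\norm{h_k}_{L^\infty} < \iota\), so \(\dist(V + \Lambda h_k, M) \leq \abs{\Lambda h_k} < \iota\) almost everywhere (using \(V \in M\) a.e.); hence \(V + \Lambda h_k\) takes values in \(N\), \(\pi_M\) is Lipschitz there, and \(U_k \in W^{1,p}(\Bset^{n+1};M)\), with trace on \(\Sset^n\) equal to \(\pi_M\bigl(u + (u_k-u)\bigr) = \pi_M(u_k) = u_k\). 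Finally, with \(L\) the Lipschitz constant of \(\pi_M\),
\[
 \int_{\Bset^{n+1}}\abs{DU_k}^p \leq L^p\int_{\Bset^{n+1}}\abs{DV + \Lambda\,Dh_k + h_k \otimes D\Lambda}^p \leq 3^{p-1}L^p\Bigl(E_0 + C\,2^{p}\,t_0 + \norm{D\Lambda}_{L^\infty}^p\,\norm{h_k}_{L^\infty}^p\,\abs{\Bset^{n+1}}\Bigr)\ ,
\]
which is bounded uniformly in \(k\). Since \(E(u_k) \leq \int_{\Bset^{n+1}}\abs{DU_k}^p\), this contradicts \(E(u_k) \to \infty\), proving \(\Gamma(t_0) < \infty\) and hence the theorem.

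I expect the crux to be the passage to the limit, which is exactly where \(p > n+1\) is needed: the compact embedding into \(C^0\) is what keeps the homotopy class fixed along the subsequence (so the limit \(u\) is still null-homotopic, and thus extendable with finite energy) and what makes the correction \(\Lambda h_k\) uniformly small, so that composing with \(\pi_M\) keeps the competitor inside \(M\); once that is in place, \eqref{eqLinearControl} controls the derivative of the correction and the energy estimate is a routine product rule. The argument only uses that \(M\) admits a Lipschitz retraction from a uniform neighbourhood, so it applies in the generality of remark~\ref{remarkGromovSchoen} as well.
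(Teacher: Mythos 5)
Your argument is correct, and it does share the key analytic ingredients with the paper's proof (Morrey embedding from \(p>n+1\) to get compactness in \(C^0\), the Lipschitz nearest-point retraction \(\pi_M\) on a tubular neighbourhood, and a gluing of the harmonic extension with a reference extension near the boundary), but the organisation is genuinely different. You argue by contradiction: extract a uniformly convergent subsequence \(u_k \to u\), extend the \emph{limit} \(u\) once and for all (via \cite{BethuelDemengel1995}), and then \emph{add} a cut-off harmonic extension of the small difference \(u_k-u\) to this fixed extension before projecting. The paper instead proceeds directly: it uses total boundedness of \(\mathcal B_\theta\) in \(C^0\) to fix a finite \(\iota\)-net of \emph{smooth} maps \(u_1,\dots,u_Q\) homotopic to a constant, chooses smooth radial reference extensions \(U_q\), and \emph{interpolates} between the hyperharmonic extension \(\mathfrak{H}u\) (accurate near \(\Sset^n\) by the Morrey bound \(\dist(\mathfrak{H}u(x),u(x/|x|))\le C\theta^{1/p}(1-|x|)^{1-(n+1)/p}\)) and the nearby reference \(U_q\), at a scale \(\rho\) chosen depending on \(\theta\). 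What you gain with the contradiction route is that you never need to choose a transition scale quantitatively — the smallness of \(\|h_k\|_{L^\infty}\) is free from the subsequence convergence; what the paper's route buys is a (semi-)explicit competitor valid for every \(u\in\mathcal B_\theta\) simultaneously, with \(\gamma(\theta)\) expressed in terms of the finitely many reference energies \(\int|DU_j|^p\), rather than a proof of mere finiteness of \(\Gamma\). Your observation that the argument only uses a Lipschitz retraction from a tubular neighbourhood, so it applies in the generality of remark~\ref{remarkGromovSchoen}, is also consistent with the paper's setup.
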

\begin{proof}
Let \(\theta \in (0, \infty)\).
Since \(p > n + 1\) and the manifold \(M\) is compact, by the fractional Sobolev--Morrey embedding and the classical Ascoli--Arzel\`a compactness theorem, the set 
\begin{multline*}
 \mathcal{B}_{\theta} = \Bigl\{ u \in W^{1 - \frac{1}{p}, p} (\Bset^{n + 1}; M) \st \int_{\Sset^n}\int_{\Sset^n} \frac{\abs{u (y) - u (z)}^p}{\abs{y - z}^{n + p - 1}} \dif y \dif z \le \theta\\
 \text{and } u \text{ is homotopic to a constant map}\Bigr\}\ ,
\end{multline*}
is a compact subset of \(C (\Sset^{n}; M)\) with respect to the uniform distance.

There exists \(\iota > 0\) such that a Lipschitz retraction \(\pi_M : N \to M\) is well-defined on 
\[
 N = \bigl\{ y \in \Rset^\nu \st \dist (y, M) < \iota \bigr\}\ .
\]
Since \(C^\infty (\Sset^{n}; M)\) is dense in \(C (\Cset^{n}; M)\) and since \(\mathcal{B}_{\theta}\) is compact with respect to the uniform metric, 
there exists maps \(u_1, \dotsc, u_Q \in C^\infty (\Sset^n; M)\) such that for every \(u \in \mathcal{B}_{\theta}\), 
there exists \(q \in \{1, \dotsc, Q\}\) such that \(\norm{u - u_q}_{L^\infty} < \iota\). 
Since maps in \(\mathcal{B}_{\theta}\) are homotopic to a constant, 
we can assume without loss of generality that \(u_1, \dotsc, u_Q\) are homotopic to a constant. 
(A map that would not be homotopic to a constant would not satisfy \(\norm{u - u_q}_{L^\infty} < \iota\) for any \(u \in \mathcal{B}_\theta\).)
We let \(U_1, \dotsc, U_Q \in C^\infty (\Bset^{n + 1}; M)\) be smooth extensions of \(u_1, \dotsc, u_Q\) 
such that \(U_q (x) = u_q (x/\abs{x})\) if \(\abs{x} \ge 1/2\).

We fix now \(u \in \mathcal{B}_{\theta}\) and let \(q\) be given above.
We take \(\mathfrak{H}u\) to be given by a smooth mollification at the scale of the distance to the boundary, 
given for example by \eqref{equationConformalExtension}.
By the Morrey--Sobolev embedding, we have 
\[
 \dist (\mathfrak{H}u (x), u (x/\abs{x}))
 \le C_1 \theta^{\frac{1}{p}} (1 - \abs{x})^{1 - \frac{n + 1}{p}}\ .
\]
We choose \(\rho \in (1/2, 1)\) such that 
\[
 C_1 \theta^{\frac{1}{p}} (1 - \rho)^{1 - \frac{n + 1}{p}} < \iota\ ,
\]
we define the function \(\eta : \Bset^{n + 1} \to \Rset\) for \(x \in \Bset^{n + 1}\) by 
\(
 \eta (x) = ((1 - \abs{x})/(1 - \rho))_+\)
and we set
\(
 U
 = \bigl(\eta \mathfrak{H}u + (1-\eta) U_q\bigr)\ .
\)
We observe that \(U \in W^{1, p} (\Bset^{n + 1}; M)\). Moreover
\[
\begin{split}
  \int_{\Bset^{n + 1}} \abs{D U}^p
  \le C_2 \Bigl(\int_{\Bset^{n + 1}} \abs{D \mathfrak{H} u}^p + \int_{\Bset^{n + 1}} \abs{D  U_q}^p\Bigr)\\
  \le C_2 \Bigl(C_3 \theta + \max_{j \in \{1, \dotsc, Q\}} \int_{\Bset^{n + 1}} \abs{DU_j}^p\Bigr)\ .\qedhere
\end{split}
\]
\end{proof}

\section*{Acknowledgement}

The authors thank Augusto Ponce for fruitful discussions about the problem.

\begin{bibdiv}

\begin{biblist}

\bib{Ahlfors1981}{book}{
   author={Ahlfors, Lars V.},
   title={M\"obius transformations in several dimensions},
   series={Ordway Professorship Lectures in Mathematics},
   publisher={University of Minnesota School of Mathematics},
   place={Minneapolis, Minn.},
   date={1981},
   pages={ii+150},
}

\bib{Bethuel1991}{article}{
   author={Bethuel, Fabrice},
   title={The approximation problem for Sobolev maps between two manifolds},
   journal={Acta Math.},
   volume={167},
   date={1991},
   number={3-4},
   pages={153--206},
   issn={0001-5962},
%    review={\MR{1120602 (92f:58023)}},
%    doi={10.1007/BF02392449},
}

\bib{Bethuel2014}{article}{
   author={Bethuel, Fabrice},
   title={A new obstruction to the extension problem for Sobolev maps
   between manifolds},
   journal={J. Fixed Point Theory Appl.},
   volume={15},
   date={2014},
   number={1},
   pages={155--183},
   issn={1661-7738},
}

\bib{BethuelDemengel1995}{article}{
   author={Bethuel, F.},
   author={Demengel, F.},
   title={Extensions for Sobolev mappings between manifolds},
   journal={Calc. Var. Partial Differential Equations},
   volume={3},
   date={1995},
   number={4},
   pages={475--491},
   issn={0944-2669},
}

\bib{BourgainBrezisMironescu2005}{article}{
   author={Bourgain, Jean},
   author={Brezis, Ha{\"{\i}}m},
   author={Mironescu, Petru},
   title={Lifting, degree, and distributional Jacobian revisited},
   journal={Comm. Pure Appl. Math.},
   volume={58},
   date={2005},
   number={4},
   pages={529--551},
   issn={0010-3640},
%    review={\MR{2119868 (2005k:26028)}},
%    doi={10.1002/cpa.20063},
}

\bib{BrezisMironescu2015}{article}{
   author={Brezis, Ha{\"{\i}}m},
   author={Mironescu, Petru},
   title={Density in \(W^{s,p}(\Omega;N)\)},
   journal={J. Funct. Anal.},
   volume={269},
   date={2015},
   number={7},
   pages={2045--2109},
   issn={0022-1236},
%    review={\MR{3378869}},
%    doi={10.1016/j.jfa.2015.04.005},
}

\bib{BrezisNirenberg1995}{article}{
   author={Brezis, H.},
   author={Nirenberg, L.},
   title={Degree theory and BMO. I. Compact manifolds without boundaries},
   journal={Selecta Math. (N.S.)},
   volume={1},
   date={1995},
   number={2},
   pages={197--263},
   issn={1022-1824},
}

\bib{CroweZweibelRosenbloom1986}{article}{
   author={Crowe, J. A.},
   author={Zweibel, J. A.},
   author={Rosenbloom, P. C.},
   title={Rearrangements of functions},
   journal={J. Funct. Anal.},
   volume={66},
   date={1986},
   number={3},
   pages={432--438},
   issn={0022-1236},
}

\bib{DiBenedetto2002}{book}{
   author={DiBenedetto, Emmanuele},
   title={Real analysis},
   series={Birkh\"auser Advanced Texts: Basler Lehrb\"ucher},
   publisher={Birkh\"auser},
   address={Boston, Mass.},
   date={2002},
   pages={xxiv+485},
   isbn={0-8176-4231-5},
%    review={\MR{1897317 (2003d:00001)}},
%    doi={10.1007/978-1-4612-0117-5},
}

\bib{GromovSchoen1992}{article}{
   author={Gromov, Mikhail},
   author={Schoen, Richard},
   title={Harmonic maps into singular spaces and \(p\)-adic superrigidity for
   lattices in groups of rank one},
   journal={Inst. Hautes \'Etudes Sci. Publ. Math.},
   number={76},
   date={1992},
   pages={165--246},
   issn={0073-8301},
%    review={\MR{1215595 (94e:58032)}},
}

\bib{HardtLin1987}{article}{
   author={Hardt, Robert},
   author={Lin, Fang-Hua},
   title={Mappings minimizing the \(L^p\) norm of the gradient},
   journal={Comm. Pure Appl. Math.},
   volume={40},
   date={1987},
   number={5},
   pages={555--588},
   issn={0010-3640},
}

\bib{Isobe2003}{article}{
   author={Isobe, Takeshi},
   title={Obstructions to the extension problem of Sobolev mappings},
   journal={Topol. Methods Nonlinear Anal.},
   volume={21},
   date={2003},
   number={2},
   pages={345--368},
   issn={1230-3429},
%    review={\MR{1998435 (2005c:58018)}},
}
        
\bib{LiebLoss}{book}{
  author={Lieb, Elliott H.},
  author={Loss, Michael},
  title={Analysis},
  series={Graduate Studies in Mathematics},
  volume={14},
  edition={2},
  publisher={American Mathematical Society},
  place={Providence, RI},
  date={2001},
  pages={xxii+346},
  isbn={0-8218-2783-9},
}

\bib{mazya}{book}{
    AUTHOR = {Maz\cprime{}ya, Vladimir Gilelevich},
     TITLE = {Sobolev spaces with applications to elliptic partial
              differential equations},
    SERIES = {Grundlehren der Mathematischen Wissenschaften},
    VOLUME = {342},
%    EDITION = {augmented},
 PUBLISHER = {Springer},
     address={Heidelberg},
      YEAR = {2011},
     PAGES = {xxviii+866},
      ISBN = {978-3-642-15563-5},
       DOI = {10.1007/978-3-642-15564-2},
       URL = {http://dx.doi.org/10.1007/978-3-642-15564-2},
}

\bib{Mostow1968}{article}{
   author={Mostow, G. D.},
   title={Quasi-conformal mappings in \(n\)-space and the rigidity of
   hyperbolic space forms},
   journal={Inst. Hautes \'Etudes Sci. Publ. Math.},
   number={34},
   date={1968},
   pages={53--104},
   issn={0073-8301},
%    review={\MR{0236383 (38 \#4679)}},
}
\bib{Nash1956}{article}{
   author={Nash, John},
   title={The imbedding problem for Riemannian manifolds},
   journal={Ann. of Math. (2)},
   volume={63},
   date={1956},
   pages={20--63},
   issn={0003-486X},
}

\bib{Nicolesco1936}{article}{
    author = {Nicolesco, M.},
    title = {Expos\'es sur la th\'eorie des fonctions},
    partial={
    part={IV},
    subtitle={Les fonctions polyharmoniques},
    date = {1936},
    journal={Actual. sci. industr.},
    volume={331},
    },
}

\bib{PetracheRiviere2015}{article}{
   author={Petrache, Mircea},
   author={Rivi{\`e}re, Tristan},
   title={Global gauges and global extensions in optimal spaces},
   journal={Anal. PDE},
   volume={7},
   date={2014},
   number={8},
   pages={1851--1899},
   issn={2157-5045},
%    review={\MR{3318742}},
%    doi={10.2140/apde.2014.7.1851},
}

\bib{SchoenUhlenbeck}{article}{
   author={Schoen, Richard},
   author={Uhlenbeck, Karen},
   title={Boundary regularity and the Dirichlet problem for harmonic maps},
   journal={J. Differential Geom.},
   volume={18},
   date={1983},
   number={2},
   pages={253--268},
   issn={0022-040X},
%    review={\MR{710054 (85b:58037)}},
}

\bib{SchulzeWildenhain1977}{book}{
   author={Schulze, Bert-Wolfgang},
   author={Wildenhain, G{\"u}nther},
   title={Methoden der Potentialtheorie f\"ur elliptische
   Differentialgleichungen beliebiger Ordnung},
   series={Lehrb\"ucher und Monographien aus dem Gebiete der Exakten
   Wissenschaften: Mathematische Reihe},
   number = {60},
   publisher={Birkh\"auser},
   address={Basel--Stuttgart},
   date={1977},
   pages={xv+408},
   isbn={3-7643-0944-X},
}

\bib{Stein1970}{book}{
   author={Stein, Elias M.},
   title={Singular integrals and differentiability properties of functions},
   series={Princeton Mathematical Series}, 
   volume={30},
   publisher={Princeton University Press},
   place={Princeton, N.J.},
   date={1970},
}

\end{biblist}

\end{bibdiv}

\end{document}